\DeclareSymbolFontAlphabet{\mathbb}{AMSb}
\DeclareSymbolFontAlphabet{\mathbbl}{bbold}
\newcommand{\prism}{{\mathlarger{\mathbbl{\Delta}}}}
\newcommand{\prismp}{{\prism'}}
\newcommand{\prismpp}{{\prism''}}
\newcommand{\BA}{{\mathbb{A}}}
\newcommand{\BP}{{\mathbb{P}}}
\newcommand{\BG}{{\mathbb{G}}}
\DeclareMathOperator{\Grpds}{{Grpds}}
\DeclareMathOperator{\Sm}{{Sm}}
\DeclareMathOperator{\KS}{{KS}}
\DeclareMathOperator{\pCurv}{{p-Curv}}
\DeclareMathOperator{\exact}{{exact}}
\DeclareMathOperator{\closed}{{closed}}
\DeclareMathOperator{\Ob}{{Ob}}
\DeclareMathOperator{\Sch}{{Sch}}
\DeclareMathOperator{\Stacks}{{Stacks}}
\DeclareMathOperator{\Tot}{{Tot}}
\newcommand{\AAut}{\underline{\on{Aut}}}
\DeclareMathOperator{\Shim}{{\bf Shim}}
\DeclareMathOperator{\ad}{{ad}}
\newcommand{\cA}{{\mathcal A}}
\newcommand{\cB}{{\mathcal B}}
\newcommand{\cE}{{\mathcal E}}
\newcommand{\cF}{{\mathcal F}}
\newcommand{\cN}{{\mathcal N}}
\newcommand{\cO}{{\mathcal O}}
\newcommand{\cT}{{\mathcal T}}
\newcommand{\sR}{{\mathscr R}}
\newcommand{\sX}{{\mathscr X}}
\newcommand{\sY}{{\mathscr Y}}
\newcommand{\fa}{{\mathfrak a}}
\newcommand{\fg}{{\mathfrak g}}
\newcommand{\fh}{{\mathfrak h}}
\newcommand{\nc}{\newcommand}
\nc\wh{\widehat}
\nc\on{\operatorname}
\nc\Gr{\on{Gr}}
\nc\Fl{\on{Fl}}
\newtheorem{cor}[subsubsection]{Corollary}
\newtheorem{lem}[subsubsection]{Lemma}
\newtheorem{prop}[subsubsection]{Proposition}
\newtheorem{conj}[subsubsection]{Conjecture}
\newtheorem{thm}[subsubsection]{Theorem}
\theoremstyle{remark}
\newtheorem{rem}[subsubsection]{Remark}
\newcommand{\BF}{{\mathbb{F}}}
\newcommand{\BN}{{\mathbb{N}}}
\newcommand{\BZ}{{\mathbb{Z}}}
\DeclareMathOperator{\Lie}{{Lie}}
 \DeclareMathOperator{\Spf}{{Spf}}
\DeclareMathOperator{\Cone}{{Cone}}
\DeclareMathOperator{\BT}{{BT}}
\DeclareMathOperator{\Disp}{{Disp}}
\DeclareMathOperator{\Syn}{{Syn}}
\DeclareMathOperator{\syn}{{syn}}
\DeclareMathOperator{\Lau}{{Lau}}
\newcommand{\limto}{{\displaystyle\lim_{\longrightarrow}}}
\newcommand{\rightlim}{\mathop{\limto}}
\newcommand{\leftlim}{\mathop{\displaystyle\lim_{\longleftarrow}}}
\newcommand{\limfromn}{\leftlim\limits_{\raise3pt\hbox{$n$}}}
\newcommand{\limton}{\rightlim\limits_{\raise3pt\hbox{$n$}}}
\newcommand{\rightlimit}[1]{\mathop{\lim\limits_{\longrightarrow}}\limits%
                    _{\raise3pt\hbox{$\scriptstyle #1$}}}
\newcommand{\leftlimit}[1]{\mathop{\lim\limits_{\longleftarrow}}\limits%
                    _{\raise3pt\hbox{$\scriptstyle #1$}}}
\newcommand{\epi}{\twoheadrightarrow}
\newcommand{\iso}{\buildrel{\sim}\over{\longrightarrow}}
\newcommand{\mono}{\hookrightarrow}
\DeclareMathOperator{\dR}{{dR}}
\DeclareMathOperator{\Hodge}{{Hodge}}
\DeclareMathOperator{\et}{{et}} 
\DeclareMathOperator{\fppf}{{fppf}} 
\DeclareMathOperator{\Hom}{{Hom}}
\DeclareMathOperator{\Ker}{{Ker}} \DeclareMathOperator{\id}{{id}}
\DeclareMathOperator{\im}{{Im}}
\DeclareMathOperator{\Mor}{{Mor}}
 \DeclareMathOperator{\op}{{op}}
\DeclareMathOperator{\Spec}{{Spec}}
\DeclareMathOperator{\WCart}{{WCart}}
\theoremstyle{definition}
\newtheorem{ex}[subsubsection]{Example}
\numberwithin{equation}{section}
\newcommand{\Fr}{\operatorname{Fr}}
\begin{document}
\title[Shimurian generalizations of $\BT_1\otimes\BF_p$]{On Shimurian generalizations of the stack $\BT_1\otimes\BF_p$}
\author{Vladimir Drinfeld}
\address{University of Chicago, Department of Mathematics, Chicago, IL 60637}

\begin{abstract}
Let $G$ be a smooth group scheme over $\BF_p$ equipped with a $\BG_m$-action such that all weights of $\BG_m$ on $\Lie (G)$ are $\le 1$.
 Let $\Disp_n^G$ be Eike Lau's stack of $n$-truncated $G$-displays (this is an algebraic $\BF_p$-stack).  
 In the case $n=1$ we introduce an algebraic stack equipped with a morphism to $\Disp_1^G$. We conjecture that if $G=GL(d)$ then the new stack is  canonically isomorphic to the reduction modulo $p$ of the stack of $1$-truncated Barsotti-Tate groups of height $d$ and  dimension $d'$, where $d'$ depends on the action of $\BG_m$ on $GL(d)$.
 
We also discuss how to define an analog of the new stack for $n>1$ and how to replace $\BF_p$ by $\BZ/p^m\BZ$.
\end{abstract}

\keywords{Barsotti-Tate group, Shimura variety, display, $F$-zip, connection, $p$-curvature, syntomification}
\subjclass[2010]{14F30}

\maketitle

\section{Introduction}
Once and for all, we fix a prime $p$. The words ``algebraic stack'' are understood in the sense of \cite{LM} unless stated otherwise.
 
\subsection{The goal}    \label{ss:The goal}
The notion of $n$-truncated Barsotti-Tate group was introduced by Grothen\-dieck \cite{Gr}. It is reviewed in \cite{Me72,Il,dJ}.
$n$-truncated Barsotti-Tate groups of height $d$ and dimension $d'\le d$ form an algebraic stack over $\BZ$, denoted by $\BT_n^{d,d'}$.
This stack is smooth by a deep theorem of Grothendieck, whose proof is given in Illusie's article~\cite{Il}.

The stack $\BT_n^{d,d'}$ is related to the group $G=GL(d)$; e.g., 
$\BT_n^{d,d'}\otimes\BZ [p^{-1}]$ is the classifying stack of $GL(d, \BZ/p^n\BZ)$.
Since there exist Shimura varieties of type $E_7$, it is natural to expect that the stacks $\BT_n^{d,d'}\otimes \BZ/p^m\BZ$ (and maybe the stacks $\BT_n^{d,d'}$ themselves) have interesting $E_7$-analogs.

In the main body of this article we focus on the case $n=m=1$. In this case we give a rather elementary definition of the stack $\BT_{1,\BF_p}^G=\BT_1^G\otimes\BF_p$, where $G$ is any smooth affine group scheme over $\BF_p$ equipped with a 1-bounded $\BG_m$-action (1-boundedness means that the weights of $\BG_m$ on $\Lie (G)$ are $\le 1$). Note that a semisimple group of type $E_7$ can be equipped with a nontrivial 1-bounded $\BG_m$-action.
In Appendix~\ref{s:BT_n^G via syntomification} we suggest a definition of $\BT_{n,\BZ/p^m\BZ}^G$ for arbitrary $n$ and $m$, which uses the prismatic theory.\footnote{In the case $m=1$ one only needs the crystalline theory, and in the case $n=m=1$ the de Rham theory is enough. This is why in the case $m=n=1$ an elementary approach is possible.}

We prove that the stack $\BT_{1,\BF_p}^G$ defined in \S\ref{ss:the key definition} is algebraic (this is not an immediate consequence of the definition). 
We conjecture that in the case $G=GL(d)$ one has $\BT_{1,\BF_p}^G=\BT_1^{d,d'}\otimes\BF_p$, where $d'$ depends on the action of $\BG_m$ on $GL(d)$. The precise formulations of this result and conjecture are given in \S\ref{ss:main theorems}-\ref{ss:conjecture for G=GL(d)}.

\subsection{Structure of the article}
In \S\ref{s:Shim_n} we recall some facts about group schemes equipped with a $\BG_m$-action. We also introduce a (2,1)-category $\Shim_n$, whose objects are smooth affine group schemes over $\BZ/p^n\BZ$ equipped with a $1$-bounded $\BG_m$-action. 

In \S\ref{s:Disp} we recall a certain $\BF_p$-stack $\Disp_1^G$, $G\in\Shim_1$, which is called the stack of 1-truncated displays (or the stack of $F$-zips).
We also introduce in \S\ref{ss:Lau_1} a commutative group scheme over $\Disp_1^G$ denoted by $\Lau_1^G$.

In \S\ref{s:the key definition} we define an $\BF_p$-stack $\BT_{1,\BF_p}^G$, which depends functorially on $G\in\Shim_1$.
We also formulate the main theorems and a conjecture, see \S\ref{ss:main theorems}-\ref{ss:conjecture for G=GL(d)}. 

The proof of the main theorems is given in \S\ref{s:the conditions for nabla}-\ref{s:proof of main theorems}.

In Appendices~\ref{s:stacky nonsense}-\ref{s:gerbe generalities} we discuss some material on algebraic stacks.

In Appendix~\ref{s:BT_n^G via syntomification} we suggest a definition of $\BT_{n,\BZ/p^m\BZ}^G$ for arbitrary $n$ and $m$, which uses the prismatic theory. We also formulate Conjecture~\ref{conj:algebraicity}; it is motivated by Corollary~\ref{c:smooth algebraic} and by Grothendieck's smoothness theorem mentioned at the beginning of \S\ref{ss:The goal}.

In Appendix~\ref{s:comparing the definitions of BT_1} we explain why the definition of $\BT_{1,\BF_p}^G$ given in Appendix~\ref{s:BT_n^G via syntomification} is equivalent to the one from \S\ref{s:the key definition}.

\subsection{Status of the conjectures}
As far as I understand, both conjectures formulated in this article are proved in \cite{GMM}.

\subsection{Acknowledgements}
I thank Alexander Beilinson, Bhargav Bhatt, Akhil Mathew, Nicholas Katz, Martin Olsson, George Pappas, and Vadim Vologodsky for valuable discussions and references.
The author's work on this project was partially supported by NSF grant DMS-2001425.

\section{The (2,1)-category $\Shim_n$}   \label{s:Shim_n}
\subsection{1-bounded $\BG_m$-actions}   \label{ss:1-boundedness}
Let $G$ be a smooth group scheme over $\BZ/p^n\BZ$ and $\fg:=\Lie (G)$. An action of $\BG_m$ on $G$ induces its action on $\fg$
and therefore a grading 
\begin{equation}  \label{e:2eigenspaces}
\fg=\bigoplus_{i\in\BZ}\fg_i \, . 
\end{equation}
Following E.~Lau \cite{Lau21}, we say that an action of $\BG_m$ on $G$ is \emph{1-bounded} if $\fg_i=0$ for $i>1$.

\begin{ex}  \label{ex:reductive}
If $G$ is reductive and connected then an action of $\BG_m$ on $G$ is given by a cocharacter $\mu:\BG_m\to G_{\ad}$. In this case 1-boundedness implies that all weights of $\BG_m$ on $\Lie (G)$ belong to $\{-1,0,1\}$. Cocharacters $\mu$ with this property are called \emph{minuscule}\footnote{This notion of minuscule cocharacter is slightly more general than that of Bourbaki (e.g., Bourbaki requires a minuscule cocharacter to be nonzero).}.
\end{ex}

\subsection{The (2,1)-category $\Shim_n$}  \label{ss:Shim_n}
Let $\Shim_n$ be the following (2,1)-category: its objects are smooth affine group schemes over $\BZ/p^n\BZ$ equipped with a $1$-bounded $\BG_m$-action, and for $G_1,G_2\in\Ob\Shim_n$, the groupoid of morphisms $G_1\to G_2$ is the quotient groupoid\footnote{If a group $\Gamma$ acts on a set $X$ then the \emph{quotient groupoid} is defined as follows: the set of objects is $X$, a morphism $x\to x'$ is an element $\gamma\in\Gamma$ such that $\gamma x=x'$, and the composition of morphisms is given by multiplication in $G$.
} of $\Hom^{\BG_m}(G_1,G_2)$ by the conjugation action of $G_2^{\BG_m}(\BZ/p^n\BZ )$. Here $\Hom^{\BG_m}$ is the set of $\BG_m$-equivariant homomorphisms, and $G_2^{\BG_m}\subset G_2$ is the group subscheme of $\BG_m$-fixed points.

The above (2,1)-category $\Shim_n$ is not quite good from a certain viewpoint\footnote{See \S 9.1.4 of version 1 of \cite{GMM}.}, but we work with it in this article.

\subsection{The subgroups $M$, $P^\pm$, $U^\pm$}   \label{ss:parabolics,Levi}
\subsubsection{Recollections from \cite{CGP}}
Let $k$ be a ring and $G$ a smooth affine group $k$-scheme equipped with a $\BG_m$-action, i.e., a homomorphism $\mu :\BG_m\to\AAut\, G$. Let $M:=G^{\BG_m}$, i.e., $M\subset G$ is the subgroup of $\BG_m$-fixed points. Let $P^+$ be the attractor for $\mu$, i.e., $P^+\subset G$ is the maximal closed subscheme such that the action map $\BG_m\times P^+\to G$ extends to a morphism of schemes 
$\BA^1\times P^+\to G$. Restricting the latter to $\{ 0\}\times P^+\subset\BA^1\times P^+$, we get a retraction $P^+\to M\subset P^+$. Let $P^-\subset G$ be the attractor for $\mu^{-1}$; we have a retraction $P^-\to M\subset P^-$. Let $U^\pm$ be the preimage of $1$ with respect to the map $P^\pm\to M$. The next lemma is well known if $G$ is reductive and connected (in this case $P^+$ and $P^-$ are parabolics, and $M$ is a Levi).

\begin{lem}
(i) $P^\pm$ and $U^\pm$ are group subschemes of $G$. The maps $P^\pm\to M$ are group homomorphisms. One has $P^\pm=M\ltimes U^\pm$.

(ii) $P^+\cap P^-=M$.

(iii) $P^\pm$, $U^\pm$, and $M$ are smooth. In terms of the grading \eqref{e:2eigenspaces}, one has 
\[
\Lie (M)=\fg_0,\, \Lie (P^+)=\fg_{\ge 0},\, \Lie (P^-)=\fg_{\le 0}, \,\Lie (U^+)=\fg_{> 0},\, \Lie (U^-)=\fg_{< 0},
\]
where $\fg_{\ge 0}:=\bigoplus\limits_{i\ge 0}\fg_i$ and $\fg_{\le 0}, \fg_{> 0},\fg_{< 0}$ are defined similarly.
 
(iv) The multiplication map $U^-\times M\times U^+\to G$ is an open immersion.

(v) The fibers of $U^\pm$ over $\Spec k$ are connected. If the fibers of $G$ over $\Spec k$ are connected then the same holds for $P^\pm$ and $M$.
\end{lem}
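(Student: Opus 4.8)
\emph{The plan} is to deduce everything from the dynamic method for $\BG_m$-actions on affine schemes (\cite{CGP}) together with elementary group-scheme manipulations; reductivity of $G$ is never used. Fix a $k$-algebra $R$, and for $g\in G(R)$ let $\phi_g\colon\BG_m\times_k\Spec R\to G$ be the morphism $t\mapsto\mu(t)(g)$. By definition $g\in P^+(R)$ precisely when $\phi_g$ extends to a morphism $\BA^1\times\Spec R\to G$, and an extension is unique when it exists, because $\BG_m$ is schematically dense in $\BA^1$ and $G$ is separated. Since $G$ is affine, the usual descriptions of the fixed-point and attractor functors present $M$, $P^+$, $P^-$ as closed subschemes of $G$, and then $U^\pm$ is the closed subscheme $(\pi^\pm)^{-1}(1)$ of $P^\pm$, where $\pi^\pm\colon P^\pm\to M$ is the retraction of the lemma's preamble. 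Now for (i): if $g,h\in P^+(R)$ then $\phi_{gh}=\phi_g\cdot\phi_h$ (product taken in $G$), so multiplying the two extensions yields an extension of $\phi_{gh}$, which by uniqueness is \emph{the} extension; inverting the extension of $\phi_g$ likewise extends $\phi_{g^{-1}}$. Hence $P^\pm\subset G$ is a subgroup, and evaluating the resulting identity of extensions at $t=0$ shows $\pi^\pm$ is a homomorphism. As $\pi^\pm$ is then a retraction homomorphism with the tautological section $M\hookrightarrow P^\pm$, formal group theory gives that $U^\pm=\ker\pi^\pm$ is normal, $M\cap U^\pm=\{1\}$, and $P^\pm=M\ltimes U^\pm$. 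For (ii): $g\in P^+(R)\cap P^-(R)$ iff $\phi_g$ extends over $\mathbb{P}^1\times\Spec R$; since $G$ is affine and $H^0(\mathbb{P}^1_R,\cO)=R$, such an extension is constant, so $\phi_g$ is the constant map $g$, i.e.\ $g\in M(R)$; the reverse inclusion is obvious.

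\emph{For (iii)}, the one point that is not purely formal is smoothness. I would invoke the general fact (\cite{CGP}) that the fixed locus and the attractor of a smooth affine $k$-scheme equipped with a $\BG_m$-action are smooth over $k$; the proof is the infinitesimal lifting criterion combined with the linear reductivity of $\BG_m$ (one averages the obstruction over $\BG_m$). Granting this, $M$ and $P^\pm$ are smooth, and $U^\pm$ is smooth because $P^\pm\cong M\times U^\pm$ as $k$-schemes by (i). The Lie-algebra identifications then fall out of the functor of points on dual numbers: a vector $v=\sum_i v_i\in\fg\otimes R$ (graded decomposition) lies in $\Lie P^+$ iff $\mu(t)(v)=\sum_i t^i v_i$ has no negative-degree term, i.e.\ iff $v\in\fg_{\ge0}\otimes R$; symmetrically $\Lie P^-=\fg_{\le0}$ and $\Lie M=\fg_0$, whence $\Lie U^\pm=\ker(\Lie P^\pm\to\fg_0)$ is $\fg_{>0}$, resp.\ $\fg_{<0}$.

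\emph{For (iv)}, the multiplication map $\nu\colon U^-\times M\times U^+\to G$ is a monomorphism: if $u_1^-m_1u_1^+=u_2^-m_2u_2^+$ in $G(R)$, then $(u_2^-)^{-1}u_1^-m_1=m_2u_2^+(u_1^+)^{-1}$ lies in $P^-(R)\cap P^+(R)=M(R)$ by (ii), which forces $(u_2^-)^{-1}u_1^-\in U^-\cap M=\{1\}$, hence $u_1^-=u_2^-$, and then applying $\pi^+$ gives $m_1=m_2$ and $u_1^+=u_2^+$. And $\nu$ is étale: by (iii) its source and target are smooth over $k$, and after identifying tangent spaces by left translation the differential of $\nu$ at $(u^-,m,u^+)$ is $(X,Y,Z)\mapsto\Ad((u^+)^{-1}m^{-1})X+\Ad((u^+)^{-1})Y+Z$ on $\fg_{<0}\oplus\fg_0\oplus\fg_{>0}$; this is everywhere an isomorphism because $\Ad(U^+)$ and $\Ad(U^-)$ are unipotent with respect to the filtrations by the $\fg_{\ge i}$ and by the $\fg_{\le i}$ (acting trivially on the associated graded) while $\Ad(M)$ preserves the grading, so the differential is block-triangular with identity diagonal. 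An étale monomorphism is an open immersion, giving (iv). For (v): over any field $U^\pm$ is connected, since it is filtered by $\BG_m$-stable normal subgroups with vector-group successive quotients; hence $U^\pm\to\Spec k$ has connected fibers. If moreover the fibers of $G$ are connected, then so are those of $M$ — this is the fiberwise statement that the centralizer of $\BG_m$ in a connected smooth affine group is connected (\cite{CGP}) — and therefore so are those of $P^\pm=M\ltimes U^\pm$.

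\emph{The main obstacle} is the smoothness input in (iii) — equivalently, the smoothness of attractors and of $M=G^{\BG_m}$ — which genuinely uses that $\BG_m$ is linearly reductive; I would not reprove it but cite \cite{CGP}. Everything else is bookkeeping with the functor of points and the semidirect-product structure.
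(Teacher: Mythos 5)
Your argument is correct, and it runs on the same engine as the paper's, namely the dynamic method of \cite{CGP}; the difference is in how much is delegated. The paper's entire proof is a one-line reduction: apply \cite[Prop.~2.1.8]{CGP} to the semidirect product $\BG_m\ltimes G$, which turns the (possibly non-inner) action $\mu:\BG_m\to\AAut G$ into conjugation by a cocharacter of $\BG_m\ltimes G$, after which all five assertions -- including the open-cell statement (iv) and the connectedness statements (v) -- are literally items of that proposition. You instead redo the formal bookkeeping directly for the non-inner action (group structure of $P^\pm$, the $\mathbb{P}^1$-argument for $P^+\cap P^-=M$, the Lie-algebra identifications on dual numbers, the \'etale-monomorphism argument for (iv)) and quote \cite{CGP} only for the genuinely non-formal inputs: smoothness of fixed loci and attractors, and connectedness. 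This buys a more self-contained and instructive proof; what the paper's trick buys is that the two spots where your citations or computations are slightly off become automatic. First, ``the centralizer of $\BG_m$ in a connected smooth affine group'' is only literally $M$ when the action is inner; for a general $\mu$ one must pass to $\BG_m\ltimes G$ and use $Z_{\BG_m\ltimes G}(\BG_m)=\BG_m\times M$ -- exactly the paper's reduction -- to quote that result (alternatively, connect $u\in U^\pm(\bar k)$ to $1$ by the extended orbit map $\BA^1\to U^\pm$, which also reproves connectedness of $U^\pm$ without invoking a filtration). Second, in your differential computation the diagonal block on $\fg_{<0}$ is $\Ad(m^{-1})$, not the identity; it preserves the grading and is invertible on each graded piece, so the block-triangularity argument and the conclusion stand, but ``identity diagonal'' should read ``invertible diagonal''. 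Neither point is a genuine gap.
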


\begin{proof}
Apply the results from \cite[p.~48-56]{CGP} (especially  \cite[Prop.~2.1.8]{CGP}) to the semidirect product $\BG_m\ltimes G$.
\end{proof}

\subsubsection{The 1-bounded case}   \label{sss:U^+=g_1}
Now assume that $\fg_i=0$ for $i>1$. Then $\fg_1$ is abelian. Moreover, by  \cite[Lemma 6.3.2]{Lau21},
there is a unique $\BG_m$-equivariant isomorphism of group schemes
\begin{equation}   \label{e:U^+=g_1}
f:U^+\iso\fg_1
\end{equation}
such that $\Lie (f)=\id_{\fg_1}$. If the ring $k$ is local then the projective $k$-module $\fg_1$ is free, so the isomorphism \eqref{e:U^+=g_1} implies that $U^+$ is 
isomorphic to a product of several copies of $\BG_a$.

If $k$ is an $\BF_p$-algebra then $\fg$ is equipped with a $p$-operation $x\mapsto x^{(p)}$. By \eqref{e:U^+=g_1}, if $\fg$ is 1-bounded then
\begin{equation}  \label{e:zero p-operation}
x^{(p)}=0 \mbox{ for } x\in\fg_1\, .
\end{equation} 
Another way to prove \eqref{e:zero p-operation} is to note that more generally, the $p$-operation takes $\fg_i$ to $\fg_{pi}$.
This follows from the fact that for any $k$-algebra $\tilde k$ and any $a\in\tilde k^\times$ the action of $a$ on $\fg\otimes_k\tilde k$ preserves the structure of restricted Lie algebra on $\fg\otimes_k\tilde k$.

\section{Recollections on the stacks $\Disp_1^G$} \label{s:Disp}
\subsection{Plan of this section}
For any $n\in\BN$ and a smooth group scheme $G$ over $\BZ/p^n\BZ$ equipped with a $\BG_m$-action, one has the \emph{$\BF_p$-stack of $n$-truncated displays} $\Disp_n^G$ defined\footnote{We are talking about the particular case of a general construction of \cite{Lau21} corresponding to the ``$n$-truncated Witt frame'' in the sense of \cite[Example~2.1.6]{Lau21}.}
in~\cite{Lau21}. 

In \S\ref{ss:Disp} we recall the stack $\Disp_1^G$. Assuming that $G\in\Shim_1$, we define in \S\ref{ss:Lau_1} a commutative group scheme on $\Disp_1^G$; we denote it by $\Lau_1^G$ (because it is conjecturally related to E.~Lau's work \cite{Lau13}, see \S\ref{ss:conjecture for G=GL(d)} below). 

The stack $\BT_{1,\BF_p}^G$, which will be defined in \S\ref{s:the key definition} assuming that $G\in\Shim_1$, will be equipped with a map to $\Disp_1^G$, which makes it into a gerbe over $\Disp_1^G$ banded by $\Lau_1^G$.

Let us make some historical remarks related to $\Disp_1^G$. First, as noted in Example~3.7.5 of \cite{Lau21}, $\Disp_1^G$ is the same as the stack of \emph{$F$-zips} in the sense of \cite{MW, PWZ}.
Second, the projective limit of $\Disp_n^G$ is the reduction modulo $p$ of a stack over $\Spf\BZ_p$, which was defined in \cite{BP}  (at least, assuming that $G$ is reductive and the cocharacter $\mu :\BG_m\to G_{\ad}$ is minuscule) following the ideas of Thomas Zink. The articles \cite{MW, PWZ, BP} preceded \cite{Lau21}.

\subsection{The $\BF_p$-stack $\Disp_1^G$}    \label{ss:Disp}
\subsubsection{Definition of the stack $\Disp_1^G$}  \label{sss:Disp definition}
Let $G$ be a smooth group scheme over $\BF_p$ equipped with a $\BG_m$-action. 
Let $M$ and $P^\pm$ be as in \S\ref{ss:parabolics,Levi}. Let $S$ be an $\BF_p$-scheme. Then $\Disp_1^G (S)$ is the groupoid of the following data:
 
 (i) a $P^\pm$-torsor $\cF^\pm$ on $S$;
 
 (ii) an isomorphism between the $G$-torsors corresponding to $\cF^+$ and $\cF^-$;
  
 (iii) an isomorphism $\cF^+_M\iso\Fr_S^*\cF^-_M$, where $\cF^\pm_M$ is the $M$-torsor corresponding to $\cF^\pm$.
 
\subsubsection{$\Disp_1^G$ as a quotient stack}  \label{sss:Disp_1^G as a quotient}
 As noted in \cite{PWZ}, the stack $\Disp_1^G$ has an explicit realization as a quotient, which shows that $\Disp_1^G$ is a quasi-compact smooth alegbraic stack over $\BF_p$ of pure dimension~$0$ with affine diagonal. To get this realization, note that the combination of data (i) and (iii) is the same as a principal $K$-bundle $\cE\to S$, where $K\subset G\times G$ is the following subgroup:
\begin{equation}    \label{e:definition of K}
K:=\{ (g,h)\in P_+\times P_-\,|\, g_M=\Fr (h_M)\},
\end{equation}
(here $g_M ,h_M$ are the images of $g,h$ in $M$). Moreover, data (ii) is the same as a $K$-equivariant morphism $\cE\to G$, where $(g,h)\in K$ acts on $G$ by 
\begin{equation}   \label{e:action of K on G}
x\mapsto hxg^{-1}.
\end{equation}
Thus $\Disp_1^G$ identifies with the quotient of the scheme $G$ by the action of $K$ given by \eqref{e:action of K on G}.

\subsubsection{The generic locus of $\Disp_1^G$} \label{sss:generic locus}
By \eqref{e:definition of K}, the stabilizer of $1\in G$ under the $K$-action given by \eqref{e:action of K on G} equals the finite group $M(\BF_p)$. Moreover, the $K$-orbit of $1$ is open.

By \S\ref{sss:Disp_1^G as a quotient}, $\Disp_1^G$ is a quotient of the scheme $G$. The composite morphism 
\begin{equation}   \label{e:generic point}
\Spec\BF_p\overset{1}\longrightarrow G\epi\Disp_1^G
\end{equation}
will be called the \emph{generic point} of the stack $\Disp_1^G$. The group scheme of automorphisms of this point equals $M(\BF_p)$, and the image of \eqref{e:generic point} is an open substack of $\Disp_1^G$, which identifies with the classifying stack of $M(\BF_p)$. This open substack will be called the \emph{generic locus} of $\Disp_1^G$ and denoted by $\Disp_{1, \rm gen}^G$. The word ``generic'' is justified at least if $G$ is connected (which implies that $\Disp_1^G$ is irreducible).

\subsubsection{Motivation}   \label{sss:de Rham cohomology}
A pair $(\cF^+,\cF^-)$ equipped with data (ii) from \S\ref{sss:Disp definition} is the same as a $G$-torsor $\cF$ equipped with a $P^+$-structure and a $P^-$-structure. If $G=GL(d)$ this means that the vector bundle $\xi$ corresponding to $\cF$ is equipped with two filtrations. As explained at the beginning of \cite{MW}, the typical example is when $\xi$ is the $m$-th relative de Rham cohomology of a smooth proper scheme $X$ over $S$ satisfying certain conditions\footnote{The conditions for the smooth proper morphism $f:X\to S$ are as follows:
the sheaves $R^bf_*\Omega^a_{X/S}$ should be locally free, and the Hodge-de Rham spectral sequence for $f$ should degenerate at the $E_1$ page.}.
In this situation the $P^-$-structure corresponds to the Hodge filration, and the $P^+$-structure corresponds to the conjugate one.

Note that the Gauss-Manin connection on $\xi$ is not included into the definition of $\Disp_1^G$. However, it is included into the definition of $\BT_{1,\BF_p}^G$, which will be given in \S\ref{ss:the key definition}.

\subsubsection{Reductive case}   \label{sss:Reductive case}
If $G$ is reductive it is known that the set of points of $\Disp_1^G$ is finite; moreover, Theorem~1.6 of \cite{PWZ} gives a complete description of this set (together with the topology on it),  and Theorem~1.7 of \cite{PWZ} describes the automorphism group scheme of each point.

\subsection{The group scheme $\Lau_1^G$}   \label{ss:Lau_1}
\subsubsection{Definition of $\Lau_1^G$}     \label{sss:Lau_1}
From now on, we assume\footnote{Without this assumption, the definition given below formally makes sense (but is probably useless) if one replaces $\fg_1$ by $\fg_{>0}\,$.} that $G\in\Shim_1$ (i.e., the action of $\BG_m$ on $G$ is 1-bounded).
Given an $\BF_p$-scheme $S$ and an $S$-point of $\Disp_1^G$, we will construct a group scheme over $S$. Let us use the notation of
\S\ref{sss:Disp definition}. Note that both $P^+$ and $P^-$ act on $\fg_1$ via $M$; we have a $P^+$-equivariant monomorphism $\fg_1\mono\fg$ and a
 $P^-$-equivariant epimorphism $\fg\epi\fg/\fg_{\le 0}=\fg_1$. Let $(\fg_1)_{\cF^\pm}$ be the vector bundle on $S$ corresponding to $\cF^\pm$ and the $P^\pm$-module $\fg_1$.
 We have the diagram
 \begin{equation}    \label{e:p-operation2}
 \Fr_S^*(\fg_1)_{\cF^-}\iso (\fg_1)_{\cF^+}\mono \fg_{\cF^+}\iso\fg_{\cF^-}\epi (\fg_1)_{\cF^-}
 \end{equation}
 in which the first isomorphism comes from data (iii) of \S\ref{sss:Disp definition} and the second one from~(ii). Consider $(\fg_1)_{\cF^-}$ as a commutative restricted Lie $\cO_S$-algebra with $p$-operation\footnote{This $p$-operation is \emph{different} from the one induced by the $p$-operation on $\fg_1$; the latter is zero by formula~\eqref{e:zero p-operation}.} \eqref{e:p-operation2}. This restricted Lie $\cO_S$-algebra yields a finite locally free group $S$-scheme of height 1 in the usual way (see \cite[Exp.VIIA]{SGA3} or \cite[\S 2]{dJ}). We denote it by $\Lau_{1,S}^G$. The formation of $\Lau_{1,S}^G$ commutes with base change $S'\to S$. As $S$ varies, the group schemes $\Lau_{1,S}^G$ define a commutative finite flat group scheme over $\Disp_1^G$ of height $1$, which we denote by $\Lau_1^G$.
 
 The stack $\BT_{1,\BF_p}^G$, which will be defined in \S\ref{s:the key definition}, will turn out to be a gerbe over $\Disp_1^G$ banded by $\Lau_1^G$, see Theorem~\ref{t:2}.

\subsubsection{Restriction of $\Lau_1^G$ to the generic locus of $\Disp_1^G$}  \label{sss:Lau_1^G on generic locus}
Let $\Lau_{1, \rm gen}^G$ be the fiber of $\Lau_1^G$ over the generic point of $\Disp_1^G$ in the sense of \S\ref{sss:generic locus}, 
so $\Lau_{1, \rm gen}^G$ is a finite group scheme over $\BF_p$ equipped with an action of $M(\BF_p)$. Then one has a canonical $M(\BF_p)$-equivariant isomorphism
\begin{equation}   \label{e:generic fiber of Lau_n^true}
\Lau_{1, \rm gen}^G\iso\fg_1\otimes_{\BF_p}\mu_p
\end{equation}
(indeed, in the case of the generic point of $\Disp_1^G$ the torsors $\cF^\pm$ are canonically trivial, and the composite map \eqref{e:p-operation2} is $\id_{\fg_1}$).
Less canonically, $\Lau_{1, \rm gen}^G$ is a direct sum of $\dim\fg_1$ copies of $\mu_p$.

 \section{Definition of $\BT_{1,\BF_p}^G$}   \label{s:the key definition}
\subsection{General remarks on smooth algebraic stacks}  \label{ss:smooth algebraic stacks}
\subsubsection{An easy lemma}   \label{sss:easy lemma}
Let $\Sch_{\BF_p}$ be the category of $\BF_p$-schemes and $\Sm_{\BF_p}$ the full subcategory of smooth $\BF_p$-schemes; unless specified otherwise, we will specify each of these categories with the etale topology. Let $\Grpds$ be the (2,1)-category of groupoids. 

Let $\sX$ be an $\BF_p$-stack, i.e., a functor $\Sch_{\BF_p}^{\op}\to\Grpds$ satisfying the sheaf property. Then the restriction of $\sX:\Sch_{\BF_p}^{\op}\to\Grpds$ to $\Sm_{\BF_p}^{\op}$ will be denoted by $\sX_{\Sm}$. The following variant of Yoneda's lemma is easy and well known.

\begin{lem}   \label{l:easy lemma}
The functor $\sX\mapsto\sX_{\Sm}$ becomes fully faithful when restricted to the (2.1)-category of {\bf smooth} algebraic stacks over $\BF_p$.
\end{lem}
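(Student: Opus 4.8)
The plan is to prove this by reducing to the classical Yoneda lemma together with the fact that a smooth algebraic stack over $\BF_p$ is determined, as a stack on all $\BF_p$-schemes, by its restriction to smooth $\BF_p$-schemes. Concretely, let $\sX$ and $\sY$ be smooth algebraic stacks over $\BF_p$. I must show that the restriction functor
\[
\Hom(\sX,\sY)\longrightarrow\Hom(\sX_{\Sm},\sY_{\Sm})
\]
is an equivalence of groupoids, where the Hom's are taken in the respective $(2,1)$-categories of stacks on $\Sch_{\BF_p}$ (resp.\ on $\Sm_{\BF_p}$) with the \'etale topology. The key input is that for any algebraic stack $\sY$ which is smooth over $\BF_p$, and any $\BF_p$-scheme $T$, the groupoid $\sY(T)$ is recovered from the restriction $\sY_{\Sm}$ by \emph{right Kan extension}: that is, the canonical map $\sY(T)\to\lim_{(U\to T)}\sY(U)$, where $U$ ranges over smooth $\BF_p$-schemes mapping to $T$, is an equivalence. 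This is because a smooth algebraic stack admits a smooth surjection $U_0\to\sY$ from a smooth $\BF_p$-scheme $U_0$, and the \v Cech nerve of this map, together with its base changes, provides enough smooth schemes over any $T$ to reconstruct $\sY(T)$ by descent.

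First I would set up the right Kan extension statement precisely. Given a smooth presentation $U_0\to\sY$ with $U_1:=U_0\times_\sY U_0$, etc., the simplicial scheme $U_\bullet$ has all terms smooth over $\BF_p$ (since $U_0$ is smooth over $\BF_p$ and the $U_n$ are smooth over $U_0$). For any $\BF_p$-scheme $T$, an object of $\sY(T)$ is by descent the same as a morphism $T\to\sY$, which \'etale-locally on $T$ lifts to $U_0$ — but for the Kan-extension formulation I instead use that $\sY(T)=\colim_{[n]}\,(\text{maps }T\to U_n)$ in an appropriate homotopy-colimit sense, dualized: $\sY=\colim_{\Delta^{\op}} U_\bullet$ as a stack, and since each $U_n$ is smooth, the value $\sY(T)$ is computed from the values $U_n(T)$, which in turn, by ordinary Yoneda on schemes, are determined by $(U_n)_{\Sm}$. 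The cleanest packaging: the functor $\sY\mapsto\sY_{\Sm}$ from smooth algebraic $\BF_p$-stacks to stacks on $\Sm_{\BF_p}$ has the property that $\sY(T)\simeq (R\mathrm{an}\,\sY_{\Sm})(T)$ for all $T\in\Sch_{\BF_p}$, where $R\mathrm{an}$ denotes right Kan extension along $\Sm_{\BF_p}\hookrightarrow\Sch_{\BF_p}$; equivalently $\sY_{\Sm}(U)\simeq\sY(U)$ for $U$ smooth (tautological) and $\sY$ is the right Kan extension of its own restriction.

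Granting that, full faithfulness is formal: for smooth algebraic stacks $\sX,\sY$,
\[
\Hom(\sX_{\Sm},\sY_{\Sm})\;\simeq\;\Hom(\sX_{\Sm}, (\sY)|_{\Sm})\;\simeq\;\Hom(\sX, R\mathrm{an}\,\sY_{\Sm})\;\simeq\;\Hom(\sX,\sY),
\]
where the middle step is the adjunction (restriction $\dashv$ right Kan extension) between stacks on $\Sm_{\BF_p}$ and stacks on $\Sch_{\BF_p}$, and the last step uses the reconstruction $\sY\simeq R\mathrm{an}\,\sY_{\Sm}$. One should be slightly careful that the adjunction is between \emph{stacks} (\'etale sheaves of groupoids), not just presheaves, so I would note that right Kan extension of an \'etale stack along $\Sm_{\BF_p}\hookrightarrow\Sch_{\BF_p}$ is still an \'etale stack — this holds because \'etale covers can be refined by covers that are smooth schemes when the base is a smooth scheme, and more generally because the inclusion is continuous and cocontinuous for the \'etale topologies, so it induces a morphism of topoi whose pushforward is computed by the naive right Kan extension.

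The main obstacle is the reconstruction step: showing that a smooth algebraic stack $\sY$ over $\BF_p$ satisfies $\sY(T)\simeq (R\mathrm{an}\,\sY_{\Sm})(T)$ for \emph{arbitrary}, possibly very non-smooth, $\BF_p$-schemes $T$ (e.g.\ non-reduced or non-Noetherian ones). The point is that even for such $T$, every morphism $T\to\sY$ can be analyzed \'etale-locally via the smooth chart $U_0\to\sY$: \'etale-locally on $T$ one gets a lift $T\to U_0$ with $U_0$ smooth over $\BF_p$, and descent along the smooth (hence, \'etale-locally, the corresponding) cover reconstructs $\sY(T)$ from maps into the smooth schemes $U_\bullet$; since maps from $T$ into a smooth scheme are detected by the functor of points restricted to smooth test objects only up to the right Kan extension, one must check that ordinary schemes — including non-smooth ones — are themselves right Kan extended from smooth schemes along the \'etale topology, i.e.\ that $T(-)$ is an \'etale sheaf already determined by its restriction to $\Sm_{\BF_p}$. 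This last assertion is the crux and follows because any scheme admits an \'etale (indeed Zariski, by affine opens which are \'etale-locally smooth... no — rather one uses that $\BF_p$ is perfect so every affine $\BF_p$-scheme is a filtered limit, hence a cover is subtler): the honest argument is that $T$, being a scheme, is an \'etale sheaf, and smooth $\BF_p$-schemes (e.g.\ affine spaces and their \'etale opens) generate the \'etale topos of $\BF_p$ as a site — so $T\simeq R\mathrm{an}(T|_{\Sm})$ by general topos theory (the inclusion of a generating subcategory induces an equivalence of topoi). I would spell this out as the key lemma, deduce the scheme case, then bootstrap to algebraic stacks via a smooth presentation, and finally run the formal adjunction argument above.
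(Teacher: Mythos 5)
There is a genuine gap, and it sits exactly at the step you yourself identify as the crux: the claim that a smooth algebraic stack $\sY$ (or even a smooth scheme, or an arbitrary scheme) is recovered from $\sY_{\Sm}$ by \emph{right} Kan extension, i.e.\ that $\sY(T)\to\lim_{(U\to T),\,U\in\Sm_{\BF_p}}\sY(U)$ is an equivalence for all $T$. This is false, because the right-hand side only sees $T_{\red}$: every morphism $U\to T$ from a smooth (hence reduced) scheme factors uniquely through $T_{\red}$, so the index category $\Sm_{\BF_p}/T$ coincides with $\Sm_{\BF_p}/T_{\red}$ and the right Kan extension takes the same value on $T$ and $T_{\red}$. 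Taking $\sY=\BA^1$ and $T=\Spec\BF_p[x]/(x^2)$ gives $\sY(T)=\BF_p[x]/(x^2)\neq\BF_p=\sY(T_{\red})$, so no such reconstruction can hold (sheafifying does not help, since the failure is already visible through the adjunction: see below). For the same reason your auxiliary claims fail: a non-smooth scheme is \emph{not} the right Kan extension of its restriction to $\Sm_{\BF_p}$, and smooth schemes do not generate the big \'etale topos of $\BF_p$ (an \'etale cover of a non-reduced scheme is non-reduced, so it cannot be refined by smooth schemes; concretely, two distinct endomorphisms of $h_T$ for $T=\Spec\BF_p[x]/(x^2)$ agree on all smooth test schemes). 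A structural sanity check: if your key lemma were true, the adjunction argument would give $\Mor(\sX,\sY)\simeq\Mor(\sX_{\Sm},\sY_{\Sm})$ for an \emph{arbitrary} stack $\sX$ as source, which is false — with $\sX=\Spec\BF_p[x]/(x^2)$ and $\sY=\BA^1$ the two sides are $\BF_p[x]/(x^2)$ and $\BF_p$. So the smoothness hypothesis must be exploited on the source, not encoded as a property of the target.

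That is what the paper does, and it is the direction your argument needs to be turned around to. Choose a smooth presentation $\pi:X\to\sX$ of the \emph{source}; all terms of its \v{C}ech nerve $X_\bullet$ are algebraic spaces smooth over $\BF_p$, and since morphisms out of a colimit form a limit, $\Mor(\sX,\sY)=\Tot(\Mor(X_\bullet,\sY))$ and $\Mor(\sX_{\Sm},\sY_{\Sm})=\Tot(\Mor((X_\bullet)_{\Sm},\sY_{\Sm}))$ for an \emph{arbitrary} stack $\sY$; iterating once more reduces to the case where $\sX$ is a smooth scheme, where the statement is ordinary Yoneda. The formal packaging of this is the (sheafified) \emph{left} Kan extension along $\Sm_{\BF_p}\hookrightarrow\Sch_{\BF_p}$: restriction has a fully faithful left adjoint, and the content of the lemma is that smooth algebraic stacks lie in its essential image — not that they are right Kan extended from their smooth restrictions, which is the statement your proof relies on and which fails already for $\BA^1$.
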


This variant of Yoneda's lemma is probably well known. For completeness, we give a proof of a more general statement in Proposition~\ref{p:stacky Yoneda} of Appendix~\ref{s:stacky nonsense}.

\begin{rem}     \label{r:sheafified Kan extension} 
One can ask how to reconstruct a smooth algebraic stack $\sX$ over $\BF_p$ in terms of $\sX_{\Sm}$. Abstract nonsense gives the following ``answer'' (see Proposition~\ref{p:sheafified Kan extension} and the sentence after it):  
$\sX$ is the sheafified left Kan extension of $\sX_{\Sm}:\Sm_{\BF_p}^{\op}\to\Grpds$ to $\Sch_{\BF_p}^{\op}$.
\end{rem}

\subsubsection{Example}   \label{sss:alpha_p-torsors}
In this article we will be describing smooth algebraic $\BF_p$-stacks by specifying their restriction to $\Sm_{\BF_p}^{\op}$. Here is a baby example.

Let $\sX$ be the classifying stack of the group scheme 
\begin{equation}   \label{e:alpha_p}
\alpha_p:=\Ker (\Fr :\BG_a\to\BG_a), \mbox{ where } \BG_a:=(\BG_a)_{\BF_p};
\end{equation}
in other words, $\sX (S)$ is the groupoid of $\alpha_p$-torsors on $S$ for the fppf topology.
The stack $\sX$ is smooth: indeed, the map
\[
\BG_a\times\BA^1\to\BA^1 , \quad (a,x)\mapsto x+a^p.
\]
defines an action of $\BG_a$ on $\BA^1$ such that the quotient stack is $\sX$. We claim that $\sX_{\Sm}$ is the following sheaf of \emph{sets} (rather than groupoids):
\[
S\mapsto H^0(S,\Omega^1_{S, \exact}),  \mbox{ where } S\in\Sm_{\BF_p} \mbox{ and } \Omega^1_{S, \exact}:=\im(\cO_{S,\et}\overset{d}\longrightarrow\Omega^1_{S,\et}).
\]
Indeed, by \eqref{e:alpha_p},  the derived direct image of $\alpha_p$ under the morphism $S_{\fppf}\to S_{\et}$ equals $\cA [-1]$, where $\cA:=\Cone (\Fr :\cO_{S,\et}\to\cO_{S,\et})$. Finally,  smoothness of $S$ implies that $\cA=\Omega^1_{S, \exact}\,$.

\subsection{Definition of $\BT_1^G(S)$, where $S\in\Sm_{\BF_p}$}  \label{ss:the key definition}
We will use the notion of $p$-curvature of an integrable connection (see \cite[\S 5]{K70}). We say that a connection is \emph{$p$-integrable} if it is integrable and its $p$-curvature is zero.
\subsubsection{Definition}   \label{sss:the key definition}
Let $G\in\Shim_1$. Let $M$ and $P^\pm$ be as in \S\ref{ss:parabolics,Levi}. Let $S\in\Sm_{\BF_p}$. Then $\BT_1^G (S)$ is the groupoid of the following data:
 
 (i) a $P^\pm$-torsor $\cF^\pm$ on $S$;
 
 (ii) an isomorphism $\cF^+_G\iso\cF^-_G$, where $\cF^\pm_G$ is the $G$-torsor corresponding to $\cF^\pm$;
 
 (iii) an isomorphism 
 \begin{equation}   \label{e:relation between the M-bundles}
 \cF^+_M\iso\Fr_S^*\cF^-_M, 
  \end{equation}
 where $\cF^\pm_M$ is the $M$-torsor corresponding to $\cF^\pm$;
 
 (iv) an integrable connection $\nabla$ on $\cF^+$ satisfying the following conditions: first, the corresponding connection on $\cF^+_M$ should equal the one that
 comes from \eqref{e:relation between the M-bundles} and the usual connection on a $\Fr_S$-pullback; second, the following \emph{Katz condition} should hold:
 \begin{equation}   \label{e:Katz condition}
\pCurv_\nabla=-\KS_\nabla ,
 \end{equation}
 where $\pCurv_\nabla$ is the $p$-curvature of $\nabla$ and $\KS_\nabla\in H^0(S,(\fg_1)_{\cF^-}\otimes\Omega^1_S)$  is the 
 \emph{Kodaira-Spencer}\footnote{The terminology is motivated by the picture from \S\ref{sss:de Rham cohomology}. One can think of data (i)-(ii) as a $G$-bundle on $S$ equipped with a $P^+$-structure and a $P^-$-structure. Informally, these are the conjugate filtration and Hodge filtration, respectively. Also informally, we think of $\nabla$ as a Gauss-Manin connection.} 1-form defined in \S\ref{sss:defining KS} below.
 
 \medskip
 
 Let us explain why \eqref{e:Katz condition} makes sense. The right-hand side  of \eqref{e:Katz condition} is a section of the sheaf $(\fg_1)_{\cF^-}\otimes\Omega^1_S$.
 By the first condition from (iv), the connection on $\cF^+_M$ induced by $\nabla$ is $p$-integrable, so $\pCurv_\nabla$ is a  section of $(\Fr_*(\fg_1)_{\cF^+})^\nabla\otimes\Omega^1_S$, where $\Fr:=\Fr_S$ and $(\Fr_*(\fg_1)_{\cF^+})^\nabla$ is the horizontal part of $\Fr_*(\fg_1)_{\cF^+}$. 
 But \eqref{e:relation between the M-bundles} induces an isomorphism $\Fr^*(\fg_1)_{\cF^-}\iso (\fg_1)_{\cF^+}$, so 
 $(\Fr_*(\fg_1)_{\cF^+})^\nabla=(\fg_1)_{\cF^-}$ and \eqref{e:Katz condition} makes sense.

  \subsubsection{The Kodaira-Spencer 1-form}  \label{sss:defining KS}
  By \S\ref{sss:the key definition}(ii), $\cF^+$ and $\cF^-$ induce the same $G$-torsor, which we denote by $\cF$. The connection $\nabla$ on $\cF^+$ induces a connection $\nabla_G$ on $\cF$, and 
  $\KS_\nabla$ ``measures'' the failure of $\nabla_G$ to preserve the $P^-$-structure on $\cF$. More precisely, $\nabla_G$ is a section of the Atiyah extension\footnote{Recall that in terms of the principal $G$-bundle $E\to S$ corresponding to $\cF$, the sheaf $\cA$ from \eqref{e:Atiyah extension} is the sheaf of $G$-equivariant vector fields on $E$.}
 \begin{equation}   \label{e:Atiyah extension}
  0\to\fg_\cF\to\cA\to\Theta_S\to 0, \quad \Theta_S:=(\Omega^1_S)^*,
 \end{equation}
 and $\KS_\nabla :\Theta_S\to (\fg/\fg_{\le 0})_{\cF^-}$ is the composition of $\nabla_G:\Theta_S\to\cA$ and the map 
 \[
 \cA\to (\fg/\Lie(P^-))_{\cF^-}=(\fg/\fg_{\le 0})_{\cF^-}
 \]
 that comes from the $P^-$-structure on $\cF$. Note that $\fg/\fg_{\le 0}=\fg_1$ by the 1-boundedness assumption.
 
 \subsubsection{Remark}  \label{sss:without 1-boundedness}
 The above definition of $\BT_1^G(S)$ makes sense without the 1-bounded\-ness assumption if \eqref{e:Katz condition} is understood as an equality in $H^0(S,(\fg/\fg_{\le 0})_{\cF^-}\otimes\Omega^1_S)$; this equality implies the Griffiths transversality condition
 $\KS_\nabla\in H^0(S,(\fg_1)_{\cF^-}\otimes\Omega^1_S)$ because $\pCurv_\nabla$ is in $H^0(S,(\fg_1)_{\cF^-}\otimes\Omega^1_S)$. However, the 1-bounded\-ness assumption is used in the proof of the theorems formulated in \S\ref{ss:main theorems} below.

 \subsubsection{On the Katz condition}   \label{sss:Katz condition}
Condition \eqref{e:Katz condition} is inspired by Theorem~3.2 of \cite{K72}. As far as I understand, the minus sign in \eqref{e:Katz condition} does not agree\footnote{Theorem~3.2 of \cite{K72} involves $(-1)^{b+1}$, where $b$ is the number of the cohomology group. The case relevant for us is $b=1$.} with \cite[Thm.~3.2]{K72}, but it agrees with Remark~3.20 of \cite{OV}, which explains a modern point of view on Theorem~3.2 of \cite{K72}. (In \cite[Remark~3.20]{OV} the sign appears when one computes explicitly the functor $C^\bullet_{Y/S}\,$, which occurs in the l.h.s of (3.19.2) and (3.19.3).)

\subsubsection{Remark}   \label{sss:forgetting nabla}
Forgetting the connection $\nabla$ from \S\ref{sss:the key definition}(iv), one gets for each $S\in\Sm_{\BF_p}$ a functor $\BT_1^G(S)\to\Disp_1^G(S)$, where $\Disp_1^G$ is as in \S\ref{sss:Disp definition}.

\subsection{Functoriality in $G\in\Shim_1$}  \label{ss:Functoriality in G}
It is clear that the functor
\begin{equation}   \label{e:BT_1^G on Sm}
\Sm_{\BF_p}^{\op}\to\Grpds, \quad S\mapsto\BT_1^G(S).
\end{equation}
from \S\ref{sss:the key definition} depends functorially on $G\in\Shim_1$, where $\Shim_1$ is the (2,1)-category from~\S\ref{ss:Shim_n}.

\subsection{Formulation of the main theorems}  \label{ss:main theorems}
\begin{thm}  \label{t:1}
There exists a smooth algebraic stack $\BT_{1,\BF_p}^G$ over $\BF_p$ whose restriction to $\Sm_{\BF_p}^{\op}$ is the functor \eqref{e:BT_1^G on Sm}.
\end{thm}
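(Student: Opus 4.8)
The plan is to invoke Lemma~\ref{l:easy lemma}, which reduces the theorem to exhibiting a \emph{single} smooth algebraic $\BF_p$-stack $\sX$ together with an equivalence between $\sX_{\Sm}$ and the functor \eqref{e:BT_1^G on Sm}: uniqueness of $\sX$, and hence the whole assertion, then follows formally. I would construct $\sX$ over $\Disp_1^G$ via the forgetful morphism of \S\ref{sss:forgetting nabla}, using that $\Disp_1^G$ is already a quasi-compact smooth algebraic $\BF_p$-stack of dimension $0$ with affine diagonal (\S\ref{sss:Disp_1^G as a quotient}) and that $\Lau_1^G$ is a finite flat commutative group scheme of height $1$ over it (\S\ref{sss:Lau_1}).

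The core of the argument is a local study of condition \S\ref{sss:the key definition}(iv). Fix a smooth affine $S$ together with a point of $\Disp_1^G$, i.e.\ data (i)--(iii). Any two connections on $\cF^+$ inducing the prescribed connection on $\cF^+_M$ differ by a $1$-form $\omega\in H^0(S,(\fg_1)_{\cF^+}\otimes\Omega^1_S)$, since $\Lie(U^+)=\fg_{>0}=\fg_1$. I would compute the effect of $\nabla\mapsto\nabla+\omega$ on the curvature, on the $p$-curvature $\pCurv_\nabla$ (which automatically takes values in the $\fg_1$-part, as explained in \S\ref{sss:the key definition}), and on the Kodaira-Spencer form $\KS_\nabla$; here the abelianness and the vanishing of the $p$-operation \eqref{e:zero p-operation} on $\fg_1$, together with the identity $[\fg_{\ge1},\fg_1]\subseteq\fg_{\ge2}=0$ from $1$-boundedness, are what make these variations explicit and bounded, and it is here that the $p$-operation \eqref{e:p-operation2} defining $\Lau_1^G$ materializes. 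The conclusion to aim for is that the system ``$\nabla$ integrable and $\pCurv_\nabla=-\KS_\nabla$'' is solvable fppf-locally on $\Disp_1^G$ and has solution sheaf a torsor under (the pullback of) $\Lau_1^G$; this is in substance Theorem~\ref{t:2}, and it is a height-$1$ avatar of Katz's comparison between the $p$-curvature of the Gauss-Manin connection and the Kodaira-Spencer class (\cite[Thm.~3.2]{K72}, \cite[Rem.~3.20]{OV}). Since $\Lau_1^G$ is infinitesimal, its torsors over a reduced $\BF_p$-scheme have trivial automorphisms (as for $\mu_p$ and $\alpha_p$), so over $\Sm_{\BF_p}$ the forgetful morphism looks merely faithful; the banding re-emerges only against non-reduced test schemes, and recovering it is exactly what gives Theorem~\ref{t:1} its content.

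Granting this, the forgetful morphism exhibits the algebraic-stack extension of $\BT_1^G$ as a gerbe over $\Disp_1^G$ banded by $\Lau_1^G$. I would therefore define $\sX$ to be that gerbe over $\Disp_1^G$, regarded over all of $\Sch_{\BF_p}$; it makes sense because $\Disp_1^G$ and $\Lau_1^G$ are defined over $\Sch_{\BF_p}$ and the gerbe admits a quotient presentation -- realize $\Lau_1^G$ as the kernel of a homomorphism of smooth commutative group schemes over $\Disp_1^G$ built from its restricted Lie algebra (just as $\alpha_p$ is realized in \S\ref{sss:alpha_p-torsors}), so that the classifying stack of $\Lau_1^G$ is a quotient of a smooth scheme by a smooth group, hence smooth algebraic, and then twist by the appropriate cohomology class. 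By the generalities on stacks and gerbes of Appendices~\ref{s:stacky nonsense}--\ref{s:gerbe generalities}, a gerbe banded by a finite flat affine group scheme over a quasi-compact algebraic stack with affine diagonal is again an algebraic stack of this kind, and it is smooth over $\Disp_1^G$ (\'etale-locally it is the classifying stack of $\Lau_1^G$, of the shape $[\,\mathrm{smooth}/\mathrm{smooth}\,]$); since $\Disp_1^G$ is smooth of dimension $0$ over $\BF_p$, the stack $\sX$ is smooth over $\BF_p$. Finally $\sX_{\Sm}$ is equivalent to the functor \eqref{e:BT_1^G on Sm} by construction, so $\sX$ is the desired $\BT_{1,\BF_p}^G$.

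The step I expect to be the main obstacle is the local computation of the second paragraph: proving that integrable connections with the prescribed $M$-part satisfying the Katz condition \eqref{e:Katz condition} exist fppf-locally on $\Disp_1^G$ and form a $\Lau_1^G$-torsor, and organizing this coherently enough to read off the banding over $\Sch_{\BF_p}$. This calls for a careful combination of the transformation law for $p$-curvature under gauge changes, Frobenius descent (Cartier theory) for the flat $M$-connection, and the special structure of $1$-bounded $\fg$; it occupies \S\ref{s:the conditions for nabla}--\S\ref{s:proof of main theorems} and yields Theorem~\ref{t:2} along the way. A lesser, bookkeeping-type issue is checking that the identification between $\sX_{\Sm}$ and the functor \eqref{e:BT_1^G on Sm} is functorial in $G\in\Shim_1$ and compatible with the band datum.
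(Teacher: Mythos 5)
Your plan is essentially the paper's own proof: reduce via Lemma~\ref{l:easy lemma} and the forgetful map to $\Disp_1^G$, show by the gauge computation (using $[\fg_1,\fg_1]=0$, the vanishing $p$-operation on $\fg_1$, and the Cartier operator) that connections satisfying condition (a), integrability, and the Katz condition form a torsor (the paper's Theorem~\ref{t:torsor}), identify the band with $\Lau_1^G$, and conclude algebraicity and smoothness from the gerbe/classifying-stack generalities of Appendices~\ref{s:stacky nonsense}--\ref{s:gerbe generalities}. The one ingredient you leave implicit is the precise bridge for the band identification, namely the Artin--Milne computation $R^1\pi_*\Lau_1^G\cong\Ker(\tilde C-\tilde\varphi)$ of Proposition~\ref{p:Artin-Milne} (equivalently Proposition~\ref{p:cB=cA}), which is exactly the ``Cartier theory / Frobenius descent'' step you anticipate as the remaining work.
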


By Lemma~\ref{l:easy lemma}, $\BT_{1,\BF_p}^G$ is unique.
Combining \S\ref{sss:forgetting nabla} and Lemma~\ref{l:easy lemma}, we get a morphism 
\begin{equation} \label{e:BT_1^G to Disp_1^G}
\BT_{1,\BF_p}^G\to\Disp_1^G
\end{equation} 

\begin{thm}  \label{t:2}
The morphism \eqref{e:BT_1^G to Disp_1^G} is an fppf gerbe banded by the group scheme $\Lau_1^G$ from \S\ref{sss:Lau_1}.
\end{thm}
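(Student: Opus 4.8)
The plan is to verify the two defining properties of an fppf gerbe banded by $\Lau_1^G$: first, that \eqref{e:BT_1^G to Disp_1^G} is surjective and that any two objects of $\BT_1^G$ lying over the same object of $\Disp_1^G$ are fppf-locally isomorphic; second, that the automorphism group of any object of $\BT_1^G$ over a point of $\Disp_1^G$ is canonically identified with (the pullback of) $\Lau_1^G$, compatibly with the band structure. Throughout I would work with $S \in \Sm_{\BF_p}$ (this suffices by Lemma~\ref{l:easy lemma}) and use the description in \S\ref{sss:the key definition}: an object of $\BT_1^G(S)$ over a given object $(\cF^\pm, \text{(ii)}, \text{(iii)}) \in \Disp_1^G(S)$ is precisely the choice of an integrable connection $\nabla$ on $\cF^+$ satisfying the two conditions in (iv), namely that $\nabla$ induces the prescribed connection on $\cF^+_M$ and that the Katz condition $\pCurv_\nabla = -\KS_\nabla$ holds.

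The key computation is to analyze the torsor structure on the set of such $\nabla$. Fix one connection $\nabla_0$ on $\cF^+$ inducing the correct connection on $\cF^+_M$ (such a $\nabla_0$ exists fppf-locally, indeed étale-locally, since the obstruction to lifting a connection along the smooth surjection $P^+ \to M$ vanishes locally and $\cF^+$ is locally trivial). Any other connection on $\cF^+$ differs from $\nabla_0$ by a section $\omega \in H^0(S, \fg_{\cF^+} \otimes \Omega^1_S)$; requiring the induced connection on $\cF^+_M$ to be unchanged forces $\omega$ to lie in $H^0(S, (\fg_{>0})_{\cF^+} \otimes \Omega^1_S) = H^0(S, (\fg_1)_{\cF^+} \otimes \Omega^1_S)$, using 1-boundedness and the fact that, by \S\ref{sss:U^+=g_1}, $U^+ \cong \fg_1$ and $P^+ = M \ltimes U^+$ acts on $\fg_1$ through $M$. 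So the set of admissible connections (before imposing the Katz condition) is a torsor under $H^0(S, (\fg_1)_{\cF^+} \otimes \Omega^1_S)$. Then I would compute how $\pCurv$ and $\KS$ change under $\nabla_0 \rightsquigarrow \nabla_0 + \omega$. Since $\fg_1$ is abelian with trivial $p$-operation by \eqref{e:zero p-operation}, the p-curvature is \emph{additive} in this direction — a standard fact (cf.\ Katz) is that for $\omega$ valued in an abelian Lie algebra with zero restricted structure, $\pCurv_{\nabla_0 + \omega} = \pCurv_{\nabla_0} + C(\omega)$ where $C$ is (up to sign) the Cartier-type operator sending $\omega$ to the class of $\omega$ in $H^0(\Fr_*(\fg_1)_{\cF^+} \otimes \Omega^1_S)$ modulo exact forms, i.e.\ essentially $\omega \mapsto \omega^{(p)} - (\text{stuff})$; and $\KS_{\nabla_0 + \omega} = \KS_{\nabla_0} + \bar\omega$, where $\bar\omega$ is the image of $\omega$ under $(\fg_1)_{\cF^+} \xrightarrow{\sim} (\fg_1)_{\cF^-}$ coming from (ii) and (iii) as in \eqref{e:p-operation2}. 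Assembling these, the Katz condition becomes an \emph{affine-linear equation} in $\omega$ whose associated homogeneous equation is exactly $C(\omega) + \bar\omega = 0$, and I would identify the sheaf of solutions of this homogeneous equation, via the dictionary between finite flat group schemes of height $1$ and restricted Lie algebras (\cite[Exp.\ VII$_A$]{SGA3}), with $\Lau_1^G$: indeed the defining $p$-operation \eqref{e:p-operation2} on $(\fg_1)_{\cF^-}$ is precisely the composite $\Fr^*(\fg_1)_{\cF^-} \xrightarrow{\sim} (\fg_1)_{\cF^+} \hookrightarrow \fg_{\cF^+} \xrightarrow{\sim} \fg_{\cF^-} \twoheadrightarrow (\fg_1)_{\cF^-}$ appearing on the left-hand side of the change-of-$\pCurv$ formula.

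From this the gerbe statement follows formally. Surjectivity of \eqref{e:BT_1^G to Disp_1^G} and local nonemptiness of fibers: the fiber over a fixed object of $\Disp_1^G(S)$ is, after passing to an fppf (even étale) cover trivializing the relevant bundles and producing a base connection $\nabla_0$, the solution set of an inhomogeneous linear equation $C(\omega) + \bar\omega = \KS_{\nabla_0} + \pCurv_{\nabla_0}$; since the operator $\omega \mapsto C(\omega) + \bar\omega$ corresponds to the isogeny $\mathrm{Fr} - (\text{linear map})$ defining $\Lau_1^G \hookrightarrow (\text{vector group}) \xrightarrow{\text{isogeny}} (\text{vector group})$, it is an fppf surjection of sheaves, so solutions exist fppf-locally and the torsor of solutions is a $\Lau_1^G$-torsor; in particular any two objects over the same $\Disp_1^G$-point become isomorphic fppf-locally. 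For the automorphism identification: an automorphism of an object of $\BT_1^G(S)$ over $\mathrm{id}$ of its image in $\Disp_1^G(S)$ is an automorphism of $\cF^+$ preserving (ii), (iii) and $\nabla$; such automorphisms are sections of $U^+$ (the automorphisms of $\cF^+$ as a $P^+$-torsor inducing the identity on $\cF^+_M$ and preserving the $G$-structure), and the condition of preserving $\nabla$ cuts them down to the sections of the height-$1$ group scheme with Lie algebra $(\fg_1)_{\cF^\pm}$ and $p$-operation \eqref{e:p-operation2}, i.e.\ to $\Lau_{1,S}^G$ — this is a direct unwinding, using again $U^+ \cong \fg_1$ and \eqref{e:zero p-operation}. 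One should check these identifications are compatible with base change (clear, since everything is a local construction) and with the band (i.e.\ functorial in the object, because all maps in \eqref{e:p-operation2} are canonical), which is routine. I expect the main obstacle to be the sign-and-$p$-curvature bookkeeping in the change-of-connection formula — getting $\pCurv_{\nabla_0+\omega} - \pCurv_{\nabla_0}$ to come out exactly equal to (minus) the $p$-operation map \eqref{e:p-operation2} applied appropriately, so that the homogeneous Katz equation matches the kernel isogeny of $\Lau_1^G$ on the nose; the footnote in \S\ref{sss:Katz condition} about the sign discrepancy with \cite{K72} signals that this is the delicate point, and it is also where the comparison with \cite[Remark~3.20]{OV} (Ogus–Vologodsky) will be needed.
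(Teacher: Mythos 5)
The first half of your plan---reducing to $S\in\Sm_{\BF_p}$, fixing a base connection, and showing that the remaining conditions on $\nabla$ become an affine-linear equation in $\omega$ via change-of-$\KS$ and change-of-$\pCurv$ formulas---is essentially the paper's Theorem~\ref{t:torsor}, with two slips you would need to repair: the differences $\omega$ allowed by conditions (a)--(b) are the \emph{closed} forms in $H^0(S,(\fg_1)_{\cF^+}\otimes\Omega^1_S)$ (integrability plus $[\fg_1,\fg_1]=0$ forces $d\omega=0$, and without closedness the Cartier operator in your change-of-$\pCurv$ formula is not even defined), and the change of $\KS$ is given by the composite map \eqref{e:p-operation2}, which is not an isomorphism $(\fg_1)_{\cF^+}\simeq(\fg_1)_{\cF^-}$.

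The genuine gap is where you pass from the homogeneous equation to the band. You identify the sheaf of solutions of $\tilde C(\omega)=\tilde\varphi(\omega)$ ``with $\Lau_1^G$'' via the restricted-Lie-algebra dictionary, assert the fibers are $\Lau_1^G$-torsors, and separately propose to read off the band by directly computing automorphisms of objects of $\BT_1^G(S)$. Neither works as stated. The solution sheaf is a sheaf of $\BF_p$-vector spaces on $S_{\et}$ with plenty of sections over smooth $S$, whereas $\Lau_1^G$ is infinitesimal, so $H^0(S,\Lau_1^G)=0$ for reduced $S$; the two are genuinely different objects. (Your realization of $\Lau_1^G$ as the kernel of an isogeny of vector groups is also false in general: by \S\ref{sss:Lau_1^G on generic locus} it is generically a sum of copies of $\mu_p$, and $\mu_p$ admits no nonzero homomorphism to $\BG_a$.) Likewise, the relative automorphism groups you propose to unwind vanish on every smooth test scheme (again because $\Lau_1^G$ is infinitesimal and $S$ is reduced), and the explicit description of $\BT_1^G$ is available only on $\Sm_{\BF_p}$, so no direct computation over smooth schemes can detect the band; the nontrivial inertia lives over non-reduced test schemes where the moduli description does not apply. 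The missing ingredient is the Artin--Milne theorem (Proposition~\ref{p:Artin-Milne}): for a height-one finite flat group scheme $H$ over smooth $S$ one has $R^0\pi_*H=0$, $R^{>1}\pi_*H=0$, and $R^1\pi_*H\cong\Ker(\tilde C-\tilde\varphi)$ on $S_{\et}$. It is this statement (Proposition~\ref{p:cB=cA}) that identifies your solution sheaf with the restriction to $\Sm/\Disp_1^G$ of the classifying stack $B\Lau_1^G$---that is, with the sheaf of $\Lau_1^G$-\emph{torsors}, not with $\Lau_1^G$ itself---and only then does the torsor structure from Theorem~\ref{t:torsor}, combined with Lemma~\ref{l:easy lemma} and smoothness of $B\Lau_1^G$, upgrade to the assertion that \eqref{e:BT_1^G to Disp_1^G} is an fppf gerbe banded by $\Lau_1^G$.
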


Theorems~\ref{t:1} and \ref{t:2} will be proved in \S\ref{ss:proof of main theorems}.

\begin{cor}    \label{c:smooth algebraic}
$\BT_{1,\BF_p}^G$ is a quasi-compact smooth algebraic stack over $\BF_p$ of pure dimension~$0$ with affine diagonal. 
\end{cor}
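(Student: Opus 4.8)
The plan is to derive the Corollary from Theorems~\ref{t:1} and \ref{t:2} together with the properties of $\Disp_1^G$ recalled in \S\ref{sss:Disp_1^G as a quotient}. Recall from there that $\Disp_1^G$ is a quasi-compact smooth algebraic $\BF_p$-stack of pure dimension $0$ with affine diagonal, being the quotient of the smooth affine scheme $G$ by the action \eqref{e:action of K on G} of the smooth affine group scheme $K$ of \eqref{e:definition of K}. Theorem~\ref{t:2} asserts that \eqref{e:BT_1^G to Disp_1^G} is an fppf gerbe banded by $\Lau_1^G$, which by \S\ref{sss:Lau_1} is a commutative finite locally free group scheme over $\Disp_1^G$. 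So the strategy is: each of the four properties (algebraic, smooth, quasi-compact of dimension $0$, affine diagonal) passes from $\Disp_1^G$ to $\BT_{1,\BF_p}^G$ along the gerbe morphism \eqref{e:BT_1^G to Disp_1^G}.

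First I would record smoothness: this is already part of Theorem~\ref{t:1}. That $\BT_{1,\BF_p}^G$ is a (quasi-compact) algebraic stack is part of Theorem~\ref{t:1} together with the fact that a gerbe over a quasi-compact algebraic stack banded by a finite locally free group scheme is quasi-compact (the structure map \eqref{e:BT_1^G to Disp_1^G} is an fppf cover, and being a gerbe it is in particular surjective, smooth after base change — indeed finite locally free groups are flat and of finite presentation — so quasi-compactness descends). For the dimension count: a gerbe banded by a finite (hence relative dimension $0$) group scheme does not change dimension, so $\BT_{1,\BF_p}^G$ has pure dimension $0$ because $\Disp_1^G$ does. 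Concretely, smooth-locally on $\Disp_1^G$ the gerbe is $B(\Lau_1^G)$-bundle-like, and $\dim B H = -\dim H = 0$ for $H$ finite.

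The one point requiring a genuine (if short) argument is \emph{affine diagonal}. Here the plan is: the diagonal of $\BT_{1,\BF_p}^G$ sits in a commutative square over the diagonal of $\Disp_1^G$, and the relative diagonal of the gerbe morphism \eqref{e:BT_1^G to Disp_1^G} is what one must control. For a gerbe $f:\sX\to\sY$ banded by a commutative group scheme $A/\sY$, one has a cartesian-type description: the inertia/automorphism stack of $f$ is a torsor under $A$, and in particular $\sX\times_\sX\sX \to \sX\times_\sY\sX$-type comparisons show that $\Delta_f$ is affine precisely because $A\to\sY$ is affine (finite locally free $\Rightarrow$ affine). Then $\Delta_{\sX} = \Delta_f$ composed (via the square) with $f^*\Delta_{\sY}$ up to the base change $\sX\times\sX\to\sY\times\sY$; since $\Delta_{\sY}$ is affine and $f$, being a gerbe, has affine diagonal, the composite $\Delta_{\sX}$ is a composition of affine morphisms, hence affine. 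I would phrase this via the two-step factorization $\BT_{1,\BF_p}^G \xrightarrow{\Delta_f} \BT_{1,\BF_p}^G\times_{\Disp_1^G}\BT_{1,\BF_p}^G \to \BT_{1,\BF_p}^G\times_{\BF_p}\BT_{1,\BF_p}^G$, checking the first map is affine using that it is a torsor under (the pullback of) $\Lau_1^G$ which is affine over the base, and the second is the base change of $\Delta_{\Disp_1^G}$ which is affine.

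The main obstacle — really the only non-formality — is making the affine-diagonal step airtight: one must be slightly careful that ``gerbe banded by $A$'' gives that $\Delta_f$ is an $A$-torsor (or at least an affine morphism) rather than merely quasi-affine, but this is standard since $\mathrm{Isom}$-sheaves in a gerbe are torsors under the band. If one prefers to avoid even this, an alternative is to invoke \S\ref{sss:Disp_1^G as a quotient} plus Theorem~\ref{t:1} to exhibit $\BT_{1,\BF_p}^G$ directly as a quotient of a scheme by a smooth affine group scheme (an extension of $K$ by $\Lau_1^G$, using that the gerbe splits over the smooth atlas $G\to\Disp_1^G$), whereupon quasi-compactness, dimension $0$, and affine diagonal are all immediate from the quotient presentation. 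I expect the proof to be four or five lines either way; it is genuinely a corollary.
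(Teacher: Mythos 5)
Your proposal is correct and is essentially the paper's own argument: the paper proves the corollary in one line by combining Theorems~\ref{t:1}--\ref{t:2} with the properties of $\Disp_1^G$ recorded in \S\ref{sss:Disp_1^G as a quotient}, exactly the transfer-along-the-gerbe strategy you spell out (your extra details on the affine relative diagonal of a gerbe banded by the finite locally free group scheme $\Lau_1^G$ are the correct way to make that one-liner precise).
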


\begin{proof}  
By Theorems~\ref{t:1}-\ref{t:2}, this follows from similar properties of $\Disp_1^G$ (see \S\ref{sss:Disp_1^G as a quotient}).
\end{proof}

\subsubsection{Remark}  \label{sss:BT_1^G smooth over Disp_1^G}
As explained in \S\ref{sss:gerbes are smooth} of Appendix~\ref{s:gerbe generalities}, Theorem~\ref{t:2} implies that the morphism \eqref{e:BT_1^G to Disp_1^G} is smooth.

\subsubsection{A simple example}  \label{sss:G=G_a}
Let $G$ be $\BG_a$ equipped with the usual action of $\BG_m$. In this case $\Disp_1^G=\Spec\BF_p$ and $\Lau_1^G=\mu_p$. Since $H^i_{\fppf} (\Spec\BF_p,\mu_p)=0$ for all $i$, Theorem~\ref{t:2} implies that $\BT_{1,\BF_p}^G$ is canonically isomorphic to the classifying stack of $\mu_p$ over $\BF_p$.

\subsection{A conjecture about $\BT_{1,\BF_p}^G$ in the case $G=GL(d)$}  \label{ss:conjecture for G=GL(d)}
\subsubsection{Some results of E.~Lau}
Let $d,d'$ be integers such that $0\le d'\le d$. Let $n\in\BN$. Let $G$ be the group scheme $GL(d)$ equipped with the $\BG_m$-action corresponding to the composite map 
\begin{equation}    \label{e:G_m to PGL(d)}
\BG_m\to GL(d)\to PGL(d),
\end{equation}
where the first map takes $t$ to a diagonal matrix with $d'$ diagonal entries equal to $t$ and $d-d'$ diagonal entries equal to $1$.

Let $\BT_n^{d,d'}$ be the stack of $n$-truncated Barsotti-Tate groups of height $d$ and dimension $d'$.
Using a covariant version of Dieudonn\'e theory\footnote{Those who prefer \emph{contra}variant Dieudonn\'e theory should replace $d'$ by $d-d'$ in the definition of \eqref{e:G_m to PGL(d)}.},  E.~Lau defined in \cite{Lau13} a canonical morphism $\BT_n^{d,d'}\otimes\BF_p\to\Disp_n^G$. Moreover, according to Theorem B of \cite{Lau13}, this morphism is a gerbe banded by a commutative locally free finite group scheme over $\Disp_n^G$. We denote this group scheme by $\Lau_n^{G, {\rm true}}$. 

According to \cite{Lau13}, the group scheme $\Lau_n^{G, {\rm true}}$ is infinitesimal and has order $p^{nd'(d-d')}$ (see Theorem B of \cite{Lau13} and the paragraph after it).
Moreover, Remark~4.8 of \cite{Lau13} describes the restriction of $\Lau_n^{G, {\rm true}}$ to the generic locus of $\Disp_n^G$; in the case $n=1$, it is canonically isomorphic to the restriction of $\Lau_1^G$ (which was described in \S\ref{sss:Lau_1^G on generic locus}).

\begin{prop}   \label{conj:1}
The isomorphism between the restrictions of $\Lau_1^{G, {\rm true}}$ and $\Lau_1^G$ to the generic locus of $\Disp_1^G$ extends\footnote{Such an extension is unique. Indeed, by \S\ref{sss:Disp_1^G as a quotient}, $\Disp_1^G$ is a quotient of the reduced irreducible scheme $G=GL(n)$.} to an isomorphism over the whole $\Disp_1^G$.
\end{prop}

\begin{proof}
Follows from \cite[Thm.~4.4.2(ii)]{Dr23} and \cite[\S~9.2.2]{Dr23}.
\end{proof}

Combining Proposition~\ref{conj:1} with Theorem~\ref{t:1}, we see that $\BT_{1,\BF_p}^G$ and $\BT_1^{d,d'}\otimes\BF_p$ are gerbes over $\Disp_1^G$ banded by the same group scheme $\Lau_1^G$.

\begin{conj}   \label{conj:2}
These two gerbes are isomorphic.
\end{conj}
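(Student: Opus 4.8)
\emph{Overview of the strategy.} I would deduce \conjref{conj:2} from the construction of a single morphism of $\Disp_1^G$-stacks
\[
\Psi\colon \BT_1^{d,d'}\otimes\BF_p\longrightarrow \BT_{1,\BF_p}^G,
\]
together with the standard fact that a morphism between two gerbes over a common base, both banded by the same commutative group scheme and inducing an \emph{isomorphism} (in particular the identity) on that band, is necessarily an equivalence. Granting \conjref{conj:1} (so that both sides are gerbes over $\Disp_1^G$ banded by $\Lau_1^G$) and \thmref{t:1}--\thmref{t:2}, this reduces \conjref{conj:2} to producing $\Psi$ over $\Disp_1^G$ and checking its compatibility with the two bandings.

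\emph{Construction of $\Psi$.} By \lemref{l:easy lemma} it suffices to define $\Psi$ on $S$-points for $S\in\Sm_{\BF_p}$, since both sides are smooth algebraic $\BF_p$-stacks (the left by Grothendieck's theorem, the right by \thmref{t:1}). So let $S\in\Sm_{\BF_p}$ and let $H$ be a $1$-truncated Barsotti--Tate group of height $d$ and dimension $d'$ over $S$. The Dieudonné crystal $\bD(H)$ on $\CRIS(S/\BZ_p)$ (Berthelot--Breen--Messing, de~Jong), evaluated on $S$ itself, is a finite locally free $\cO_S$-module carrying an integrable connection $\nabla$, a Hodge filtration, a conjugate filtration, and a Frobenius-linear identification of their associated gradeds. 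The filtrations and the Frobenius datum are exactly E.~Lau's $F$-zip attached to $H$: they provide the underlying $S$-point of $\Disp_1^G$, i.e.\ data (i)--(iii) of \secref{sss:the key definition}, while $\nabla$ provides data (iv). Two things must be checked: first, that $\nabla$ induces on $\cF^+_M$ the canonical connection on a $\Fr_S$-pullback (the ``Frobenius descent'' property of the conjugate filtration of a Dieudonné crystal in characteristic $p$); second, that the Katz condition $\pCurv_\nabla=-\KS_\nabla$ of \eqref{e:Katz condition} holds --- this is the Dieudonné-theoretic incarnation of \cite[Thm.~3.2]{K72}, with the sign normalized as in \cite[Remark~3.20]{OV}, compatibly with \secref{sss:Katz condition}. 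Functoriality of $\bD$ makes $\Psi$ a morphism of stacks, and forgetting $\nabla$ recovers Lau's morphism $\BT_1^{d,d'}\otimes\BF_p\to\Disp_1^G$, so $\Psi$ lies over $\Disp_1^G$.

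\emph{Compatibility with the bandings and conclusion.} Viewing $\Psi$ as a morphism of gerbes over $\Disp_1^G$, it induces a homomorphism between the bands; one must check it is the identity of $\Lau_1^G$ under the identification $\Lau_1^{G,\,\mathrm{true}}\cong\Lau_1^G$ of \conjref{conj:1}. Since for $G=GL(d)$ connected $\Disp_1^G$ is a quotient of the reduced irreducible scheme $GL(d)$, and since $\underline{\End}(\Lau_1^G)$ is separated over $\Disp_1^G$, it suffices to check this over the generic locus, where everything is explicit: there $\cF^\pm$ are canonically trivial, $\Omega^1$ vanishes at the generic point so $\KS=0=\pCurv_\nabla$, and $\Psi$ is compatible with the description \eqref{e:generic fiber of Lau_n^true} of $\Lau_{1,\mathrm{gen}}^G$ and with Remark~4.8 of \cite{Lau13} by construction. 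Once the induced band map is known to be the identity, the gerbe principle recalled above shows that $\Psi$ is an isomorphism, which is \conjref{conj:2}.

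\emph{Main obstacle.} The decisive --- and genuinely delicate --- point is the construction step: one must know that a $1$-truncated Barsotti--Tate group over an \emph{arbitrary} smooth $\BF_p$-scheme really produces the connection $\nabla$ refining Lau's $F$-zip morphism, i.e.\ that Lau's morphism $\BT_1^{d,d'}\otimes\BF_p\to\Disp_1^G$ lifts to $\BT_{1,\BF_p}^G$ at all, and then that this $\nabla$ satisfies \eqref{e:Katz condition} on the nose, with the precise sign. For honest $p$-divisible groups the Dieudonné crystal and its connection are rigid, but for $1$-truncated groups some care is required; one may have to exploit the smoothness of $\BT_1^{d,d'}$ over $\BZ$ to reduce $\{e}$tale-locally to $p$-divisible groups, or to pass through the $2$-truncated case. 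A more structural route to the same morphism --- and arguably the ``correct'' one --- is to use the prismatic/syntomic description of $\BT_1^{d,d'}\otimes\BF_p$ from Appendix~\ref{s:BT_n^G via syntomification}: in the case $n=m=1$ the syntomification of $\Spf\BZ_p$ collapses to de Rham data, and the Katz condition should emerge precisely as the descent (cocycle) condition, which would identify the two moduli problems directly; cf.\ \secref{sss:Good news}. Of course the whole argument is conditional on \conjref{conj:1}.
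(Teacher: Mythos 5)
Be aware first of all that the paper does not prove this statement: Conjecture~\ref{conj:2} is left open, and the author only expresses the hope (in the sentence following it) that it can be proved via a Dieudonn\'e theory based on the reduction mod $p$ of the syntomification $S^{\Syn}$ of Appendix~\ref{s:BT_n^G via syntomification}. So there is no paper proof to match your argument against; the question is whether your proposal actually closes the conjecture, and it does not. The reduction you perform is sound but routine: granting Conjecture~\ref{conj:1}, a morphism of gerbes over $\Disp_1^G$ inducing the identity on the common band $\Lau_1^G$ is automatically an equivalence, and checking the band compatibility on the generic locus is legitimate because $\Disp_1^G$ is a quotient of the reduced irreducible scheme $GL(d)$ and homomorphisms between finite locally free group schemes form a separated (affine) scheme over the base. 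All of the actual mathematical content is concentrated in the step you yourself flag as the ``main obstacle'': producing $\Psi$ at all, i.e.\ showing that for a $1$-truncated Barsotti--Tate group $H$ over an arbitrary smooth $\BF_p$-scheme $S$, the evaluation of the Dieudonn\'e crystal with its Gauss--Manin connection (a) has underlying $F$-zip agreeing with Lau's Zink-theoretic morphism $\BT_1^{d,d'}\otimes\BF_p\to\Disp_1^G$ of \cite{Lau13} --- this is a nontrivial comparison of Zink's and crystalline Dieudonn\'e theories, not a tautology --- (b) induces on $\cF^+_M$ the canonical connection of a Frobenius pullback, and (c) satisfies the Katz condition \eqref{e:Katz condition} with the precise sign. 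Katz's theorem \cite[Thm.~3.2]{K72} concerns de Rham cohomology of smooth proper families, not Dieudonn\'e crystals of truncated group schemes, so it cannot simply be cited; and your fallback suggestions (reduce \'etale-locally to $p$-divisible groups using smoothness of $\BT_1^{d,d'}$, or pass through the syntomic description of Appendix~\ref{s:BT_n^G via syntomification}) are sketched programs, the second of which is essentially the author's own suggestion and itself rests on the unproved Conjecture~\ref{conj:algebraicity}-type comparisons.

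In addition, your band-compatibility argument is circular as written: the claim that $\Psi$ is ``compatible with \eqref{e:generic fiber of Lau_n^true} and with Remark~4.8 of \cite{Lau13} by construction'' presupposes the construction whose existence is the open point, and the remark that ``$\Omega^1$ vanishes at the generic point'' is not meaningful for a comparison of group schemes over the residual gerbe $BM(\BF_p)$; what is really needed is an identification of the automorphism group schemes of corresponding objects over the generic locus, which again requires the unconstructed $\Psi$. So the proposal is a plausible strategy, conditional on Conjecture~\ref{conj:1}, but the decisive steps remain unproved, exactly as in the paper.
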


As far as I understand, Conjecture~\ref{conj:2} has already been proved in \cite{GMM}.

\section{Analyzing the conditions for the connection $\nabla$}  \label{s:the conditions for nabla}
The definition of $\BT_1^G(S)$ from \S\ref{sss:the key definition} involves a connection $\nabla$, which has to satisfy certain conditions.
In this section we show that these conditions are affine-$\BF_p$-linear; for a precise statement, see Theorem~\ref{t:torsor} below. In \S\ref{s:proof of main theorems} we will deduce Theorems~\ref{t:1}-\ref{t:2}
from Theorem~\ref{t:torsor}.

\subsection{The sheaves $\cT$ and $\cT'$}
\subsubsection{The category $\Sm/\Disp_1^G$}
 Let $\Sm/\Disp_1^G$ be the category of pairs $(S,f)$, where $S\in\Sm_{\BF_p}$ and $f\in\Disp_1^G(S)$. We equip $\Sm/\Disp_1^G$ with the etale topology.

\subsubsection{The sheaf $\cT$}  \label{sss:the sheaf cT}
Let $(S,f)\in\Sm/\Disp_1^G$, so $f\in\Disp_1^G(S)$ is given by data (i)-(iii) from \S\ref{sss:the key definition}. We will use the notation of \S\ref{sss:the key definition} for these data (i.e., $\cF^\pm$, $\cF^\pm_M$, etc.). Let $\cT (S,f)$ be the fiber of the functor $\BT_1^G(S)\to\Disp_1^G(S)$ over $f\in\Disp_1^G(S)$. This fiber is a \emph{set} rather than a groupoid; namely, $\cT (S,f)$ is the set of connections $\nabla$ on the $P^+$-bundle $\cF^+$ satisfying certain conditions. The conditions are as follows:

(a) the connection on $\cF^+_M$ induced by $\nabla$ is equal to the one that comes from the isomorphism $\cF^+_M\iso\Fr_S^*\cF^-_M$;

(b) $\nabla$ is integrable;

(c) $\nabla$ satisfies the Katz condition, i.e.,
  \begin{equation}   \label{e:3Katz condition}
\pCurv_\nabla=-\KS_\nabla ,   
 \end{equation}
 where $\KS_\nabla\in H^0(S,(\fg_1)_{\cF^-}\otimes\Omega^1_S)$  is the Kodaira-Spencer 1-form defined in \S\ref{sss:defining KS} and 
 $\pCurv_\nabla\in H^0(S, (\Fr_*(\fg_1)_{\cF^+})^\nabla\otimes\Omega^1_S)=H^0(S, (\fg_1)_{\cF^-}\otimes\Omega^1_S)$ is the $p$-curvature of $\nabla$ (see \S\ref{sss:the key definition} for details).
 
 The assignment $(S,f)\mapsto \cT (S,f)$ is a sheaf on $\Sm/\Disp_1^G$ (with respect to the etale topology).

 \subsubsection{The sheaf $\cT'\supset\cT$}
 We keep the notation of \S\ref{sss:the sheaf cT}. Let $\cT' (S,f)$ be the set of connections $\nabla$ on $\cF^+$ satisfying condition (a) from \S\ref{sss:the sheaf cT}.
 Then $\cT'$ is a sheaf of sets on $\Sm/\Disp_1^G$, and $\cT\subset\cT'$.

 \subsection{$\cT$ and $\cT'$ as torsors}  \label{ss:cA and cA'}
 \subsubsection{The sheaf $\cA'$}   \label{sss:cA'}
 Define a sheaf of $\BF_p$-vector spaces $\cA'$ on $\Sm/\Disp_1^G$ as follows: 
 \begin{equation}  \label{e:cA'}
\cA' (S,f):=H^0(S,(\fg_1)_{\cF^+}\otimes\Omega^1_S).
 \end{equation}
 Then $\cA'$ acts on $\cT'$ in the usual way (adding a 1-form to a connection). Moreover, $\cT'$ is an $\cA'$-torsor.
 
 Using the isomorphism $\cF^+\iso\Fr_S^*\cF^-$ (which is a part of the data), we can rewrite \eqref{e:cA'} as
  \begin{equation}  \label{e:rewriting cA'}
\cA' (S,f)=H^0(S,\Fr_S^*(\fg_1)_{\cF^-}\otimes\Omega^1_S)=H^0(S,(\fg_1)_{\cF^-}\otimes(\Fr_S)_*\Omega^1_S).
 \end{equation}
 Note that $\Fr_S :S\to S$ induces the identity on the underlying set of $S$, so $(\Fr_S)_*\Omega^1_S$ is just the sheaf $\Omega^1_S$ equipped with the Frobenius-twisted $\cO_S$-action.

  \subsubsection{The next goals}
  In \S\ref{sss:cA} we will define a subsheaf of $\BF_p$-vector spaces $\cA\subset\cA'$. Then we will formulate Theorem~\ref{t:torsor}, which says that $\cT$ is an $\cA$-torsor.

  \subsubsection{The maps $\varphi$, $\tilde\varphi$, $C$, and $\tilde C$}  \label{sss:varphi and C}
  Let $\varphi :(\fg_1)_{\cF^-}\to (\fg_1)_{\cF^-}$ be the $p$-linear map corresponidng to the $\cO_S$-linear map 
  $\Fr_S^*(\fg_1)_{\cF^-}\to (\fg_1)_{\cF^-}$ from formula \eqref{e:p-operation2}. The latter is the composition
  \[
  \Fr_S^*(\fg_1)_{\cF^-}\iso (\fg_1)_{\cF^+}\mono \fg_{\cF^+}\iso\fg_{\cF^-}\epi (\fg_1)_{\cF^-}\, .
  \]
  
 The map $\varphi$ induces a $p$-linear map
  \[
  \tilde\varphi :(\fg_1)_{\cF^-}\otimes (\Fr_S)_*\Omega^1_S\to (\fg_1)_{\cF^-}\otimes\Omega^1_S\, ;
  \]
namely, $\tilde\varphi$ is the tensor product of $\varphi :(\fg_1)_{\cF^-}\to (\fg_1)_{\cF^-}$ and the identity\footnote{See the end of \S\ref{sss:cA'}.} map 
$$(\Fr_S)_*\Omega^1_S\to \Omega^1_S$$
(the latter is $p$-linear).

We have the $\cO_S$-submodule $((\Fr_S)_*\Omega^1_S)_{\closed}\subset (\Fr_S)_*\Omega^1_S$ and the Cartier operator\footnote{In Lemma~\ref{l:coordinate description of C} we will recall the explicit description of the Cartier operator assuming that $S$ is equipped with a coordinate system.}
\[
C:((\Fr_S)_*\Omega^1_S)_{\closed}\to \Omega^1_S,
\]
which is  $\cO_S$-linear and surjective. $C$ induces a surjective $\cO_S$-linear map
 \[
\tilde C:(\fg_1)_{\cF^-}\otimes ((\Fr_S)_*\Omega^1_S)_{\closed}\to (\fg_1)_{\cF^-}\otimes  \Omega^1_S, \quad \tilde C:=\id\otimes C
\]
\subsubsection{The sheaf $\cA$}  \label{sss:cA}
Let us use formula~\eqref{e:rewriting cA'} for $\cA'$. Define a sheaf of $\BF_p$-vector spaces $\cA\subset\cA'$ as follows:
\begin{equation}  \label{e:cA}
\cA(S,f)=\{\omega\in H^0(S,(\fg_1)_{\cF^-}\otimes ((\Fr_S)_*\Omega^1_S)_{\closed}\,|\,\tilde C(\omega )=\tilde\varphi (\omega)\},
\end{equation}
where  $\tilde C$ and $\tilde\varphi$ were defined in \S\ref{sss:varphi and C}.

\subsubsection{Remarks about $\cA$} \label{sss:remarks about cA}

(i) By definition, $\cA$ is the kernel of a certain morphism of sheaves. Later we will see that this morphism is surjective, see Lemma~\ref{l:surjectivity of C-varphi}.

(ii) The ``true nature'' of $\cA$ will be explained in Proposition~\ref{p:cB=cA}.
  
\begin{thm} \label{t:torsor}
The subsheaf $\cT\subset\cT'$ is stable under the action of $\cA\subset\cA'$. Moreover, $\cT$ is an $\cA$-torsor.
\end{thm}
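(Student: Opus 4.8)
The plan is to analyze the action of $\cA'$ on $\cT'$ and track how the two defining conditions of $\cT$ (integrability and the Katz condition) transform when one adds a $1$-form $\omega \in \cA'(S,f) = H^0(S,(\fg_1)_{\cF^-}\otimes\Omega^1_S)$ to a connection $\nabla \in \cT'(S,f)$. Since $\cT'$ is already known to be an $\cA'$-torsor (\S\ref{sss:cA'}), it suffices to show two things locally on $(S,f)$: first, that if $\nabla$ and $\nabla + \omega$ both lie in $\cT$ then $\omega \in \cA$; and second, conversely, that if $\nabla \in \cT$ and $\omega \in \cA$ then $\nabla + \omega \in \cT$. Together with local nonemptiness of $\cT$ (which I would address at the start, or defer to the proof of Theorem~\ref{t:1} if the argument there is independent), these give that $\cT$ is an $\cA$-torsor. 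Because everything is étale-local on $S$, I would immediately reduce to the case where $S$ is affine, all relevant bundles are trivial, and in particular the torsors $\cF^\pm$ are trivialized compatibly with the data; then $\nabla$ becomes $d + A$ for a matrix-valued $1$-form $A$, the $\fg_1$-part of whose ``obstruction to preserving $P^-$'' is the Kodaira-Spencer form.

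The key computational inputs are the standard formulas for how curvature and $p$-curvature change under $\nabla \mapsto \nabla + \omega$. For the curvature: $F_{\nabla+\omega} = F_\nabla + d^{\nabla}\omega$ where $d^\nabla$ is the induced connection on $\fg_\cF \otimes \Omega^1$; but condition (a) forces the $M$-part (equivalently the $\fg_{\le 0}$-part after using the $P^-$-structure, or the $\fg_{\ge 0}$-part via $P^+$) to be rigid, and since $\fg_1$ is abelian and is a module over $M$ in a way compatible with both $P^\pm$, the $\fg_1$-valued piece simplifies: the relevant change is governed by $d\omega$ together with the connection on $(\fg_1)_{\cF^-}$, which by condition (a) is the Frobenius-pullback connection, hence has $p$-curvature zero and is ``constant'' along $\Fr_S$. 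This is precisely why $\omega$ being \emph{closed} (as a section of $(\Fr_S)_*\Omega^1_S$, i.e. lying in $((\Fr_S)_*\Omega^1_S)_{\closed}$) is the right integrability condition on the increment. For the $p$-curvature: there is the classical Jacobson-type / Katz formula expressing $\psi_{\nabla+\omega}$ in terms of $\psi_\nabla$, the $p$-th symbolic power of $\omega$, and the Cartier operator applied to $\omega$; in the abelian setting (using \eqref{e:zero p-operation}, $x^{(p)}=0$ for $x\in\fg_1$) the nonlinear term drops out and one gets, schematically, $\pCurv_{\nabla+\omega} = \pCurv_\nabla - \tilde C(\omega) + (\text{$\varphi$-type term from the $M$-twist})$, with the $\varphi$-term being exactly $\tilde\varphi(\omega)$ once one identifies $\Fr_S^*(\fg_1)_{\cF^-}$ with $(\fg_1)_{\cF^+}$ via the data. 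Meanwhile $\KS_{\nabla+\omega} = \KS_\nabla + \omega$. Subtracting the Katz identity \eqref{e:3Katz condition} for $\nabla+\omega$ from that for $\nabla$ then yields, after the dust settles, $-\tilde C(\omega) + \tilde\varphi(\omega) = -\omega$... — at which point I would reconcile signs and the precise tensor identifications to land exactly on the condition $\tilde C(\omega) = \tilde\varphi(\omega)$ defining $\cA$ in \eqref{e:cA} (with closedness of $\omega$ appearing as a prerequisite for the Katz expression to even make sense, via the integrability half).

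The main obstacle I anticipate is \emph{not} the torsor formalism but getting the two variation formulas — for the curvature and especially for the $p$-curvature — in the correct equivariant, coordinate-free form over the base $(S,f)$, with all the twists by $\cF^\pm$ and the $M$-action on $\fg_1$ bookkept correctly, and with signs matching \eqref{e:Katz condition}/\cite[Remark~3.20]{OV}. The $p$-curvature computation in particular requires care: one needs the identity for $\pCurv_{\nabla+\omega}$ where $\omega$ takes values in a bundle with its own (Frobenius-descent) connection, so the ``$\alpha^{(p)} - \alpha$''-type term from the additive Artin–Schreier-theoretic description of $p$-curvature increments must be matched with $\tilde\varphi - \tilde C$; the fact that $(\fg_1)_{\cF^-}$ carries the Frobenius-pullback connection (condition (a)) is what makes the Cartier operator $C$ and the map $\varphi$ both $\cO_S$-linear after the identifications, so that $\cA$ is genuinely an $\BF_p$-subspace sheaf. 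Once these formulas are pinned down, stability of $\cT$ under $\cA$ and the reverse implication are immediate, and the torsor claim follows from $\cT'$ being an $\cA'$-torsor together with $\cA = \ker(\tilde C - \tilde\varphi)$ acting simply transitively on each nonempty fiber of $\cT$.
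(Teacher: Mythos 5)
Your overall strategy is the same as the paper's (perturb a connection satisfying (a)--(b) by a $\fg_1$-valued $1$-form, use $[\fg_1,\fg_1]=0$ and the vanishing of the $p$-operation on $\fg_1$ to linearize everything, and identify the condition on the increment with $\tilde C(\omega)=\tilde\varphi(\omega)$), but two essential pieces are missing or wrong. First, local nonemptiness of $\cT$ is not optional and cannot be ``deferred to the proof of Theorem~\ref{t:1}'': the paper deduces Theorems~\ref{t:1}--\ref{t:2} \emph{from} Theorem~\ref{t:torsor}, so your deferral would be circular, and the remark following Theorem~\ref{t:torsor} states explicitly that \'etale-local liftability of objects of $\Disp_1^G(S)$ to $\BT_1^G(S)$ is part of what is being proved. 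In the paper this is where the work beyond bookkeeping lies: one first produces, Zariski-locally, a connection satisfying (a)--(b) (Lemma~\ref{l:nabla_0}, by inducing from the $M$-torsor $\cF_M^+=\Fr_S^*\cF_M^-$), and then corrects it to satisfy the Katz condition using the surjectivity of $\tilde C-\tilde\varphi$ as a morphism of \'etale sheaves (Lemma~\ref{l:surjectivity of C-varphi}, proved by a smoothness/open-image argument for the associated map of vector bundles). Your proposal never mentions this surjectivity, and without it your argument only yields that $\cT$ is a pseudo-torsor (``acting simply transitively on each nonempty fiber''), not a torsor.

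Second, your variation formulas are mis-assigned. The correct statements are \eqref{e:change of KS}--\eqref{e:change of p-Curv}: the Kodaira--Spencer form changes by $\tilde\varphi(\omega)$, because $\omega$ is $(\fg_1)_{\cF^+}$-valued and must be pushed into $(\fg_1)_{\cF^-}$ through the composite \eqref{e:p-operation2}; your claim $\KS_{\nabla+\omega}=\KS_\nabla+\omega$ is not even type-correct, since $\omega$ and $\KS_\nabla$ live in sheaves differing by a Frobenius twist. Correspondingly, the $p$-curvature changes by $-\tilde C(\omega)$ alone, with no $\tilde\varphi$-term; your schematic identity ``$-\tilde C(\omega)+\tilde\varphi(\omega)=-\omega$'' is incoherent for the same reason. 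You flag that signs and identifications need reconciling, but that reconciliation is precisely the content of the proof: one needs Jacobson's formula (with $J_1(x,y)=\ad_x^{p-1}(y)$ and the higher $J_i$ killed by $[\fg_1,\fg_1]=0$ together with $x^{(p)}=0$ on $\fg_1$) combined with the coordinate description of the Cartier operator, $h_i^p=-\partial_i^{p-1}(f_i)$, to obtain $\pCurv_{\nabla+\omega}-\pCurv_\nabla=-\tilde C(\omega)$, after which the difference of the two Katz conditions is exactly $\tilde\varphi(\omega)-\tilde C(\omega)$ and the defining condition of $\cA$ in \eqref{e:cA} drops out. As written, the proposal asserts the torsor structure but leaves both the nonemptiness and this key computation unestablished.
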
 

The proof will be given in \S\ref{ss:proof of the torsor theorem}-\ref{ss:change of p-Curv}.

\begin{rem}
In particular, Theorem~\ref{t:torsor} says that for every $S\in\Sm_{\BF_p}$, every object of $\Disp_1^G(S)$ admits a lift to $\BT_1^G(S)$ etale-locally on $S$.
As explained in \S\ref{sss:BT_1^G smooth over Disp_1^G}, this is a part of Theorem~\ref{t:2}, which we want to prove.
\end{rem}

\subsection{Proof of Theorem~\ref{t:torsor}}  \label{ss:proof of the torsor theorem}
Given $(S,f)\in\Sm/\Disp_1^G$, let $\cT_S$ be the restriction of $\cT$ to~$S_{\et}$; similarly, we have $\cT'_S$, $\cA_S$, $\cA'_S$. The problem is to prove that
the subsheaf $\cT_S\subset\cT'_S$ is an $\cA_S$-torsor.

\begin{lem}  \label{l:nabla_0}
Zariski-locally on $S$, there exists a connection $\nabla$ on $\cF^+$ satisfying conditions (a)-(b) from \S\ref{sss:the sheaf cT}.
\end{lem}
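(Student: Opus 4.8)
The plan is to prove Lemma~\ref{l:nabla_0} by first producing a connection satisfying condition (a), then correcting it to also satisfy condition (b), all Zariski-locally on $S$. For the first step, note that condition (a) only constrains the connection through its induced connection on the $M$-torsor $\cF^+_M$: the required connection on $\cF^+_M$ is the canonical one coming from the isomorphism $\cF^+_M\iso\Fr_S^*\cF^-_M$ and the standard connection on a Frobenius pullback. So I must lift a connection on the $M$-torsor $\cF^+_M$ to a connection on the $P^+$-torsor $\cF^+$. Since $P^+=M\ltimes U^+$ (\lemref{l:easy lemma}'s predecessor, i.e. Lemma in \S\ref{ss:parabolics,Levi}) and $U^+\cong\fg_1$ is a vector group, the obstruction to such a lift lives in an $H^1$ of a coherent (in fact locally free) sheaf on $S$, namely in $H^1(S,(\fg_1)_{\cF^+}\otimes\Omega^1_S)$; more precisely, the sheaf of connections on $\cF^+$ inducing the prescribed connection on $\cF^+_M$ is a torsor under $(\fg_1)_{\cF^+}\otimes\Omega^1_S$ — this is exactly the sheaf $\cT'$ and \S\ref{sss:cA'} already records that $\cT'$ is an $\cA'$-torsor. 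A torsor under a coherent sheaf is Zariski-locally (indeed on any affine) trivial, so Zariski-locally on $S$ there exists a connection $\nabla_1$ on $\cF^+$ satisfying (a).

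For the second step, I must make $\nabla_1$ integrable while preserving (a). The curvature $R_{\nabla_1}$ is a priori a section of $\fg_{\cF^+}\otimes\Omega^2_S$, but the constraint (a) forces its image in $(\fg/\fg_{>0})_{\cF^+}=(\fg_0\oplus\fg_{<0})_{\cF^+}\otimes\Omega^2_S$ to be the curvature of the induced connection on the $P^-$-part... more carefully: the induced connection on $\cF^+_M$ is a Frobenius pullback, hence has zero curvature, so $R_{\nabla_1}$ maps to $0$ in $\fg_0\otimes\Omega^2_S$; and since $\fg_i=0$ for $i>1$ with $\fg_1$ the only positive piece, $R_{\nabla_1}$ lies in $(\fg_{\le 0})_{\cF^+}\otimes\Omega^2_S$ — wait, I should think about which piece. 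Let me instead argue as follows: changing $\nabla_1$ to $\nabla_1+\omega$ with $\omega\in(\fg_1)_{\cF^+}\otimes\Omega^1_S$ changes the curvature by $d\omega$ plus a bracket term; since $\fg_1$ is abelian and $[\fg_1,\fg_1]=0$, and since the $\fg_1$-component of $\nabla_1$'s connection form interacts with $\omega$ via the $M$-action, the change in the $\fg_1\otimes\Omega^2_S$-component of the curvature is $\nabla^{M}_1(\omega)$ where $\nabla^M_1$ is the (Frobenius-pullback) connection on $(\fg_1)_{\cF^+}$. So I need the $\fg_1\otimes\Omega^2$-component of $R_{\nabla_1}$ to be exact for $\nabla^M_1$, Zariski-locally. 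Because $\nabla^M_1$ is a Frobenius pullback of a connection, its de Rham complex is (by Cartier / the projection formula) controlled by the de Rham complex of $\cO_S$ twisted by a locally free sheaf; concretely $\Omega^\bullet$ with a Frobenius-twisted differential, whose cohomology Zariski-locally on the smooth affine $S$ is computed and in particular $H^2$ of the relevant truncated complex vanishes locally — or more simply, any closed $2$-form valued in a locally free sheaf with connection is locally exact when the connection is (locally) a pullback along a smooth map, by the Poincaré lemma in the relative setting. One also has to check the other curvature components of $R_{\nabla_1}$ automatically vanish or are handled: the $\fg_0$-component vanishes since the $M$-connection has zero curvature, the $\fg_{<0}$-component — here I use that there are no negative-weight constraints from (a), but actually (a) constrains only $\cF^+_M$, so the $\fg_{<0}$ piece of the curvature is unconstrained and I must kill it too; I expect this is handled by the same local-exactness argument or by choosing $\nabla_1$ more carefully from the start (e.g. using a local trivialization of $\cF^+$ compatible with the $M$-reduction, in which one can write down an explicit integrable connection).

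The main obstacle I anticipate is the bookkeeping in the second step: identifying precisely which components of the curvature of an arbitrary (a)-satisfying connection are forced to vanish versus which must be corrected, and verifying that the correction can be done Zariski-locally — i.e. a local-exactness (Poincaré-type) statement for the de Rham complex of a locally free sheaf equipped with a connection that is a Frobenius pullback on its $M$-part. I expect the cleanest route is to pick, Zariski-locally, a section of $\cF^+$ reducing to a section of $\cF^+_M$ that is horizontal for the Frobenius-pullback connection (possible because Frobenius-pullback connections are locally trivial: pull back a local trivialization of $\cF^-_M$), and then observe that in the resulting trivialization the prescribed connection on $\cF^+_M$ is the trivial connection $d$, so that the connection $\nabla_1 := d$ on $\cF^+$ (using the product decomposition $P^+=M\ltimes U^+$ and the trivialization) is integrable and satisfies (a) on the nose. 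This sidesteps the curvature computation entirely and reduces the whole lemma to the local triviality of Frobenius-pullback $M$-torsors-with-connection plus the $P^+=M\ltimes U^+$ decomposition, both already available in the excerpt.
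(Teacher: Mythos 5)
Your final ``cleanest route'' is close in spirit to the paper's argument (use $P^+=M\ltimes U^+$ together with the canonical connection on a Frobenius pullback), but as written it has a genuine gap: it requires a Zariski-local \emph{horizontal trivialization} of $\cF^+_M$, i.e.\ (via $\cF^+_M\iso\Fr_S^*\cF^-_M$) a Zariski-local section of the $M$-torsor $\cF^-_M$. For a general smooth affine group scheme $M$ such torsors are only \'etale- (or fppf-) locally trivial, not Zariski-locally trivial, so your construction only yields \'etale-local existence --- weaker than the lemma as stated (though admittedly enough for the application to Theorem~\ref{t:torsor}). The paper's proof trivializes nothing: Zariski-locally it chooses an isomorphism between $\cF^+$ and the $P^+$-torsor induced from $\cF^+_M$ along $M\mono P^+$ --- such a reduction exists Zariski-locally because the sheaf of reductions is a torsor under the vector bundle obtained by twisting $U^+\cong\fg_1$ by $\cF^+_M$, and $H^1$ of a quasi-coherent sheaf vanishes on affines --- and then takes $\nabla$ to be induced along $M\mono P^+$ by the canonical connection on $\cF^+_M=\Fr_S^*\cF^-_M$, which exists globally and is integrable; conditions (a) and (b) are then immediate.

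Your fallback route (start from an arbitrary $\nabla_1$ satisfying (a) and correct its curvature) rests on a false statement: it invokes a Poincar\'e lemma, i.e.\ local exactness in degree $2$ of the de Rham complex of $(\fg_1)_{\cF^+}$ with its Frobenius-pullback connection. In characteristic $p$ this fails: by the Cartier isomorphism the cohomology sheaves of the de Rham complex are nonzero (already $x^{p-1}y^{p-1}\,dx\wedge dy$ is closed and not locally exact), and the same holds after tensoring with $\Fr_S^*(\fg_1)_{\cF^-}$ and its canonical connection. Local exactness of the specific curvature class of $\nabla_1$ is exactly what the lemma guarantees \emph{a posteriori}, so it cannot be taken as an input. (The accompanying bookkeeping worry about a $\fg_{<0}$-component is vacuous: $\nabla_1$ is a connection on a $P^+$-torsor, so its curvature is valued in $(\fg_0\oplus\fg_1)_{\cF^+}$, and the $\fg_0$-component vanishes by (a) and functoriality of curvature under $P^+\to M$; the whole issue sits in the $\fg_1$-component, which is precisely where the unavailable Poincar\'e lemma is being used.)
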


\begin{proof} 
Zariski-locally, there exists an isomorphism between $\cF^+$ and the $P^+$-torsor induced from the $M$-torsor $\cF_M^+$ via the inclusion $M\mono P^+$.
Choose such an isomorphism and take $\nabla$ to be induced by the canonical connection on $\cF^+_M=\Fr_S^*\cF_M^-$.
\end{proof} 

\begin{lem}  \label{l:nabla-nabla_0}
Let $\nabla$ be as in Lemma~\ref{l:nabla_0}. A connection $\tilde\nabla$ on $\cF^+$ satisfies conditions (a)-(b) from \S\ref{sss:the sheaf cT} if and only if
$\tilde\nabla=\nabla+\omega$, where $\omega\in H^0(S, (\fg_1)_{\cF^+}\otimes\Omega^1_S)$ and $d\omega=0$.
\end{lem}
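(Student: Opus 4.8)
The plan is to set $\omega:=\tilde\nabla-\nabla$, which is a global section of $(\fg_{\ge 0})_{\cF^+}\otimes\Omega^1_S$ (with $\fg_{\ge 0}=\Lie(P^+)$), since the difference of any two connections on the $P^+$-torsor $\cF^+$ is a $1$-form with values in the adjoint bundle; the task is then to characterize those $\omega$ for which $\tilde\nabla=\nabla+\omega$ satisfies conditions (a) and (b) of \S\ref{sss:the sheaf cT}.

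First I would handle condition (a). Pushing a connection forward along $P^+\to M$ is compatible with the surjection of adjoint bundles induced by $\fg_{\ge 0}=\Lie(P^+)\twoheadrightarrow\fg_0=\Lie(M)$, so the connection $\tilde\nabla$ induces on $\cF^+_M$ is the one $\nabla$ induces plus the image of $\omega$ under this surjection. Since $\nabla$ already satisfies (a) by the choice made in Lemma~\ref{l:nabla_0}, the connection $\tilde\nabla$ satisfies (a) if and only if that image vanishes, i.e.\ $\omega$ is a section of $(\fg_{>0})_{\cF^+}\otimes\Omega^1_S$. By $1$-boundedness ($\fg_i=0$ for $i>1$, see \S\ref{sss:U^+=g_1}) one has $\fg_{>0}=\fg_1$, so this is precisely $\omega\in H^0(S,(\fg_1)_{\cF^+}\otimes\Omega^1_S)$.

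Next I would handle condition (b), assuming now $\omega\in H^0(S,(\fg_1)_{\cF^+}\otimes\Omega^1_S)$. The curvature of $\tilde\nabla=\nabla+\omega$ is $F_{\tilde\nabla}=F_\nabla+d_\nabla\omega+\tfrac12[\omega\wedge\omega]$, where $d_\nabla$ is the covariant differential induced by $\nabla$ on the adjoint bundle and $[-\wedge-]$ couples the wedge of forms with the Lie bracket. Here $F_\nabla=0$ because $\nabla$ is integrable, and $[\omega\wedge\omega]=0$ because $\fg_1$ is abelian (again by $1$-boundedness, \S\ref{sss:U^+=g_1}) while $\omega$ takes values in $(\fg_1)_{\cF^+}$; hence $F_{\tilde\nabla}=d_\nabla\omega$. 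Since $P^+$ acts on $\fg_1$ through $M$, the differential $d_\nabla$ restricted to $(\fg_1)_{\cF^+}$ depends only on the connection $\nabla$ induces on $\cF^+_M$, which by (a) is the canonical connection on $\Fr_S^*\cF^-_M$; thus $(\fg_1)_{\cF^+}=\Fr_S^*(\fg_1)_{\cF^-}$ with its canonical (Frobenius-pullback) connection, and $d_\nabla\omega$ is exactly the de Rham differential $d\omega$ appearing in the statement. Therefore $\tilde\nabla$ is integrable iff $d\omega=0$, and combining with the previous paragraph proves the lemma.

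I expect the only genuine subtlety to lie in the last paragraph: fixing the sign and normalization conventions so that $F_{\nabla+\omega}=F_\nabla+d_\nabla\omega+\tfrac12[\omega\wedge\omega]$ holds on the nose, verifying that $[\omega\wedge\omega]$ really vanishes for a $1$-form valued in the \emph{abelian} algebra $\fg_1$, and---most essentially---identifying the covariant differential $d_\nabla$ on $(\fg_1)_{\cF^+}$ with the plain ``$d$'' of the statement; it is at this point that condition (a), together with the fact (from \S\ref{sss:Lau_1}) that $P^\pm$ act on $\fg_1$ via $M$, does the real work. Everything else is the standard affine structure on the space of connections on a torsor.
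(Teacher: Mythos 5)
Your proposal is correct and follows essentially the same route as the paper: condition (a) forces $\omega$ to take values in $(\fg_1)_{\cF^+}$, and condition (b) is the Maurer--Cartan equation, whose bracket term dies because $\fg_1$ is abelian, leaving $d\omega=0$. Your extra care in identifying the covariant differential $d_\nabla$ on $(\fg_1)_{\cF^+}=\Fr_S^*(\fg_1)_{\cF^-}$ (via the fact that $P^+$ acts on $\fg_1$ through $M$) with the plain $d$ of the statement is exactly the implicit convention the paper uses when it later regards $\omega$ as lying in $H^0(S,(\fg_1)_{\cF^-}\otimes((\Fr_S)_*\Omega^1_S)_{\closed})$.
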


\begin{proof} 
Write $\tilde\nabla=\nabla+\omega$, where $\omega\in H^0(S, (\fg_{\ge 0})_{\cF^+}\otimes\Omega^1_S)$. Condition (a) means that
$\omega\in H^0(S, (\fg_1)_{\cF^+}\otimes\Omega^1_S)$. Condition (b) is equivalent to the Maurer-Cartan equation for~$\omega$. 
Since $[\fg_1,\fg_1]=0$, this is just the equation $d\omega=0$.
\end{proof} 

As noted in \S\ref{sss:cA'}, $H^0(S,(\fg_1)_{\cF^+}\otimes\Omega^1_S)=H^0(S,(\fg_1)_{\cF^-}\otimes(\Fr_S)_*\Omega^1_S)$.
So the $\omega$ from Lemma~\ref{l:nabla-nabla_0} is in $H^0(S,(\fg_1)_{\cF^-}\otimes((\Fr_S)_*\Omega^1_S)_{\closed})$.

\begin{lem}  \label{l:change of KS & p-Curv}
In the situation of Lemma~\ref{l:nabla-nabla_0}, we have
\begin{equation}   \label{e:change of KS}
\KS_{\tilde\nabla} -\KS_{\nabla}=\tilde\varphi (\omega),
\end{equation}
\begin{equation}     \label{e:change of p-Curv}
\pCurv_{\tilde\nabla} -\pCurv_{\nabla}=-\tilde C (\omega),
\end{equation}
where $\tilde\varphi :(\fg_1)_{\cF^-}\otimes (\Fr_S)_*\Omega^1_S\to (\fg_1)_{\cF^-}\otimes\Omega^1_S$ and 
$\tilde C:(\fg_1)_{\cF^-}\otimes ((\Fr_S)_*\Omega^1_S)_{\closed}\to (\fg_1)_{\cF^-}\otimes  \Omega^1_S$ are as in \S\ref{sss:varphi and C}.
\end{lem}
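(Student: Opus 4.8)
The plan is to prove the two identities \eqref{e:change of KS} and \eqref{e:change of p-Curv} separately, working locally on $S$ where everything trivializes, by tracing through the definitions of the Kodaira--Spencer form and the $p$-curvature and computing how each changes when the connection $\nabla$ on $\cF^+$ is replaced by $\nabla+\omega$ with $\omega\in H^0(S,(\fg_1)_{\cF^+}\otimes\Omega^1_S)$ closed. Throughout I use the canonical identification $(\fg_1)_{\cF^+}\iso\Fr_S^*(\fg_1)_{\cF^-}$ coming from the data, so that $\omega$ may be regarded as a section of $(\fg_1)_{\cF^-}\otimes(\Fr_S)_*\Omega^1_S$, indeed of its closed part.

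For \eqref{e:change of KS}: by the construction in \S\ref{sss:defining KS}, $\KS_\nabla$ is obtained from the induced connection $\nabla_G$ on $\cF=\cF^+_G=\cF^-_G$ by composing with the projection $\fg_\cF\to(\fg/\Lie(P^-))_{\cF^-}=(\fg_1)_{\cF^-}$ coming from the $P^-$-structure. Replacing $\nabla$ by $\nabla+\omega$ changes $\nabla_G$ by the image of $\omega$ under $(\fg_1)_{\cF^+}\mono\fg_{\cF^+}\iso\fg_{\cF^-}$, and then the $P^-$-projection sends this to the composite $\fg_{\cF^-}\epi(\fg_1)_{\cF^-}$. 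The total effect on the $(\fg_1)_{\cF^-}$-component is exactly the $p$-linear endomorphism $\varphi$ of \S\ref{sss:varphi and C} applied to the coefficient, tensored with the identity on the form part; that is precisely $\tilde\varphi(\omega)$. This step is essentially bookkeeping once one is careful that the map taking $\omega\in(\fg_1)_{\cF^+}\otimes\Omega^1_S$ to its contribution to $\KS$, when rewritten via the trivialization $(\fg_1)_{\cF^+}=\Fr_S^*(\fg_1)_{\cF^-}$, is $\tilde\varphi$ and not the identity.

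For \eqref{e:change of p-Curv}: here one uses the standard formula for the $p$-curvature of a connection in terms of a local coordinate system, together with the Cartier-operator description of how $p$-curvature transforms under adding a closed $1$-form. Concretely, after choosing an isomorphism $\cF^+\cong$ the $P^+$-torsor induced from $\cF^+_M=\Fr_S^*\cF^-_M$ as in Lemma~\ref{l:nabla_0}, the connection $\nabla+\omega$ differs from the (Frobenius-descent) connection $\nabla$ by $\omega\in H^0(S,(\fg_1)_{\cF^+}\otimes\Omega^1_S)$, and since $[\fg_1,\fg_1]=0$ and $\fg_1$ has trivial $p$-operation by \eqref{e:zero p-operation}, the change in $p$-curvature is governed by the linear term only. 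The key input is the classical fact (Katz, Cartier) that for an abelian coefficient sheaf with trivial $p$-operation the $p$-curvature of $\nabla+\omega$ minus that of $\nabla$ equals $-C(\omega)$, where $C$ is the Cartier operator applied to the (necessarily closed) form $\omega$ — up to the sign convention recorded in \S\ref{sss:Katz condition}. Tensoring with $\id$ on $(\fg_1)_{\cF^-}$ gives $-\tilde C(\omega)$.

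The main obstacle I expect is \eqref{e:change of p-Curv}, specifically pinning down the sign and correctly identifying the additivity-under-$\omega$ of the $p$-curvature map with the Cartier operator. The subtlety is that $p$-curvature is \emph{not} additive in $\nabla$ in general, so one must exploit that the difference $\omega$ lies in the abelian ideal $\fg_1$ with zero $p$-operation — this makes the Jacobson-type correction terms vanish and reduces the computation to the $\BG_a$-coefficient case, where $p\mapsto$ $p$-curvature of $d+f\,dt$ is $-(f^p - f')\,dt$ (in a local coordinate $t$), and the closed-form condition together with the Cartier operator identity $C(g^p\,\frac{dt}{t}\text{-type expression})$ does the rest. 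I would isolate this as a lemma about $\BG_a$-connections (or cite the relevant computation from \cite{K72} or \cite{OV}), then bootstrap to general $G\in\Shim_1$ by the functoriality of \S\ref{ss:Functoriality in G} applied to the $\BG_m$-equivariant isomorphism \eqref{e:U^+=g_1}. The Kodaira--Spencer identity \eqref{e:change of KS}, by contrast, is a direct unwinding of definitions with no analytic content.
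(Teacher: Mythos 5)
Your proposal is correct and follows essentially the same route as the paper: \eqref{e:change of KS} is unwound directly from the definition of $\tilde\varphi$, and \eqref{e:change of p-Curv} is reduced, using $[\fg_1,\fg_1]=0$ and the vanishing $p$-operation on $\fg_1$ so that only the Jacobson term $\ad_x^{p-1}(y)$ survives, to the coordinate description of the Cartier operator ($h_i^p=-\partial_i^{p-1}(f_i)$), which is exactly the paper's argument (parallel to Prop.~7.1.2 of \cite{K72}). One small slip: the rank-one $p$-curvature of $d+f\,dt$ is $(f^p+\partial_t^{p-1}f)\otimes dt$ (suitably Frobenius-twisted), not $-(f^p-f')\,dt$; this does not affect your argument's structure, since in the difference $\pCurv_{\nabla+\omega}-\pCurv_{\nabla}$ the $f^p$ term is killed by the trivial $p$-operation and the minus sign comes from Cartier's formula.
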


Formula \eqref{e:change of KS} immediately follows from the definition of $\tilde\varphi$. A proof of \eqref{e:change of p-Curv} will be given in \S\ref{ss:change of p-Curv}.

Theorem~\ref{ss:proof of the torsor theorem} follows from \eqref{e:change of KS}-\eqref{e:change of p-Curv} and the next lemma.

\begin{lem}  \label{l:surjectivity of C-varphi}
The map $(\fg_1)_{\cF^-}\otimes ((\Fr_S)_*\Omega^1_S)_{\closed}\overset{\tilde C-\tilde\varphi}\longrightarrow (\fg_1)_{\cF^-}\otimes  \Omega^1_S$ is a surjective morphism of etale sheaves of 
$\BF_p$-vector spaces.
\end{lem}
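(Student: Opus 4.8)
The statement is local on $S$, so I may work Zariski-locally and assume the bundle $(\fg_1)_{\cF^-}$ is trivial, i.e. a direct sum of copies of $\cO_S$; since $\tilde C$ and $\tilde\varphi$ are built by tensoring with $\id$ on this bundle, the problem reduces to the case $\fg_1 = \BF_p$, where the claim becomes: the map
\[
((\Fr_S)_*\Omega^1_S)_{\closed}\xrightarrow{\ C-\varphi\ }\Omega^1_S
\]
is a surjection of etale sheaves of $\BF_p$-vector spaces, for some additive ($p$-linear) operator $\varphi$ built from the data $f$. Actually I would keep $(\fg_1)_{\cF^-}$ around but use that etale-locally it is trivialized: after such a trivialization $\varphi$ is given by a matrix and the operator $\tilde C - \tilde\varphi$ is block-triangularizable after a further etale cover (filtering $\fg_1$ by the kernel/image of powers of the $p$-linear $\varphi$, whose semisimple part can be diagonalized etale-locally over $\BF_p$). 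The upshot is that it suffices to treat the scalar case, possibly replacing $\varphi$ by $\varphi + (\text{nilpotent part})$, and even there one can argue componentwise.

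\textbf{Key steps.} First I would recall that the Cartier operator sits in the exact sequence coming from the Cartier isomorphism: on a smooth $\BF_p$-scheme, $C:(\Fr_*\Omega^1_S)_{\closed}\to\Omega^1_S$ is surjective with kernel $\Fr_*(\Omega^1_S)_{\exact} = d(\cO_S)$ (pushed forward), and it is $\cO_S$-linear for the twisted action. So $C$ alone is already an etale-locally split surjection of $\cO_S$-modules; the issue is only that I must hit $\Omega^1_S$ by the \emph{difference} $C - \varphi$ where $\varphi$ is merely additive. Second, the decisive point: $\varphi$ is ``small'' relative to $C$ in the following sense — $\varphi$ factors through $(\fg_1)_{\cF^-}\otimes\Omega^1_S$ (it is the composite of the $p$-linearization of the projection $(\Fr_*\Omega^1)_{\closed}\to\Omega^1$, i.e. pullback along $\Fr$, with the $\cO_S$-linear bundle map $\varphi_0:(\fg_1)_{\cF^-}\to(\fg_1)_{\cF^-}$), so on sections $\tilde\varphi(\omega)$ depends only on the image $\bar\omega\in\Omega^1_S$ of $\omega$, and $\tilde\varphi(\omega) = \varphi_0(\text{``}F^*\text{''}\bar\omega)$ is $\cO_S$-linear in $\bar\omega$ after the Frobenius twist. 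Concretely: choose a local section $\tau\in((\Fr_S)_*\Omega^1_S)_{\closed}$ with $C(\tau)$ a chosen generator (in the scalar case a unit multiple of a given local $\Omega^1$-generator); then $C(x\tau) - \tilde\varphi(x\tau) = x\cdot C(\tau) - \tilde\varphi(x\tau)$ and, writing the second term via $\varphi_0$ and the $p$-th-power dependence, solving $C(\omega) - \tilde\varphi(\omega) = \eta$ for $\omega$ of the form (section of $(\fg_1)_{\cF^-}$)$\cdot\tau$ becomes a system of the shape $x - (\text{something }p\text{-linear in }x) = y$, i.e. an Artin–Schreier-type equation over the etale site, which is always solvable etale-locally. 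This is exactly why the claim is about \emph{etale} sheaves and not Zariski ones.

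\textbf{Carrying it out.} So the argument I would write is: (1) reduce to scalar / trivialized $\fg_1$ by the etale-local triangularization above; (2) pick a local lift $\tau$ of a generator of $\Omega^1_S$ under $C$, using surjectivity of the Cartier operator; (3) for a target section $\eta$, write $\eta = x_0\,(\text{generator})$ and set up the equation for $\omega = z\cdot\tau$ with $z$ a section of $(\fg_1)_{\cF^-}$; (4) observe the equation has the form $z - P(z) = (\text{given})$ where $P$ is additive and ``$\Fr$-semilinear'' — an Artin–Schreier / Lang-type equation — hence defines a finite étale cover of $S$, over which a solution exists; since we only need surjectivity as a map of \emph{étale} sheaves, this finishes it. A cleaner alternative, which I would mention, is to identify $\cA$ conceptually (this is Proposition~\ref{p:cB=cA} referenced in \S\ref{sss:remarks about cA}) with the sheaf of horizontal sections / an $H^1$ group attached to a syntomic or crystalline gadget, so that surjectivity of $\tilde C-\tilde\varphi$ becomes the statement that a certain two-term complex has vanishing $H^1$ on the étale site — but for a self-contained proof the Artin–Schreier argument above is the way to go.

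\textbf{Main obstacle.} The crux is step (4): verifying that the operator $C - \tilde\varphi$, which mixes the $\cO_S$-linear surjection $C$ with the $p$-linear (Frobenius-twisted) map $\tilde\varphi$, really produces an Artin–Schreier-type equation and not something more degenerate (e.g. where the ``linear part'' $C$ becomes invisible after passing to the relevant quotient). Concretely one must check that after choosing $\tau$ the coefficient of the \emph{linear-in-$z$} term is a unit, so that the equation $z - (\text{$p$-linear in }z) = \text{given}$ is genuinely of finite-étale type; equivalently, that $\varphi$ does not ``cancel'' the leading behaviour of $C$. This is where the hypothesis that the $p$-operation on $\fg_1$ vanishes — equation \eqref{e:zero p-operation} — and the precise normalization $\Lie(f) = \id$ in \eqref{e:U^+=g_1} get used: they pin down $\varphi$ enough to guarantee the linear part survives. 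Handling the non-semisimple part of $\varphi_0$ (its nilpotent blocks) by induction on the length of the filtration is a routine but necessary bookkeeping step.
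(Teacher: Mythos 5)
The mechanism you are reaching for (the $p$-linear part never interferes with the linear part, so one gets a Lang/Artin--Schreier-type equation solvable étale-locally) is indeed the right one, but two steps of your write-up are defective. First, the étale-local ``triangularization'' of the $p$-linear endomorphism $\varphi$ of $(\fg_1)_{\cF^-}$ does not exist over a general smooth base: the kernels and images of (powers of) a $p$-linear endomorphism of a vector bundle are not subbundles in general (their ranks jump, e.g.\ $z\mapsto xz^p$ on the trivial line bundle over $\bA^1$), and there is no semisimple/nilpotent decomposition in families; so the reduction to the scalar case as you describe it would fail. Fortunately it is also unnecessary: keep the whole bundle, choose local étale coordinates $x_1,\dots,x_n$ and the lifts $\tau_i:=x_i^{p-1}dx_i$ of $dx_i$ under $C$ (a single $\tau$ is not enough unless $\dim S=1$), trivialize $(\fg_1)_{\cF^-}$, and on the twisted $\cO_S$-span of the $\tau_i$ (whose sections $\sum_i z_i^p\tau_i$ are automatically closed) the map $\tilde C-\tilde\varphi$ becomes $z\mapsto L(z)-Q(z)$ with $L$ linear invertible and $Q$ additive involving only $p$-th powers of the $z_i$; this morphism is étale (not finite in general -- the fiber degree can jump, so ``finite étale cover'' should be ``étale surjective'', which is all you need) and, being a homomorphism of vector groups over $S$ with open image and connected fibers, it is surjective, whence sections exist étale-locally.

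Second, your ``main obstacle'' is not an obstacle, and the hypotheses you invoke do not enter. Because $\tilde\varphi$ is $p$-linear, it contributes nothing to the differential: the relative differential of $\tilde C-\tilde\varphi$ is $\tilde C$ whatever $\varphi$ is, so there is no unit condition to check and no possibility of $\varphi$ ``cancelling'' $C$; in particular neither \eqref{e:zero p-operation} nor the normalization $\Lie(f)=\id$ from \S\ref{sss:U^+=g_1} is used here (they are used in the proof of \eqref{e:change of p-Curv}), and indeed the paper notes after Proposition~\ref{p:Artin-Milne} that the same argument proves surjectivity of $\tilde C-\tilde\varphi$ in \eqref{e:Artin-Milne} for an arbitrary height-one group scheme, i.e.\ for an arbitrary $p$-linear $\varphi$. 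The paper's proof of Lemma~\ref{l:surjectivity of C-varphi} is your mechanism made coordinate-free and without any reduction: view $\tilde C-\tilde\varphi$ as a morphism $E_1\to E_2$ of the total spaces of the two bundles; it is a group homomorphism, smooth because its differential is the surjective $\cO_S$-linear map $\tilde C$, hence its image is an open subgroup scheme of $E_2$ and therefore all of $E_2$, and a smooth surjective morphism admits sections étale-locally. If you delete the triangularization step and the spurious appeal to \eqref{e:zero p-operation}, your argument becomes a coordinate version of this.
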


\begin{proof} 
Let $\cE_1:=(\fg_1)_{\cF^-}\otimes ((\Fr_S)_*\Omega^1_S)_{\closed}\,$, $\cE_2:=(\fg_1)_{\cF^-}\otimes  \Omega^1_S$; these are finitely generated locally free $\cO_S$-modules.  Let $E_1,E_2$ be the corresponding vector bundles; these are schemes over $S$ (namely, $E_i$ is the spectrum of the symmetric algebra of $\cE_i^*$). Recall that $\tilde C:\cE_1\to\cE_2$ is a surjective $\cO_S$-linear map and $\tilde\varphi:\cE_1\to\cE_2$ is $p$-linear. So the morphism of $S$-schemes $E_1\overset{\tilde C-\tilde\varphi}\longrightarrow E_2$ is smooth. It is also surjective: indeed, the image is an open subgroup scheme of~$E_2$, so it has to be equal to $E_2$. Therefore every section of $E_2$ etale-locally admits a lift to a section of $E_1$.
\end{proof}

\subsection{Proof of formula~\eqref{e:change of p-Curv}}      \label{ss:change of p-Curv}
A proof will be given in \S\ref{sss:change of p-Curv}; it is parallel to that of Propostion~7.1.2 of \cite{K72}. Let us first recall some facts used in the proof.

\subsubsection{Recollections} 
Let $\BF_p\langle x,y\rangle$ be the free associative $\BF_p$-algebra on $x,y$. Let us define $J\in\BF_p\langle x,y\rangle$ by $J(x,y):=(x+y)^p-x^p-y^p$. 
We have $J=J_1+\ldots +J_{p-1}$, where $J_i$ is homogeneous of degree $i$ in $y$. Jacobson proved that each $J_i$ belongs to the free Lie algebra on $x,y$ (which is a Lie subalgebra of $\BF_p\langle x,y\rangle$) and that

\begin{equation}   \label{e:J_1}
J_1(x,y)=\ad_x^{p-1}(y).
\end{equation}
Hochschild's proof of these results can be found in \cite[p.~199-200]{C58}. Since $J_i$ is a Lie polynomial in $x,y$, which is homogeneous of degree $i$ in $y$, we see that
\begin{equation}   \label{e:J_i for i>1}
J_i\in [\fa ,\fa ] \quad \mbox{ for } i>1,
\end{equation}
where $\fa\subset\BF_p\langle x,y\rangle$ is the Lie subalgebra generated by the elements $\ad_x^j(y)$, $j\ge 0$.

\medskip

We will also need the following coordinate description of the Cartier operator
\[
C:H^0(S,\Omega^1_S)_{\closed}\to H^0(S,\Omega^1_S).
\]

\begin{lem}   \label{l:coordinate description of C}
Let $S$ be a scheme etale over $\Spec \BF_p[x_1,\ldots ,x_n]$ and  
$\omega\in H^0(S,\Omega^1_S)_{\closed}\,$. Write
$\omega=\sum\limits_i f_i\cdot dx_i$, $C(\omega )=\sum\limits_i h_i \cdot dx_i$, where $f_i,h_i\in H^0(S,\cO_S)$. 
 Then
\begin{equation}   \label{e:coordinate description of C}
h_i^p=-\partial_i^{p-1}(f_i), \quad \mbox{where } \partial_i:=\frac{\partial}{\partial x_i}\, .
\end{equation}
\end{lem}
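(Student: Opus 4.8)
The plan is to reduce the general étale-over-affine-space case to the polynomial ring case, and there to verify formula \eqref{e:coordinate description of C} by a direct computation using the classical characterization of the Cartier operator. First I would recall the defining property of $C$: for a smooth $\BF_p$-scheme $S$, the Cartier isomorphism identifies $\Omega^1_{S,\closed}/\Omega^1_{S,\exact}$ with $\Omega^1_S$ (Frobenius-twisted), and $C$ is the induced map on closed forms, characterized by $C(f^p\,dg)=f\,dg$ for local functions $f,g$ together with additivity and the vanishing $C(dh)=0$ for exact forms. Since $C$ is additive (it is $\cO_S$-linear after the Frobenius twist, hence in particular additive) and $\Omega^1_S$ is free with basis $dx_1,\dots,dx_n$ over the étale-over-$\BF_p[x_1,\dots,x_n]$ scheme $S$, it suffices to pin down $C$ on a spanning set of closed forms and check that the proposed formula $h_i^p=-\partial_i^{p-1}(f_i)$ is consistent with it.

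The key step is the local computation. Because $S$ is étale over $\BA^n_{\BF_p}$, the module of closed $1$-forms is spanned (over $\BF_p$, or after $p$-th powers over $\cO_S$) by forms of the type $g^p\,dx_i$ and by exact forms $dh$; indeed any closed form can be written, locally, as a sum of an exact form and forms with coefficients that are $p$-th powers, by solving $\partial$-equations one variable at a time exactly as in the polynomial case (étaleness guarantees $\Omega^1_S=\bigoplus \cO_S\,dx_i$ and that $\partial_i$ behave as the usual partials). On such generators one has $C(g^p\,dx_i)=g\,dx_i$ and $C(dh)=0$. Now it remains to check the formula: write $\omega=\sum_i f_i\,dx_i$ closed. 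The identity $h_i^p=-\partial_i^{p-1}(f_i)$ is precisely the statement that the coefficient of $dx_i$ in $C(\omega)$, raised to the $p$-th power, equals $-\partial_i^{p-1}(f_i)$; this is the classical Cartier formula. To verify it I would note both sides are additive in $\omega$ and that $\partial_i^{p-1}$ annihilates $p$-th powers in variables $x_j$, $j\neq i$, while $\partial_i^{p-1}(g^p\cdot(\text{monomial in }x_i))$ is governed by $\partial_i^{p-1}(x_i^{p-1})=(p-1)!=-1$ in $\BF_p$ (Wilson's theorem); matching this against $C(g^p\,dx_i)=g\,dx_i$ and $C(dh)=0$ (equivalently $\partial_i^{p-1}\partial_i(\cdots)=\partial_i^p(\cdots)=0$ on polynomials, in the relevant graded pieces) yields the claim. Étale base change then propagates the formula from $\BF_p[x_1,\dots,x_n]$ to $S$ since $C$, $\partial_i$, and Frobenius all commute with étale localization.

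The main obstacle I expect is bookkeeping the sign and the combinatorial coefficient correctly: the factor $-1$ comes from $(p-1)!\equiv -1\pmod p$ and it is easy to misplace it, and one must be careful that the formula is stated with $h_i^p$ (a $p$-th power) on the left, reflecting that $C$ is only additive and Frobenius-semilinear, not $\cO_S$-linear in the untwisted sense. A secondary subtlety is justifying that it genuinely suffices to check the identity on the spanning set of closed forms $\{g^p\,dx_i\}\cup\{\text{exact}\}$ — this needs the remark that on a smooth $\BF_p$-algebra every closed form is, locally, a sum of an exact form and forms $g^p\,dx_i$, which is standard but should be cited or given a one-line argument via the Cartier isomorphism.
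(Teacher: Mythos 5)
Your overall strategy coincides with the paper's: check the identity on a generating set of closed $1$-forms, using additivity of both sides, Wilson's theorem for the sign, and $\partial_i^{p}=0$ for the exact forms. However, as written the reduction rests on two incorrect statements, and this is a genuine gap. First, the characterization $C(f^p\,dg)=f\,dg$ is false: in characteristic $p$ one has $f^p\,dg=d(f^pg)$, so $f^p\,dg$ is exact and $C(f^p\,dg)=0$; the correct identity is $C(f^pg^{p-1}\,dg)=f\,dg$ (equivalently, the inverse Cartier map sends $f\,dg$ to the class of $f^pg^{p-1}\,dg$). Second, and consequently, your claimed spanning set $\{g^p\,dx_i\}\cup\{\text{exact forms}\}$ consists entirely of exact forms (again $g^p\,dx_i=d(g^px_i)$), so it does not span the closed $1$-forms, and verifying the formula on it only verifies it on exact forms, where both sides vanish; nothing is learned about a general closed $\omega$. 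The correct generators, which is what the paper uses via Cartier's isomorphism, are the locally exact forms together with $g^p x_i^{p-1}\,dx_i$, on which $C(g^p x_i^{p-1}\,dx_i)=g\,dx_i$ and $\partial_i^{p-1}(g^p x_i^{p-1})=(p-1)!\,g^p=-g^p$ by Wilson's theorem.

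Your later remark that $\partial_i^{p-1}$ of $g^p$ times a monomial in $x_i$ is ``governed by $\partial_i^{p-1}(x_i^{p-1})=(p-1)!=-1$'' shows you have the right local computation in mind, but it is attached to the wrong generator: $C(g^p\,dx_i)=g\,dx_i$ is simply false, and with your spanning set the key step fails. Once you replace $g^p\,dx_i$ by $g^p x_i^{p-1}\,dx_i$ and invoke Cartier's isomorphism for the generation statement, your argument becomes exactly the paper's proof (the detour through the polynomial ring and étale base change is then unnecessary, since the generation statement and the computation can be carried out directly on $S$, locally).
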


This lemma is well known, see formula~(7.1.2.6) of \cite{K72}; moreover, P.~Cartier used \eqref{e:coordinate description of C} to define $C$, see p.~200 and p.~202-203 of \cite{C58}. We prove the lemma for completeness.

\begin{proof}
By Cartier's isomorphism, the space of closed 1-forms 
is generated by locally exact 1-forms and by 
\begin{equation}  \label{e:simple closed 1-form}
g^p\cdot x_i^{p-1}dx_i, \quad g\in H^0(S,\cO_S), 1\le i\le n.
\end{equation}
If $\omega$ is given by \eqref{e:simple closed 1-form} then $C(\omega )=g\cdot dx_i$, and \eqref{e:coordinate description of C} holds because 
\[
\partial_i^{p-1}(g^p\cdot x_i^{p-1})=(p-1)!\cdot g^p=-g^p
\]
 by Wilson's theorem.
On the other hand, if $\omega=\sum\limits_i f_i\cdot dx_i$ is exact then $C(\omega )=0$, and the problem is to show that $\partial_i^{p-1}(f_i)=0$. Indeed, if $\omega=du$ then $f_i=\partial_i (u)$ and 
we have $\partial_i^{p-1}(f_i)=\partial_i^{p}(u)=0$.
\end{proof}

\begin{rem}
Formula~\ref{e:simple closed 1-form} tells us that in the situation of Lemma~\ref{l:coordinate description of C}, the function $\partial_i^{p-1}(f_i)$ is a $p$-th power. Here is a direct proof of this. It suffices to show that $\partial_j\partial_i^{p-1}(f_i)=0$ for all~$j$. But the equality $d\omega=0$ means that $\partial_j(f_i)=\partial_i(f_j)$, so $\partial_j\partial_i^{p-1}(f_i)=\partial_i^p(f_j)=0$.
\end{rem}

\subsubsection{Proof of formula~\eqref{e:change of p-Curv}}      \label{sss:change of p-Curv}
We can assume that $S$ is equipped with an etale morphism to $\Spec \BF_p[x_1,\ldots ,x_n]$. 
Write $\omega\in H^0(S, (\fg_1)_{\cF^+}\otimes\Omega^1_S)_{\closed}$ as
$$\omega =\sum\limits_i f_i\otimes dx_i\, , \mbox{ where } f_i\in H^0(S,(\fg_1)_{\cF^+}).$$ 
We have $(\fg_1)_{\cF^+}=\Fr^*\cE$, where $\cE=(\fg_1)_{\cF^-}$; so $\cE$ is the horizontal part of $\Fr_*(\fg_1)_{\cF^+}$. The problem is
to prove that
\begin{equation}  \label{e:2change of p-Curv}
\pCurv_{\nabla +\omega} -\pCurv_{\nabla}=-\tilde C(\omega ),
\end{equation}
where
$\tilde C:H^0(S, (\fg_1)_{\cF^+}\otimes\Omega^1_S)_{\closed}=H^0(S,\cE\otimes (\Fr_*\Omega^1_S)_{\closed})\to
H^0(S,\cE\otimes \Omega^1_S)$
is induced by $C:(\Fr_*\Omega^1_S)_{\closed}\to\Omega^1_S$. 
Write $\tilde C(\omega )=\sum\limits_i h_i\otimes dx_i$, where   $h_i\in H^0(S,\cE )$, then
Lemma~\ref{l:coordinate description of C} implies that 
\begin{equation} \label{e:coordinate description of tilde C}
\Fr^*(h_i)=-\nabla_i^{p-1}(f_i), \quad \mbox{where }\nabla_i:=\nabla_{\!\!\frac{\partial}{\partial x_i}}.
\end{equation}

Let us now compute the l.h.s. of \eqref{e:2change of p-Curv}.
By \S\ref{sss:U^+=g_1}, $[\fg_1,\fg_1]=0$ and the $p$-operation on $\fg_1$ is zero. So by \eqref{e:J_1}-\eqref{e:J_i for i>1}, we get
$(\nabla_i+f_i)^p-\nabla_i^p=\nabla_i^{p-1}(f_i)$. This means that the l.h.s. of \eqref{e:2change of p-Curv} equals $-\sum\limits_i h_i\otimes dx_i$, where
the $h_i$'s are as in \eqref{e:coordinate description of tilde C}. \qed

\section{Proof of Theorems~\ref{t:1}-\ref{t:2}}   \label{s:proof of main theorems}
In \S\ref{ss:proof of main theorems} we will see that Theorems~\ref{t:1}-\ref{t:2} easily follow from Theorem~\ref{t:torsor} and a result of Artin-Milne \cite{AM}, which we are going to recall now.

\subsection{A result of Artin-Milne}  \label{ss:Artin-Milne}
\subsubsection{The setting}   \label{sss:Artin-Milne setting}
Let $S$ be  a smooth $\BF_p$-scheme and $H$ a finite flat commutative group scheme over $S$. Assume that $H$ has height 1 (i.e., is killed by Frobenius). We have a morphism of sites
$\pi :S_{\fppf}\to S_{\et}$. Artin and Milne described the sheaves $R^q\pi_*H$.

To formulate their result, we need some notation. Let $\fh:=\Lie (H)$; this is a finitely generated locally free $\cO_S$-module. The $p$-operation on $\fh$ is a $p$-linear map $\varphi :\fh\to\fh$.
Similarly to \S\ref{sss:varphi and C}, one defines a $p$-linear map
$ \tilde\varphi :\fh\otimes (\Fr_S)_*\Omega^1_S\to \fh\otimes\Omega^1_S$
and  a surjective $\cO_S$-linear map
$\tilde C :\fh\otimes ((\Fr_S)_*\Omega^1_S)_{\closed}\to\fh\otimes  \Omega^1_S$
(the definition of $\tilde\varphi$ uses $\varphi$, and the definition of $\tilde C$ uses the Cartier operator $C$).

\begin{prop} \label{p:Artin-Milne}
In the situation of \S\ref{sss:Artin-Milne setting} one has a canonical exact sequence 
\begin{equation}   \label{e:Artin-Milne}
0\to R^1\pi_*H\overset{f}\longrightarrow\fh\otimes ((\Fr_S)_*\Omega^1_S)_{\closed}\overset{\tilde C-\tilde\varphi}\longrightarrow\fh\otimes\Omega^1_S\to 0
\end{equation}
of sheaves on $S_{\et}$, which is functorial with respect to $H$ and with respect to base changes $S'\to S$; moreover, $R^q\pi_*H=0$ if $q>1$.
\end{prop}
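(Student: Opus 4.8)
This is a theorem of Artin and Milne \cite{AM}. The plan I would follow is to resolve the fppf sheaf $H$ by smooth group sheaves and then run the Leray spectral sequence for $\pi\colon S_{\fppf}\to S_{\et}$, generalizing the computation of $R\pi_*\alpha_p$ in \S\ref{sss:alpha_p-torsors}. First I would reduce to a convenient local model: since every object and every map in \eqref{e:Artin-Milne} is functorial in $H$ and compatible with flat base change, I may work étale-locally on $S$, and in particular assume $\fh$ free over $\cO_S$. Then I would invoke the equivalence of \cite[Exp.~VIIA]{SGA3}: as $H$ is commutative of height $1$, it is the same datum as the finite locally free $\cO_S$-module $\fh=\Lie(H)$ equipped with its $p$-operation $\varphi\colon\fh\to\fh$ (a Frobenius-semilinear endomorphism; the bracket is zero since $H$ is commutative). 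So it suffices to prove \eqref{e:Artin-Milne} for the height-$1$ group attached to an arbitrary pair $(\fh,\varphi)$.

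The technical heart, and the step I expect to be the main obstacle, is to realize $(\fh,\varphi)$ as $\bigl(\Lie\sL,\ p\text{-operation}\bigr)$ for a smooth affine commutative $S$-group scheme $\sL$, functorially in $(\fh,\varphi)$ and compatibly with base change --- and since everything below is étale-local, it is enough to do this étale-locally. Granting such an $\sL$, one has $H=\Ker(F_{\sL/S})$, and, $F_{\sL/S}\colon\sL\to\sL^{(p)}$ being finite locally free and surjective, a short exact sequence of fppf abelian sheaves
\[
0\longrightarrow H\longrightarrow\sL\xrightarrow{\ F_{\sL/S}\ }\sL^{(p)}\longrightarrow 0 .
\]
For $\varphi=0$ one takes $\sL=\BV(\fh)$, recovering the Artin--Schreier-type sequence underlying the case $\alpha_p\subset\BG_a$ of \S\ref{sss:alpha_p-torsors}; for $\varphi$ invertible one is in the ``$\mu_p\subset\BG_m$'' situation; a rank-one $\varphi$ is handled by $\Spec\cO_S[x,(1+bx)^{-1}]$ with group law $x\ast y=x+y+bxy$, which interpolates between $\BG_a$ and $\BG_m$. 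The difficulty is to carry this out for an arbitrary semilinear $\varphi$ over a base on which $\varphi$ need not split into an invertible part and a nilpotent part; this is precisely what I would borrow from \cite{AM}.

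Granting the resolution, the rest is formal. Because $\sL$ is smooth, every fppf $\sL$-torsor is étale-locally trivial, hence $R^q\pi_*\sL=0$ for $q>0$ and $\pi_*\sL=\underline{\sL}$, and likewise for $\sL^{(p)}$; so the long exact sequence for $R\pi_*$ yields $R^q\pi_*H=0$ for $q>1$, $\pi_*H=\Ker\bigl(F\colon\underline{\sL}\to\underline{\sL^{(p)}}\bigr)=0$ (as $S$ is reduced), and $R^1\pi_*H=\Coker\bigl(F\colon\underline{\sL}\to\underline{\sL^{(p)}}\bigr)$. It then remains to identify this cokernel with $\Ker(\tilde C-\tilde\varphi)\subset\fh\otimes_{\cO_S}((\Fr_S)_*\Omega^1_S)_{\closed}$ and to see that $\tilde C-\tilde\varphi$ is onto. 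This is the ``$d\log$'' computation: the logarithmic derivative $\underline{\sL}\to\fh\otimes_{\cO_S}\Omega^1_S$ identifies $\Coker F$ with the subsheaf of closed $1$-forms on which the Cartier operator agrees with the Frobenius-twist of $\varphi$, and the coordinate description of $C$ from Lemma~\ref{l:coordinate description of C}, applied fibrewise, turns this into the relation $\tilde C(\omega)=\tilde\varphi(\omega)$; the surjectivity of $\tilde C-\tilde\varphi$ follows from the smoothness/open-subgroup argument of Lemma~\ref{l:surjectivity of C-varphi}. Functoriality in $H$ and compatibility with base change are inherited from those of the resolution and of the Cartier operator, and the vanishing $R^{q}\pi_*H=0$ for $q>1$ is part of the long exact sequence above.
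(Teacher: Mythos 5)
Your skeleton (a two-term resolution of $H$ by smooth groups, Leray for $\pi:S_{\fppf}\to S_{\et}$, a $d\log$/Maurer--Cartan identification, and the surjectivity argument of Lemma~\ref{l:surjectivity of C-varphi}) is reasonable, but the step you yourself single out as the technical heart --- realizing, étale-locally on $S$, an arbitrary pair $(\fh,\varphi)$ as the restricted Lie algebra of a smooth affine commutative $S$-group scheme $\sL$ with $H=\Ker(F_{\sL/S})$ --- cannot be ``borrowed from \cite{AM}'', because it is false in general. Take $p\ge 5$, $S=\BA^1=\Spec\BF_p[t]$, $\fh=\cO_S\cdot e$ with $\varphi(e)=t\,e$ (the family interpolating between $\mu_p$ and $\alpha_p$). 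Since $\Ker(F_{\sL/S})$ has order $p^{\dim\sL}$, such an $\sL$ would have relative dimension $1$; over the étale local ring at $t=0$ (still a discrete valuation ring with uniformizer $t$) its special fibre would be $\BG_a$ and its generic fibre a one-dimensional torus, and the smooth affine commutative models of such tori over a strictly henselian discrete valuation ring are the dilatation groups with law $x*y=x+y+bxy$, whose Lie $p$-map is multiplication by $b^{p-1}$ (so $t$-valuation divisible by $p-1$), or models of the nonsplit norm-one torus, whose $p$-map has valuation $\equiv (p-1)/2 \pmod{p-1}$; in no case is the valuation $1$. So the resolution you want does not exist even étale-locally; it does exist after a ramified (fppf) cover such as extracting $t^{1/(p-1)}$, but fppf-locality does not suffice for your computation of $R^q\pi_*$ as étale sheaves.

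This is also not how the paper argues: its proof of the proposition is simply the citation of \cite[Prop.~2.4]{AM}, and the construction of $f$ recalled in \S\ref{sss:Artin-Milne interpreted and generalized} is direct, using no smooth envelope adapted to $(\fh,\varphi)$: since $H$ is killed by Frobenius, an fppf $H$-torsor $E$ gives a canonical trivialization of $\Fr_S^*E$, which carries the canonical $p$-integrable connection; its Maurer--Cartan form is a closed $1$-form in $\fh\otimes(\Fr_S)_*\Omega^1_S$, and the relation $\tilde C(\omega)=\tilde\varphi(\omega)$ expresses the vanishing of its $p$-curvature. If you insist on a resolution-style argument, you can always use the canonical embedding $H\mono\Res_{H^\vee/S}\BG_m$ into a smooth affine commutative group with smooth affine quotient; that does yield $\pi_*H=0$ and $R^q\pi_*H=0$ for $q>1$, but identifying $R^1\pi_*H$ with $\Ker(\tilde C-\tilde\varphi)$ from such a resolution again reduces to the direct torsor/connection computation rather than to a formal $d\log$ step. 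As written, then, your proposal has a genuine gap at its central step.
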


This is Proposition~2.4 of  \cite{AM}. The construction of the map $f$ from \eqref{e:Artin-Milne} will be recalled in \S\ref{sss:Artin-Milne interpreted and generalized}.

\subsubsection{Example}
Let $H=(\alpha_p)_S$. In this case  Propostion~\ref{p:Artin-Milne} says that $$R\pi_*H=((\Fr_S)_*\Omega^1_S)_{\exact}[-1].$$ 
Since $\Fr_S$ induces the identity functor $S_{\et}\to S_{\et}$, we can rewrite this as $R\pi_*H=\Omega^1_{S,\exact}[-1]$. This is well known (and explained at the end of \S\ref{sss:alpha_p-torsors}).

\subsubsection{Remarks}
(i) The equality $R^0\pi_*H=0$ is clear because $S$ is reduced and the reduced part of the fiber of $H$ over each point of $S$ is zero.

(ii) The proof of the equality $R^q\pi_*H=0$ for $q>1$ given in \cite{AM} works for \emph{any} scheme $S$ and \emph{any} finite locally free commutative group scheme over $S$.

(iii) In a particular situation, surjectivity of the map $\tilde C-\tilde\varphi$ from \eqref{e:Artin-Milne} was proved in Lemma~\ref{l:surjectivity of C-varphi}. The same argument works in general.

\subsubsection{Interpretation and noncommutative generalization of \eqref{e:Artin-Milne}}  \label{sss:Artin-Milne interpreted and generalized}
An element of $H^1_{\fppf}(S,H)$ is an isomorphism class of a principal $H$-bundle $E\to S$. Since $H$ is killed by Frobenius, the geometric Frobenius $F:E\to\Fr_S^*E$ factors as $E\to S\overset{\sigma}\longrightarrow\Fr_S^*E$. The section $\sigma$ trivializes the bundle $\Fr_S^*E$. On the other hand, any $\Fr_S$-pullback (e.g., $\Fr_S^*E$) is equipped with a $p$-integrable connection. A connection 
$\nabla$ on the trivialized $\Fr_S^*H$-bundle $\Fr_S^*E$ is given by an element $\omega\in H^0(S, \Fr_S^*\fh\otimes\Omega^1_S)=H^0(S, \fh\otimes(\Fr_S)_*\Omega^1_S)$. Moreover, integrability of 
$\nabla$ is equivalent to the Maurer-Cartan equation for $\omega$; since $H$ is commutative, this equation just means that 
$\omega\in H^0(S, \fh\otimes ((\Fr_S)_*\Omega^1_S))_{\closed})$.
Thus we get a map 
$$H^1_{\fppf}(S,H)\to H^0(S, \fh\otimes ((\Fr_S)_*\Omega^1_S))_{\closed}).$$ 
This map and similar maps for all schemes etale over $S$ give rise to the map $f$ from \eqref{e:Artin-Milne}. (This is essentially the description of $f$ from \cite[\S 2.7]{AM} although the word ``connection'' is not used there).

It is easy to check\footnote{The verification is parallel to \S\ref{sss:change of p-Curv}; the only difference is that the $p$-operation on $\fh$ is not assumed to be zero, while the restricted Lie algebra $(\fg_1)_{\cF^+}$ from \S\ref{sss:change of p-Curv} has zero $p$-operation.} that the inclusion $\im f\subset\Ker (\tilde C-\tilde\varphi )$ (which is a part of Proposition~\ref{p:Artin-Milne}) just means that the $p$-curvature of $\nabla$ is zero. So the exact sequence \eqref{e:Artin-Milne} essentially says that \emph{$H^1_{\fppf}(S,H)$ canonically identifies (via $\Fr_S$-pullback) with the set of $p$-integrable connections on the trivial $H$-bundle on $S$.} One can prove that this statement remains valid without assuming $H$ commutative; we will not need this fact.

\subsection{The classifying stack of $\Lau_1^G$}   \label{ss:BLau_1}
\subsubsection{The sheaf $\cB$}   \label{sss:cB}
Let $\Sch/\Disp_1^G$ be the category of pairs $(S,f)$, where $S$ is a scheme and $f:S\to\Disp_1^G$ is a morphism. Let $\Sm/\Disp_1^G$ be the full subcategory of $\Sch/\Disp_1^G$ formed by pairs 
$(S,f)$ such that $S\in\Sm_{\BF_p}$. We equip $\Sch/\Disp_1^G$ and $\Sm/\Disp_1^G$ with the etale topology.

In \S\ref{sss:Lau_1} we defined a commutative finite flat group scheme $\Lau_1^G$ over $\Disp_1^G$; this group scheme has height 1.
By definition, the classifying stack of $\Lau_1^G$ is the stack of Picard groupoids on $\Sch/\Disp_1^G$ whose sections over $(S,f)\in\Sch/\Disp_1^G$ are $(f^*\Lau_1^G)$-torsors.
Let $\cB$ be the restriction of this stack to $\Sm/\Disp_1^G$. For any $(S,f)\in\Sm/\Disp_1^G$ one has $H^0(S,f^*\Lau_1^G)=0$ (because $S$ is reduced), so the groupoid of sections of $\cB$ over $(S,f)$ is discrete. Therefore $\cB$ is just a sheaf of abelian groups.

\subsubsection{Remark}
Restricting from $\Sch/\Disp_1^G$ to $\Sm/\Disp_1^G$ does not lead to loss of information: this follows from Lemma~\ref{l:easy lemma} and the fact that the classifying stack of $\Lau_1^G$ is smooth over 
$\BF_p$. The latter follows from \S\ref{sss:Classifying stacks} of Appendix~\ref{s:gerbe generalities} because by \S\ref{sss:Disp_1^G as a quotient}, $\Disp_1^G$ is smooth 
over~$\BF_p$.

\begin{prop}    \label{p:cB=cA}
The sheaf $\cB$ from \S\ref{sss:cB} is canonically isomorphic to the sheaf $\cA$ from~\S\ref{sss:cA}.
\end{prop}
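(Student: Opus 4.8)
The plan is to identify both $\cA$ and $\cB$ with the sheaf $R^1\pi_*(\Lau_1^G)$ on $\Sm/\Disp_1^G$ and then observe that these identifications are compatible. First I would recall, via \S\ref{sss:cB}, that since every $S\in\Sm_{\BF_p}$ is reduced and $\Lau_1^G$ is infinitesimal, $H^0(S,f^*\Lau_1^G)=0$, so the Picard groupoid of $(f^*\Lau_1^G)$-torsors has trivial automorphisms and $\cB(S,f)=H^1_{\fppf}(S,f^*\Lau_1^G)$. Thus $\cB$ is precisely the sheafification over $\Sm/\Disp_1^G$ of $(S,f)\mapsto H^1_{\fppf}(S,f^*\Lau_1^G)$, i.e. $\cB=R^1\pi_*(\Lau_1^G)$ where $\pi:(\Sm/\Disp_1^G)_{\fppf}\to(\Sm/\Disp_1^G)_{\et}$ is the canonical morphism (one has to note that forming $R^1\pi_*$ commutes with the base changes $(S',f')\to(S,f)$, which follows from the last clause of Proposition~\ref{p:Artin-Milne}, or rather its relative version applied fiberwise over $\Disp_1^G$).

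Next I would apply Proposition~\ref{p:Artin-Milne} fiberwise. Fix $(S,f)\in\Sm/\Disp_1^G$; then $H:=f^*\Lau_1^G$ is a finite flat commutative group scheme over $S$ of height $1$, and by construction in \S\ref{sss:Lau_1} its Lie algebra is $\fh=(\fg_1)_{\cF^-}$ with $p$-operation the map $\varphi$ of \S\ref{sss:varphi and C} (this is exactly the content of the diagram \eqref{e:p-operation2} together with the recipe ``restricted Lie algebra $\mapsto$ height-$1$ group scheme''). Proposition~\ref{p:Artin-Milne} then gives a canonical exact sequence
\begin{equation*}
0\to R^1\pi_*H\xrightarrow{\ f\}(\fg_1)_{\cF^-}\otimes((\Fr_S)_*\Omega^1_S)_{\closed}\xrightarrow{\ \tilde C-\tilde\varphi\ }(\fg_1)_{\cF^-}\otimes\Omega^1_S\to 0,
\end{equation*}
where $\tilde\varphi$ and $\tilde C$ are built from $\varphi$ and from the Cartier operator exactly as in \S\ref{sss:varphi and C}. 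Comparing with the definition \eqref{e:cA} of $\cA$, we see that $\cA(S,f)=\Ker(\tilde C-\tilde\varphi)=\im f=(R^1\pi_*H)(S)$, and this identification is natural in $(S,f)$ because the exact sequence \eqref{e:Artin-Milne} is functorial in $H$ and in base change. Hence $\cA\cong R^1\pi_*(\Lau_1^G)$ canonically.

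Combining the two identifications gives $\cB\cong R^1\pi_*(\Lau_1^G)\cong\cA$. The one point that requires a little care — and is the main thing to check rather than a genuine obstacle — is that the isomorphism $\cB\cong R^1\pi_*H$ coming from ``$H^1_{\fppf}$ of an infinitesimal group scheme, sheafified on the smooth site'' agrees with the map $f$ of Proposition~\ref{p:Artin-Milne} on the nose; this is precisely the interpretation of $f$ recalled in \S\ref{sss:Artin-Milne interpreted and generalized} (a class in $H^1_{\fppf}(S,H)$, being killed by Frobenius, yields after $\Fr_S$-pullback a trivialized bundle with a $p$-integrable connection, i.e. a closed element $\omega\in H^0(S,\fh\otimes((\Fr_S)_*\Omega^1_S)_{\closed})$), so no independent verification is needed. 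I would therefore organize the proof as: (1) reduce $\cB$ to $R^1\pi_*(\Lau_1^G)$; (2) quote Proposition~\ref{p:Artin-Milne} and match its exact sequence with \eqref{e:cA} to get $\cA\cong R^1\pi_*(\Lau_1^G)$; (3) conclude, noting functoriality throughout so that the resulting isomorphism $\cB\cong\cA$ is canonical.
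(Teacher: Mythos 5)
Your proposal is correct and is essentially the paper's own argument: the paper proves Proposition~\ref{p:cB=cA} by exactly this combination of Proposition~\ref{p:Artin-Milne} with the definitions of $\cA$ and $\Lau_1^G$, and your write-up simply spells out the identification $\cB(S,f)=H^1_{\fppf}(S,f^*\Lau_1^G)=H^0(S,R^1\pi_*(f^*\Lau_1^G))=\Ker(\tilde C-\tilde\varphi)=\cA(S,f)$ together with its functoriality. No substantive difference from the paper's route.
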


\begin{proof}
Follows from Proposition~\ref{p:Artin-Milne} and the definitions of $\cA$ and $\Lau_1^G$.
\end{proof}

\subsection{Proof of Theorems~\ref{t:1}-\ref{t:2}}   \label{ss:proof of main theorems}
In \S\ref{sss:the sheaf cT} we introduced the notation $\cT$ for $\BT_{1,\BF_p}^G$ viewed as a sheaf of sets on $\Sm/\Disp_1^G$. The problem is to show that $\cT$ is locally isomorphic to the sheaf $\cB$ from \S\ref{sss:cB}. By Proposition~\ref{p:cB=cA}, $\cB=\cA$. By Theorem~\ref{t:torsor}, $\cT$ is locally isomorphic to $\cA$. \qed

\appendix

\section{Generalities on stacks} \label{s:stacky nonsense}
Let $P$ be a smooth-local property of schemes. Let $\Sch$ be the category of all schemes and $\Sch_P$ the category of schemes satisfying $P$; we equip $\Sch$ and $\Sch_P$ with the etale topology.
Let $\Stacks_P$ be the (2,1)-category of algebraic stacks satisfying $P$ (this makes sense because $P$ is smooth-local).
Let $\Grpds$ be the (2,1)-category of groupoids.

Given a stack $\sY:\Sch^{\op}\to\Grpds$, let $\sY_P$ denote the restriction of $\sY$ to $\Sch_P^{\op}$.

\begin{prop}   \label{p:stacky Yoneda}
Let $\sX\in\Stacks_P$. Then the functor $\Mor (\sX,\sY )\to\Mor (\sX_P,\sY_P )$ is an equivalence for any stack $\sY :\Sch^{\op}\to\Grpds$.
\end{prop}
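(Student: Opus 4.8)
The statement is a sheaf-theoretic form of Yoneda: an algebraic stack satisfying a smooth-local property $P$ is determined, together with all maps out of it, by its restriction to the site $\Sch_P$ of schemes satisfying $P$. The plan is to produce an explicit inverse to the restriction functor $\Mor(\sX,\sY)\to\Mor(\sX_P,\sY_P)$ using a smooth atlas of $\sX$.

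First I would choose a smooth surjection $U\to\sX$ with $U$ a scheme; since $P$ is smooth-local and $\sX\in\Stacks_P$, we may arrange $U\in\Sch_P$, and likewise the fiber products $U\times_{\sX}U$, $U\times_{\sX}U\times_{\sX}U$ lie in $\Sch_P$ (they are smooth over $U$, hence over something in $\Sch_P$, hence satisfy $P$). Thus $\sX$ is the colimit (geometric realization) in stacks of the Čech nerve $U_\bullet$ of $U\to\sX$, and this entire simplicial diagram lives in $\Sch_P$. Given a morphism $g\colon\sX_P\to\sY_P$, restrict it along $U_\bullet\to\sX_P$ to obtain a compatible family of morphisms $U_n\to\sY$ (for $n=0,1,2$, with the cocycle/coherence data), i.e. a morphism from the nerve $U_\bullet$ to $\sY$ in the $\infty$- (or $(2,1)$-) categorical sense. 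Since $\sY$ is a stack on $\Sch$ with the étale topology and $U\to\sX$ is in particular an étale-locally split (smooth, hence admitting étale-local sections) cover, descent for $\sY$ along $U_\bullet$ produces a unique morphism $\tilde g\colon\sX\to\sY$ restricting to the given data on $U_\bullet$. One then checks $(\tilde g)_P=g$: both restrict compatibly to $U_\bullet$, and since $U_\bullet\to\sX_P$ is an effective epimorphism of stacks on $\Sch_P$ (again because $\sX$ satisfies $P$ and $U\to\sX$ is smooth surjective with target and source in $\Sch_P$), a map out of $\sX_P$ is determined by its restriction to $U_\bullet$. Conversely, starting from a morphism $f\colon\sX\to\sY$, the map $\widetilde{f_P}$ reconstructed this way agrees with $f$ by the same descent-uniqueness on $\Sch$. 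Finally, the analogous argument one categorical level up shows the functor is fully faithful on $2$-morphisms: a $2$-morphism between maps $\sX\to\sY$ is a map $\sX\to\sY^{\Delta^1}$ (or a compatible family over $U_\bullet$), and the same descent applies.

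I expect the main obstacle to be bookkeeping rather than conceptual: one must verify carefully that all terms of the Čech nerve of a smooth atlas of an object of $\Stacks_P$ again lie in $\Sch_P$ (this is exactly where smooth-locality of $P$ is used, and it is the only place), and that $U_\bullet\to\sX$ is an effective epimorphism \emph{both} as stacks on $\Sch$ \emph{and} as stacks on $\Sch_P$ — the latter because $\Sch_P\hookrightarrow\Sch$ need not be stable under all the constructions one might naively want, so one must argue via the smooth-local nature of both the topology and of effectivity. Once these two points are in place, the descent statements for $\sY$ (a stack on $\Sch$) and the gluing of morphisms and $2$-morphisms are formal. It may be cleanest to phrase the whole argument as: restriction along the dense inclusion $\Sch_P\hookrightarrow\Sch$ induces an equivalence between the category of stacks on $\Sch$ and the category of stacks on $\Sch_P$ \emph{that happen to be algebraic-and-$P$ on the nose is not needed} — rather, one just needs that $\sX$, as a stack on $\Sch$, is the left Kan extension (sheafified) of $\sX_P$, which holds because $\sX$ admits an atlas in $\Sch_P$; this is the content hinted at in Remark~\ref{r:sheafified Kan extension}, and I would make it precise as a separate lemma and then deduce the Proposition from it together with the universal property of sheafification.
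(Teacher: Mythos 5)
Your overall strategy is the same as the paper's: choose a smooth atlas, form its \v{C}ech nerve, and use descent (totalization) for $\sY$ to reduce the statement to ordinary Yoneda on $\Sch_P$. However, there is one step as written that fails in general and is exactly the point the paper is careful about: you assert that the fiber products $U\times_{\sX}U$ and $U\times_{\sX}U\times_{\sX}U$ lie in $\Sch_P$. For an algebraic stack in the sense of \cite{LM} the diagonal is only representable by algebraic spaces, so the terms of the nerve are in general algebraic \emph{spaces} satisfying $P$, not schemes; your smoothness argument shows they inherit $P$, but not that they are schemes. Your proof as written would only cover stacks whose diagonal is schematic (e.g.\ affine diagonal, which happens to hold for the stacks of interest in this paper but is not assumed in the Proposition).

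The fix is the paper's two-step reduction: after writing $\Mor(\sX,\sY)=\Tot(\Mor(X_\bullet,\sY))$ and likewise over $\Sch_P$, one has only reduced to the case where $\sX$ is an algebraic space with property $P$; one then repeats the argument with a scheme atlas of that algebraic space, whose nerve now does consist of schemes (the diagonal of an algebraic space is schematic), and concludes by Yoneda. If you incorporate this iteration, your argument goes through; your closing reformulation via the sheafified left Kan extension is the content of Proposition~\ref{p:sheafified Kan extension}, which in the paper is deduced from (rather than used to prove) the present Proposition, but the two are indeed equivalent packagings.
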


\begin{proof}
Let $\pi :X\to\sX$ be a smooth surjective morphism with $X$ being a scheme. Let $X_{\bullet}$ be the  \v {C}ech nerve of $\pi$. Note that each $X_n$ is an algebraic space with property $P$.
We have
\[
\Mor (\sX,\sY )=\Tot(\Mor (X_{\bullet},\sY )), \quad  \Mor (\sX_P,\sY_P )=\Tot(\Mor ((X_{\bullet})_P,\sY_P )).
\]
Thus we have reduced the problem to the case where $\sX$ is an algebraic space.

Running the same argument again, we reduce to the case where $\sX\in\Sch_P$, which is covered by Yoneda's lemma.
\end{proof}

Our next goal is to reformulate Proposition~\ref{p:stacky Yoneda} in terms of a certain morphism of toposes.
Our approach is influenced by \S 4.10 of Expos\'e IV of \cite{SGA4} and by the more sophisticated Proposition~A.0.4 of \cite{EHKSY} (which goes back to A.~Mathew and is about \emph{derived} stacks).

Let $\widetilde{\Sch}$ (resp.~$\widehat{\Sch}$) be the category of presheaves (resp.~sheaves) of groupoids on $\Sch$. Similaly, we have
$\widetilde{\Sch_P}$ and $\widehat{\Sch_P}$. Let $\tilde g_*:\widetilde{\Sch}\to\widetilde{\Sch_P}$, $\hat g_*:\widehat{\Sch}\to\widehat{\Sch_P}$ be the restriction functors.
They have left adjoints $\tilde g^*$ and $\hat g^*$. The functor $\tilde g^*:\widetilde{\Sch_P}\to\widetilde{\Sch}$ is the \emph{left Kan extension}.
The functor $\hat g^*:\widehat{\Sch_P}\to\widehat{\Sch}$ is the \emph{sheafified left Kan extension}, i.e., the composition
\[
\widehat{\Sch_P}\mono\widetilde{\Sch_P}\overset{\tilde g^*}\longrightarrow\widetilde{\Sch}\to\widehat{\Sch}.
\]
Each of the adjoint pairs $(\tilde g^*,\tilde g_*)$ and $(\hat g^*,\hat g_*)$ defines a morphism of toposes.

Since the functor $\Sch_P\to\Sch$ is fully faithful, so is $\tilde g^*$; equivalently, the unit of the adjunction $\id\to\tilde g_*\tilde g^*$ is an isomorphism. 
The same is true for $\hat g^*$. For sheaves of sets, this is \cite[Tag 00XT]{Sta}. For sheaves of groupoids, one can argue as follows: if $\cF\in\widehat{\Sch_P}$ then $\hat g_*\hat g^*(\cF)$ is the restriction of the sheafification of $\tilde g^*\cF$, which equals\footnote{Sheafification commutes with restriction by part 1 of \cite[Prop.~7.1]{CM}. Probably this can also be proved by interpreting $\widehat{\Sch}$ and $\widehat{\Sch_P}$ as explained in part 2 of Exercise 4.10.6 of Expos\'e IV of \cite{SGA4} (i.e., a sheaf on $\Sch$ is just a collection of sheaves $\cF_S$ on $S_{\et}$ for all $S\in\Sch$ plus certain morphisms relating the sheaves $\cF_S$ with each other).} the sheafification of the restriction of $\tilde g^*\cF$, i.e., the sheafification of $\cF$, i.e., $\cF$ itself.

Now we can reformulate Proposition~\ref{p:stacky Yoneda} as follows.

\begin{prop}   \label{p:sheafified Kan extension}
If $\sX\in\Stacks_P$ then the canonical morphism 
$$\hat g^*\sX_P=\hat g^*\hat g_*\sX\to\sX$$
 is an isomorphism. Equivalently, $\sX$ belongs to the essential image of the fully faithful functor~$\hat g^*$. \qed
\end{prop}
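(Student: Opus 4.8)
The statement to prove is Proposition~\ref{p:sheafified Kan extension}: for $\sX\in\Stacks_P$, the canonical morphism $\hat g^*\hat g_*\sX\to\sX$ is an isomorphism, equivalently $\sX$ lies in the essential image of $\hat g^*$.

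The plan is to derive this directly from Proposition~\ref{p:stacky Yoneda} by a formal adjunction argument, exactly as the phrase "Now we can reformulate" suggests. First I would record the two relevant facts already established in the text: (a) Proposition~\ref{p:stacky Yoneda} says that for $\sX\in\Stacks_P$ and any stack $\sY$, restriction induces an equivalence $\Mor(\sX,\sY)\iso\Mor(\sX_P,\sY_P)=\Mor(\hat g_*\sX,\hat g_*\sY)$; and (b) the adjunction $(\hat g^*,\hat g_*)$ gives $\Mor(\hat g^*\hat g_*\sX,\sY)\simeq\Mor(\hat g_*\sX,\hat g_*\sY)$ functorially in $\sY\in\widehat{\Sch}$. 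Composing, I get that for $\sX\in\Stacks_P$ the counit map $\varepsilon_\sX\colon\hat g^*\hat g_*\sX\to\sX$ induces, for every sheaf of groupoids $\sY$ on $\Sch$, a bijection (equivalence of groupoids) $\Mor(\sX,\sY)\iso\Mor(\hat g^*\hat g_*\sX,\sY)$ given by precomposition with $\varepsilon_\sX$. By the Yoneda lemma in the $(2,1)$-category $\widehat{\Sch}$, a morphism that induces an equivalence on all mapping groupoids out of it (into every object) is an isomorphism; hence $\varepsilon_\sX$ is an isomorphism.

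Then I would note the "equivalently" clause: saying $\varepsilon_\sX$ is an isomorphism for all $\sX\in\Stacks_P$ is precisely the assertion that every $\sX\in\Stacks_P$ is in the essential image of $\hat g^*$, since conversely for any $\cF\in\widehat{\Sch_P}$ one has $\hat g_*\hat g^*\cF\simeq\cF$ (the unit is an isomorphism because $\hat g^*$ is fully faithful, as recalled in the paragraph preceding the proposition), so $\hat g^*\cF\simeq\hat g^*\hat g_*(\hat g^*\cF)$ and the counit exhibits the essential image as exactly those $\sX$ on which $\varepsilon_\sX$ is invertible. I would state these compatibilities carefully but not belabor them; the one genuine check is that the map appearing in Proposition~\ref{p:stacky Yoneda} is indeed the restriction along $\varepsilon_\sX$ under the adjunction identification, i.e. that "restrict a morphism to $\Sch_P$" corresponds under $(\hat g^*,\hat g_*)$-adjunction to "precompose with the counit". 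This is a standard triangle-identity manipulation.

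The main obstacle, such as it is, is purely bookkeeping at the level of $(2,1)$-categories: one must make sure the "Yoneda lemma" invoked is the $2$-categorical one for sheaves of groupoids (a morphism inducing equivalences $\Mor(-,\sY)$ for all $\sY$ is an equivalence), and that the adjunction and the equivalence of Proposition~\ref{p:stacky Yoneda} are compatible up to coherent $2$-isomorphism rather than on the nose. Since $\widehat{\Sch}$ is a $(2,1)$-topos all these coherences are automatic, so the proof is genuinely short; in the write-up I would simply chain the two displayed natural equivalences and conclude. No serious analysis is needed — this is why the paper presents it as a mere reformulation.
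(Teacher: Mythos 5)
Your argument is correct and is exactly the reformulation the paper intends: the paper attaches \qed to the statement because, as you show, the counit being an isomorphism is just Proposition~\ref{p:stacky Yoneda} rewritten through the $(\hat g^*,\hat g_*)$-adjunction plus Yoneda, with the ``equivalently'' clause following from the already-recorded full faithfulness of $\hat g^*$ via a triangle identity. Nothing is missing; your spelled-out compatibility check (restriction corresponds to precomposition with the counit) is the same routine verification the paper leaves implicit.
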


Proposition~\ref{p:sheafified Kan extension} tells us how to reconstruct $\sX\in\Stacks_P$ from $\sX_P$: namely, $\sX=\hat g_*\sX_P$, i.e., 
$\sX$ is the sheafified left Kan extension of $\sX_P$.

\section{Recollections on gerbes}   \label{s:gerbe generalities}
\subsubsection{Classifying stacks}   \label{sss:Classifying stacks} 
Let $H$ be a flat affine group scheme of finite presentation over an algebraic stack $\sX$. Then the classifying stack $BH$ is known to be an algebraic stack smooth over $\sX$:
indeed, we can assume that $\sX$ is a scheme, in which case the statement is proved in \cite[Tag 0DLS]{Sta} using \cite[Tag 05B5]{Sta} and  a deep theorem of M.~Artin, see
\cite[Tag 06FI]{Sta}. (On the other hand, if $H$ is finite over $\sX$, one can give an elementary constructive proof similar to the one given in \S\ref{sss:alpha_p-torsors} in the case $H=\alpha_p$.)

\subsubsection{Gerbes}   \label{sss:gerbes are smooth} 
Let $f:\sY\to\sX$ be a morphism of finite presentation between algebraic stacks. If $f$ is an fppf gerbe then $f$ is smooth.
Indeed, smoothness can be checked fppf-locally on $\sX$, so we can assume that the gerbe is trivial. Then we can apply \S\ref{sss:Classifying stacks}.

\subsubsection{Remark}    \label{sss:gerbes are etale-locally trivial} 
Let $f:\sY\to\sX$ be as in \S\ref{sss:gerbes are smooth}. Since $f$ is smooth and surjective, it admits a section locally for the \emph{smooth} topology of $\sX$. 
This implies that if $\sX$ is a scheme then $f$ admits a section etale-locally on $\sX$.

\section{A definition of $\BT_n^G$ via syntomification}   \label{s:BT_n^G via syntomification}
Let $n\in\BN$ and $G\in\Shim_n\,$, where $\Shim_n$ is as in \S\ref{ss:Shim_n}. For every derived $p$-adic formal scheme $S$, we 
define in \S\ref{ss:BT_n^G via syntomification} an $\infty$-groupoid $\BT_n^G(S)$. The assignment $S\mapsto\BT_n^G(S)$ is an etale sheaf.  
Conjecture~\ref{conj:algebraicity} says that for each $m\in\BN$ the restriction of $\BT_n^G$ to the category of derived schemes over $\BZ/p^m\BZ$ is a smooth algebraic stack over $\BZ/p^m\BZ$.

\subsection{Recollections on syntomification}    \label{ss:Recollections on syntomification}
\subsubsection{The stack $S^\prism$}   \label{sss:S^prism}
Bhatt and Lurie \cite{BL2} define a \emph{prismatization} functor $S\mapsto S^\prism$ from the category of derived $p$-adic formal schemes to the category of fpqc-stacks of $\infty$-groupoids on the category of $p$-nilpotent\footnote{This means that $p$ is Zariski-locally nilpotent.} derived schemes. The stacks $S^\prism$ are not very far from being algebraic in the sense of Definition 2.3.5 of \cite{Prismatization} (whose essential point is that the quotient of a scheme over a ring $R$ by an action of a flat affine group scheme $H$  over $R$ is considered to be an algebraic stack even if $H$ has infinite type). 

The above words ``not very far'' are necessary for two reasons. First, the derived setting is not considered in \cite{Prismatization}. Second, already the stacks $(\Spf\BZ_p)^\prism$ and $(\Spec \BZ/p^n\BZ)^\prism$ are  \emph{formal} (in the sense of \cite[\S 2.9.1]{Prismatization}) rather than algebraic; e.g., $(\Spec \BF_p)^\prism=\Spf\BZ_p$.

\subsubsection{The stacks $S^\cN$, and $S^{\Syn}$}   \label{sss:S^Syn}
Bhatt defines in his lecture notes \cite{B} the \emph{(Nygaard-)filtered prismatization} functor $S\mapsto S^\cN$. As before, this is a functor from the category of derived $p$-adic formal schemes to the category of fpqc-stacks of $\infty$-groupoids on the category of $p$-nilpotent derived schemes. However, for pedagogical reasons, Bhatt assumes in \cite{B} that $S$ is a classical\footnote{``Classical'' means ``not really derived''.} scheme, and he defines in \cite{B} only the restriction of the stack $S^\cN$ to the category of classical $p$-nilpotent schemes. As before, the stacks $S^\cN$ are not very far from being algebraic in the sense of \cite{Prismatization}.

 The stack $S^\cN$ has two open substacks canonically isomorphic to $S^\prism$. Gluing them together, Bhatt gets a stack which is called the \emph{syntomification} of $S$ and denoted by $S^{\Syn}$. The assignment $S\mapsto S^{\Syn}$ is a functor. By definition,
 one has a natural morphism $$S^\cN\to S^{\Syn}.$$
 
There is a canonical line bundle on $(\Spf\BZ_p)^{\Syn}$ called the Breuil-Kisin twist. It defines a morphism $(\Spf\BZ_p)^{\Syn}\to B\BG_m$, where $B\BG_m$ is the classifying stack of $\BG_m$.
By functoriality, it induces a morphsim $S^{\Syn}\to B\BG_m$ for any $S$. The corresponding line bundle on $S^{\Syn}$ is denoted by $\cO_{S^{\Syn}}\{ 1\}$ or $\cO\{ 1\}$.

\subsubsection{The canonical morphism $S\times B\BG_m\to S^\cN$}   \label{sss:S times BG_m to S^N}
There is a canonical morphism 
\begin{equation}  \label{e:S times BG_m to  S^N}
S\times B\BG_m\to S^\cN
\end{equation}
over $B\BG_m$; namely, \eqref{e:S times BG_m to S^N} is the composite map
\[
S\times\{ 0\}/\BG_m\mono S\times\BA^1/\BG_m\to S^{dR,+}\to S^\cN,
\]
where $S^{dR,+}$ and the maps $S\times\BA^1/\BG_m\to S^{dR,+}\to S^\cN$ are defined in \cite[ \S 5.3.13]{B}.

We need the morphism \eqref{e:S times BG_m to S^N} only if $S=\Spec k$, where $k$ is a perfect field. In this case it can be described as follows. 
It is known\footnote{See \cite[\S 3.3 and  \S 5.4]{B}.} that 
\[
(\Spec k)^\cN\otimes\BF_p=(\Spec k[u,t]/(ut))/\BG_m\, ,
\]
 where the $\BG_m$-action is such that $\deg t=1$, $\deg u=-1$; so the closed substack of the stack
 $(\Spec k)^\cN\otimes\BF_p$ given by $t=u=0$ equals $\Spec k\times B\BG_m$. (Let us note that this closed substack is called the \emph{Hodge locus.})

\subsubsection{On notation}
The definition of $S^\cN$ and  $S^{\Syn}$ is sketched in \cite[\S 1.7]{Prismatization}, but the notation is different there ($S^\prismp$ instead of $S^\cN$ and $S^\prismpp$ instead of $S^{\Syn}$).
In \cite{BL,BL2} the stack $S^\prism$ was denoted by $\WCart_S\,$.

\subsection{Definition of $\BT_n^G(S)$}  \label{ss:BT_n^G via syntomification}
Let $S$ be a derived $p$-adic formal scheme. Let $G\in\Shim_n\,$. 

\subsubsection{The group scheme $G_{\cO\{ 1\}}$}  \label{sss:BK-twist of G}
Our $G$ is a group scheme over $\BZ/p^n\BZ$. Twisting $G$ by the canonical $\BG_m$-torsor on $B\BG_m\otimes \BZ/p^n\BZ $, one gets a group scheme over $B\BG_m\otimes \BZ/p^n\BZ $, which we 
denote by $G_{\cO\{ 1\}}$. One has
\[
\Lie (G_{\cO\{ 1\}})=\bigoplus_i\fg_i\otimes\cO\{ i\},
\]
where $\fg_i$ is the $i$-th graded component of $\Lie (G)$ and 
$\cO\{ i\}$ is the $i$-th tensor power of the canonical line bundle on $B\BG_m\otimes \BZ/p^n\BZ$.

Since $S^{\Syn}\otimes\BZ/p^n\BZ$ is equipped with a morphism to $B\BG_m\otimes \BZ/p^n\BZ $, we can consider $G_{\cO\{ 1\}}$-torsors on $S^{\Syn}\otimes\BZ/p^n\BZ$.

\subsubsection{Definition}   \label{sss:BT_n^G via syntomification}
For any derived $p$-adic formal scheme $S$ of finite type\footnote{``Finite type'' means that the classical truncation of $S\otimes\BF_p$ has finite type in the usual sense.
For the motivation of the finite type precaution, see \S\ref{sss:Good news}(ii) below.} over $\Spf\BZ_p$, let $\BT_n^G(S)$ be the $\infty$-groupoid of $G_{\cO\{ 1\}}$-torsors $\cE$ on $S^{\Syn}\otimes\BZ/p^n\BZ$ such that for every geometric point $\alpha :\Spec k\to S$, the pullback of $\cE$ via the composite morphism
\begin{equation}   \label{e:punctual triviality}
\Spec k\times B\BG_m\to (\Spec k )^\cN\otimes\BF_p\overset{\alpha_*}\longrightarrow S^\cN\otimes\BF_p\mono S^\cN\otimes\BZ/p^n\BZ\to S^{\Syn}\otimes\BZ/p^n\BZ
\end{equation}
is trivial. The first arrow in \eqref{e:punctual triviality} was described in \S\ref{sss:S times BG_m to S^N}.

\subsubsection{Remarks}   \label{sss:H^1(G_m, G)}  
(i) The isomorphism class of the above-mentioned pullback is an element $\nu_\alpha\in H^1((\BG_m)_k, G_k)$, where $G_k$ is the base change of $G$ to $k$, and we want $\nu_\alpha$ to be zero for all geometric points $\alpha$. One can show that if $S$ is connected then it suffices to check the condition $\nu_\alpha=0$ for a single $\alpha$.

(ii) $H^1((\BG_m)_k, G_k)$ is the set of $G(k)$-conjugacy classes of splittings for the canonical epimorphism $(\BG_m)_k\ltimes G_k\epi (\BG_m)_k$. The latter set depends only on the quotient of $G_k$ by its unipotent radical  (together with the action of $\BG_m$ on this quotient).

(iii) The definition of $\BT_n^G(S)$ from \S\ref{sss:BT_n^G via syntomification} makes sense without assuming the action of $\BG_m$ on $G$ to be 1-bounded. But the conjecture formulated below is \emph{unlikely to hold without this assumption.}

\subsection{The conjecture}  
For each $n\in\BN$ and $G\in\Shim_n$ we have defined a contravariant functor $S\mapsto \BT_n^G(S)$, where $S$ is a derived $p$-adic formal scheme of finite type over $\Spf\BZ_p$.

\begin{conj}   \label{conj:algebraicity}
For each $m\in\BN$ the restriction of this functor to the category of derived schemes over $\BZ/p^m\BZ$ is a quasicompact smooth 
algebraic stack\footnote{Here the words ``algebraic stack'' are understood in the derived sense (since $S$ is allowed to be derived); see the definition of 1-stack in \cite[Def.~5.1.3]{Lu} (which goes back to \cite{TV}).} over $\BZ/p^m\BZ$ with affine diagonal. 
\end{conj}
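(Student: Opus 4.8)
The plan is to realize $\BT_n^G$ as an \emph{open} substack of an unconstrained mapping stack and then deduce all the required properties, imitating the strategy that works for $n=m=1$ in the body of the paper (where, however, the elementary de Rham description replaces the prismatic input).

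\textbf{Step 1: the unconstrained mapping stack.} For $S$ a derived $p$-adic formal scheme of finite type over $\Spf\BZ_p$, let $\cM_n^G(S)$ be the $\infty$-groupoid of \emph{all} $G\{1\}$-torsors on $S^{\Syn}\otimes\BZ/p^n\BZ$, i.e.\ $\cM_n^G(S)=\operatorname{Map}_{B\BG_m}(S^{\Syn}\otimes\BZ/p^n\BZ,\,BG\{1\})$. The first task is to show that, restricted to derived schemes over $\BZ/p^m\BZ$, the functor $\cM_n^G$ is a smooth derived algebraic stack, locally of finite type over $\BZ/p^m\BZ$, with affine diagonal. The inputs: $S^{\Syn}$ is glued from two copies of $S^{\prism}$ along the two open substacks of $S^{\cN}$, and each of $S^{\prism}$, $S^{\cN}$ is a ``formal-algebraic'' quotient stack in the sense of \cite{Prismatization}; the target $BG\{1\}$ is smooth with affine diagonal over $B\BG_m$ because $G$ is smooth affine; and $1$-boundedness of the $\BG_m$-action on $G$ forces the cohomology of $S^{\Syn}\otimes\BZ/p^m\BZ$ with $\ad(\cE)\{1\}$-type coefficients to sit in amplitude $[0,1]$ and to be flat over $\BZ/p^m\BZ$. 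Granting that $S^{\Syn}\otimes\BZ/p^m\BZ$ is cohomologically proper over $\BZ/p^m\BZ$ and controlling the residual formality in the $p$-direction, algebraicity follows from Tannaka duality together with the Artin--Lurie representability criterion; smoothness is exactly the amplitude statement applied to the tangent complex, which at a torsor $\cE$ is controlled by $R\Gamma(S^{\Syn}\otimes\BZ/p^n\BZ,\ad(\cE))[1]$; and affine diagonal holds because $\Aut$ and $\Isom$ of $G\{1\}$-torsors are cut out by $H^0$ of affine group schemes over $S^{\Syn}$.

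\textbf{Step 2: cutting out $\BT_n^G$ and quasicompactness.} The pointwise condition defining $\BT_n^G\subset\cM_n^G$ --- vanishing of $\nu_\alpha\in H^1((\BG_m)_k,G_k)$ for every geometric point $\alpha$, where $\nu_\alpha$ is the class of the restriction of $\cE$ to the Hodge locus $\Spec k\times B\BG_m$ --- should be shown to be an \emph{open} condition. Indeed, by \S\ref{sss:H^1(G_m, G)}, $H^1((\BG_m)_k,G_k)$ is a finite set depending only on the reductive quotient of $G_k$, and along the Hodge-locus family the locus where the restricted torsor lies in the trivial splitting class is open by upper semicontinuity of the relevant cohomology dimensions. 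Thus $\BT_n^G$ is an open substack of $\cM_n^G$ over $\BZ/p^m\BZ$, hence again smooth, algebraic, locally of finite type, with affine diagonal. For quasicompactness one expects the forgetful map from $\BT_n^G$ to the stack of data on the two $S^{\prism}$-charts with their Frobenius gluing to exhibit $\BT_n^G$, up to gerbe-like and affine-bundle corrections (the higher-$n$ analog of Theorem~\ref{t:torsor} and Theorem~\ref{t:2}), over a quasicompact stack of ``$n$-truncated prismatic $G$-displays'' akin to $\Disp_n^G$ of \cite{Lau21}; quasicompactness then descends.

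\textbf{Main obstacle.} The genuinely hard point --- and the reason the statement is only conjectural --- is Step 1: establishing that mapping out of $S^{\Syn}\otimes\BZ/p^m\BZ$ (which is neither a scheme nor a quasicompact algebraic stack, and involves infinite-type affine group schemes) into $BG\{1\}$ produces an algebraic stack, with the sharp amplitude and flatness bounds. For $n=m=1$ this is circumvented entirely by the elementary description in \S\ref{s:the key definition} (connections satisfying the Katz condition); for general $n$ one genuinely needs the foundational theory of the filtered prismatization $S^{\cN}$ and of $n$-truncated prismatic $G$-displays, which rests on the forthcoming \cite{BL3}. Until that is available the required cohomological-properness and structural statements for $S^{\Syn}$ cannot be proved. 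A secondary point, expected to be routine once Step 1 is in place, is checking that $1$-boundedness yields precisely the amplitude $[0,1]$ needed for smoothness --- the analog, in this setting, of the Grothendieck--Illusie smoothness theorem for $\BT_n^{d,d'}$.
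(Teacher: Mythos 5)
The statement you are addressing is Conjecture~\ref{conj:algebraicity}: the paper offers no proof of it, and explicitly says that because it is only a conjecture the definition in \S\ref{sss:BT_n^G via syntomification} should be regarded as preliminary; the only supporting evidence given is the consistency checks in \S\ref{sss:Good news}. Your text is likewise not a proof but a strategy outline, and by your own admission its central step fails at present: the algebraicity, cohomological properness, and amplitude/flatness assertions about mapping out of $S^{\Syn}\otimes\BZ/p^m\BZ$ into $BG\{1\}$ are exactly the foundational statements that are unavailable pending \cite{BL3} (or an independent development of $n$-truncated prismatic $G$-displays). Asserting that ``$1$-boundedness forces the relevant cohomology into amplitude $[0,1]$ and flat over $\BZ/p^m\BZ$'' and that $S^{\Syn}\otimes\BZ/p^m\BZ$ is cohomologically proper is not a reduction of the problem; it is a restatement of the hardest part of the conjecture (the higher-$n$ analog of the Grothendieck--Illusie smoothness theorem), with no argument supplied.

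Two further steps you treat as routine are not. First, openness of the locus where $\nu_\alpha=0$: since $G$ is in general non-abelian, $H^1((\BG_m)_k,G_k)$ is a pointed set of conjugacy classes of splittings (\S\ref{sss:H^1(G_m, G)}), not a coherent cohomology group, so ``upper semicontinuity of cohomology dimensions'' does not apply as stated; the paper only records, without proof, that for connected $S$ one point suffices, which is a constancy statement along $S$, not an openness statement in a family of torsors on the mapping stack. Second, your quasicompactness argument presupposes a map to a quasicompact stack of $n$-truncated prismatic $G$-displays together with a higher-$n$ analog of Theorem~\ref{t:torsor} and Theorem~\ref{t:2}; no such comparison exists in the paper even conjecturally for $n>1$, and already for $n=1$ the relation to Lau's theory is only Conjectures~\ref{conj:1}--\ref{conj:2}. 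So your proposal is a reasonable heuristic roadmap, broadly consistent with the evidence the paper cites, but it does not close, and could not be expected to close, the gap between the definition and the conjectured algebraicity.
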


In particular, the conjecture would imply that the restriction of the functor $\BT_n^G$ to the category of classical schemes over $\BZ/p^m\BZ$ is a smooth algebraic stack in the sense of \cite{LM}.

As far as I understand, Conjecture~\ref{conj:algebraicity} has already been proved in \cite{GMM}.

\subsubsection{Remarks} \label{sss:Good news}
(i) If $S$ is a smooth $\BF_p$-scheme then the definitions of $\BT_1^G(S)$ given in \S\ref{sss:the key definition} and \S\ref{sss:BT_n^G via syntomification} are equivalent, see \cite[Thm.~3.8]{Shen} or Appendix~\ref{s:comparing the definitions of BT_1} below.

(ii) Let $G$ be the group $\BG_a\otimes\BZ/p^n\BZ$ equipped with the usual $\BG_m$-action. Then the triviality condition from \S\ref{sss:BT_n^G via syntomification} is automatic by \S\ref{sss:H^1(G_m, G)}, so $\BT_n^G(S)$ is the $\infty$-groupoid of all $\cO\{ 1\}$-torsors on $S^{\Syn}\otimes\BZ/p^n\BZ$. Bhatt and Lurie proved\footnote{Theorem~7.5.6 of \cite{BL} identifies the $p$-adic completion of $R\Gamma (S_{\et}, \cO_S^\times )[-1]$
with a certain complex $R\Gamma_{\syn}(S,\BZ_p(1))$, whose definition does not involve $S^{\Syn}$. If $S$ is quasisyntomic then it is known that $R\Gamma_{\syn}(S,\BZ_p(1))=R\Gamma (S^{\Syn},\cO\{ 1\})$.} that 
\begin{equation}   \label{e:cohomology of O{1}}
R\Gamma (S^{\Syn}\otimes\BZ/p^n\BZ,\cO\{ 1\})=R\Gamma (S_{\et}, \Cone (\cO_S^\times\overset{p^n}\longrightarrow\cO_S^\times)[-1]).
\end{equation}
if $S$ is quasisyntomic. Moreover, \eqref{e:cohomology of O{1}} is expected to hold\footnote{Again, the problem is to show that $R\Gamma_{\syn}(S,\BZ_p(1))=R\Gamma (S^{\Syn},\cO\{ 1\})$.} for \emph{any} $p$-nilpotent derived scheme of finite type over $\Spf\BZ_p$ (similarly to \cite[Prop.~8.16]{BL2}).
If so then $\BT_n^G$ identifies with the classifying stack of~$\mu_{p^n}$. This agrees with Conjecture~\ref{conj:algebraicity}; in the case $n=m=1$ this also agrees with \S\ref{sss:G=G_a}.

\section{Equivalence between the two definitions of $\BT_1^G(S)$}  \label{s:comparing the definitions of BT_1}
Let $G$ be a smooth affine group scheme over $\BF_p$ equipped with an action of $\BG_m$. Given a smooth $\BF_p$-scheme $S$, we have two definitions of the groupoid $\BT_1^G(S)$: an elementary one (see \S\ref{sss:the key definition}) and a definition via the stack
$S^{\Syn}$ (see \S\ref{sss:BT_n^G via syntomification}); both definitions make sense without assuming that the $\BG_m$-action is 1-bounded, see \S\ref{sss:without 1-boundedness} and \S\ref{sss:H^1(G_m, G)}(iii).
In this Appendix we sketch a proof of the equivalence between the two definitions.

\subsection{Recollections on $S^{\Syn}\otimes \BF_p$}
We will follow \cite[Ch.~2]{B}.
\subsubsection{}   \label{sss:from Nygaardization to syntomification}
As mentioned in \S\ref{sss:S^Syn}, $S^{\Syn}$ is obtained from $S^\cN$ by gluing together the two open substacks canonically isomorphic $S^\prism$. One has $S^\prism\otimes\BF_p=S^{\dR}:=S^{\dR /\BF_p}$.
Following \S 2.8 of~\cite{B}, we use the notation $S^C:=S^\cN\otimes\BF_p$. The stack $S^C$ contains two open substacks canonically isomorphic to $S^{\dR}$, and after gluing them together, one gets $S^{\Syn}\otimes\BF_p$.

Let us now recall the material on $S^C$ from \cite[Ch.~2]{B}.

\subsubsection{}    \label{sss:C}
Just as in Construction~2.8.2 from \cite{B}, let $C$ denote the quotient of the coordinate cross $\Spec\BF_p[u,t]/(ut)$ by the hyperbolic action of $\BG_m$.
One has $(\Spec\BF_p)^C=C$. The two irreducible components of $C$ are denoted by $C_{u=0}$ and $C_{t=0}$.

\subsubsection{}   \label{sss:the 2 components}
The preimage of $C_{t=0}$ (resp.~$C_{u=0}$) in $S^C$ is denoted in \cite{B} by $S^{\dR,c}$ (resp.~$S^{\dR,+}$); the superscript $c$ stands for ``conjugate filtration''.
Let $S^{\Hodge}:=S^{\dR,c}\cap S^{\dR,+}$.

In \S\ref{sss:from Nygaardization to syntomification} we mentioned two open substacks of $S^C$ canonically isomorphic $S^{\dR}$; these substacks are 
$S^{\dR,c}\setminus S^{\Hodge}$ and $S^{\dR,+}\setminus S^{\Hodge}$.

\subsubsection{}   \label{sss:pushout diagram}
By \S\ref{sss:from Nygaardization to syntomification} and \S\ref{sss:the 2 components}, we have a pushout diagram of stacks
\begin{equation}   \label{e:pushout diagram}
\xymatrix{
S^{\Syn}\otimes\BF_p& S^{\dR,+}\ar[l]\\
S^{\dR,c}\ar[u]\ & S^{\dR}\sqcup S^{\Hodge}\ar[l]\ar[u]
}
\end{equation}
Chapter 2 of \cite{B} contains a rather explicit description of the whole diagram \eqref{e:pushout diagram} via the procedure of \emph{transmutation} (see \cite[Rem.~2.3.8]{B}).
The key point is that it suffices to describe \eqref{e:pushout diagram} in the particular case $S=\BG_a$ \emph{as a diagram of ring stacks}. For such a description, see  \cite[2.8.3]{B} and references therein, as well as
\cite[2.5.1]{B}, \cite[2.7.8]{B}, and   \cite[Prop.~2.7.12]{B}. We paraphrase this description in \S\ref{ss:paraphrase}.

\subsection{$\BT_1^G(S)$ as a fiber product}
Let $\BT_1^G(S)$ denote the groupoid defined in \S\ref{sss:BT_n^G via syntomification}.
Thus $\BT_1^G(S)$ is the groupoid of $G_{\cO\{ 1\}}$-torsors $\cE$ on $S^{\Syn}\otimes\BF_p$ such that for every geometric point $\alpha :\Spec k\to S$, the pullback of $\cE$ via a certain morphism
$f_\alpha :\Spec k\times B\BG_m\to S^{\Syn}\otimes\BF_p$ is trivial. Let us note that $f_\alpha$ factors as
\begin{equation}   \label{e:factorizing f_alpha}
\Spec k\times B\BG_m\to S^{\Hodge}\to S^{\Syn}\otimes\BF_p,
\end{equation}
see the last paragraph of \S\ref{sss:S times BG_m to S^N}

Diagram \eqref{sss:pushout diagram} yields a pullback diagram of groupoids
\begin{equation}   \label{e:pullback diagram}
\xymatrix{
\BT_1^G(S)\ar[r] \ar[d] & \sX^{\dR,+}(S)\ar[d]\\
\sX^{\dR,c}(S)\ar[r] & \sX^{\dR}(S)\times \sX^{\Hodge}(S)
}
\end{equation}
where $\sX^{\dR}(S)$ is the groupoid of $G$-torsors\footnote{A $G_{\cO\{ 1\}}$-torsor on $S^{dR}$ is the same as a $G$-torsor because $(\Spec\BF_p)^{dR}=\Spec\BF_p$.} on $S^{dR}$ and 
$\sX^{\dR,+}(S)$ (resp.~$\sX^{\dR,c}(S)$ or $\sX^{\Hodge}(S)$) is the groupoid of $G_{\cO\{ 1\}}$-torsors on $S^{\dR,+}$ (resp.~$S^{\dR,c}$ or $S^{\Hodge}$) satisfying the triviality condition\footnote{The formulation of this condition uses the factorization \eqref{e:factorizing f_alpha}.} similar to the one from the definition of $\BT_1^G(S)$.

In the next subsection we will translate the diagram
\begin{equation}   \label{e:part of pullback diagram}
\sX^{\dR,c}(S)\to \sX^{\dR}(S)\times \sX^{\Hodge}(S)\leftarrow\sX^{\dR,+}(S)
\end{equation}
into an elementary language. The isomorphism between $\BT_1^G(S)$ and the groupoid from  \S\ref{sss:the key definition} 
will immediately follow from this description because diagram \eqref{e:pullback diagram} is Cartesian.

\subsection{Diagram \eqref{e:part of pullback diagram} in elementary terms}
Let $\fg:=\Lie (G)$. 
We will use the subgroups $M,P^\pm\subset G$ defined in \S\ref{ss:parabolics,Levi} and the grading $\fg=\bigoplus\limits_i\fg_i$ corresponding to the $\BG_m$-action.
 
\subsubsection{}
$\sX (S)$ is the groupoid of $G$-torsors on $S$ equipped with a nilpotent integrable connection.

\subsubsection{}   \label{sss:X Hodge}
$\sX^{\Hodge}(S)$ is the groupoid of pairs $(\cF_M ,\sigma)$, where $\cF_M$ is an $M$-torsor on $S$ and $\sigma\in H^0(S,(\fg_1)_{\cF_M}\otimes\Omega^1_S)$,
see \cite[Rem.~2.5.9]{B}.

\subsubsection{}   \label{sss:X dR+}
$\sX^{\dR,+}(S)$ is the groupoid of $P^-$-torsors $\cF^-$ on $S$ equipped with a nilpotent integrable connection $\nabla$ on the corresponding $G$-bundle $\cF^-_G$ satisfying the Griffiths transversality condition
$\KS_\nabla\in H^0(S,(\fg_1)_{\cF^-}\otimes\Omega^1_S)\subset H^0(S,(\fg/\fg_{\le 0})_{\cF^-}\otimes\Omega^1_S)$ (we are using the notation from \S\ref{sss:defining KS}).
See \cite[Rem.~2.5.8]{B}.

\subsubsection{}   \label{sss:X dR,c}
$\sX^{\dR,c}(S)$ is the groupoid of $P^+$-torsors $\cF^+$ on $S$ equipped with an integrable connection $\nabla$ inducing a $p$-integrable connection on the $M$-torsor $\cF^+_M$ corresponding to $\cF^+$.
See \cite[Rem.~2.7.10]{B}.

\subsubsection{}  \label{sss:interesting functor}
The functor $\sX^{\dR,c}(S)\to\sX^{\Hodge}(S)$ from diagram \eqref{e:part of pullback diagram} is as follows:

(i) the $M$-torsor $\cF_M$ from \S\ref{sss:X Hodge} is the $\Fr_S$-descent of the $M$-torsor $\cF^+_M$ from \S\ref{sss:X dR,c} via the $p$-integrable connection;

(ii) $\sigma\in H^0(S,(\fg_1)_{\cF_M}\otimes\Omega^1_S)$ is the image of $-\pCurv_\nabla\in H^0(S,(\fg_{\ge 1})_{\cF_M}\otimes\Omega^1_S)$.

The other functors from diagram \eqref{e:part of pullback diagram} are self-explanatory.

\subsubsection{Origin of the sign}  
The ``minus'' sign in \S\ref{sss:interesting functor}(ii) comes from the ``minus'' sign in formula~\eqref{e:-V} below.

\subsection{The ring stack $\BG_a^{\Syn}\otimes\BF_p$}    \label{ss:paraphrase}
Let $\BG_a$ denote the additive group over $\BF_p$. In this subsection we recall the description of $\BG_a^{\Syn}\otimes\BF_p$ given in \cite[Ch.~2]{B} and explain the
origin of the ``minus'' sign in \S\ref{sss:interesting functor}(ii).

\subsubsection{Plan}
$\BG_a$ is a ring scheme, so $\BG_a^{\Syn}\otimes\BF_p$ is a ring stack over $(\Spec\BF_p)^{\Syn}\otimes\BF_p$. One gets $(\Spec\BF_p)^{\Syn}\otimes\BF_p$ from the
stack $C$ considered in \S\ref{sss:C} by gluing together the two open points of $C$. Equivalently, $(\Spec\BF_p)^{\Syn}\otimes\BF_p$ is obtained from $\BP^1/\BG_m$ by gluing
$\{ 0\}/\BG_m$ with $\{ \infty\}/\BG_m$.

Let $\sR$ be the pullback of $\BG_a^{\Syn}\otimes\BF_p$ via the map $\BP^1\to(\Spec\BF_p)^{\Syn}\otimes\BF_p$. To describe $\BG_a^{\Syn}\otimes\BF_p$, we will describe in \S\ref{sss:describing sR}-\ref{sss:identifying the fibers over 0 and infty}
the $\BG_m$-equivariant ring stack $\sR$ over $\BP^1$ and the isomorphism $\sR_0\iso\sR_\infty$, where $\sR_0,\sR_\infty$ are the fibers of $\sR$ over $0,\infty\in\BP^1$.

\subsubsection{The ring stack $\sR$}  \label{sss:describing sR}
Let $(\BG_a)_{\BP^1}=\BG_a\times\BP^1$; this is a ring scheme over $\BP^1$. One has 
$$\sR =\Cone (G\overset{d}\longrightarrow (\BG_a)_{\BP^1}),$$
where $(G,d)$ is a quasi-ideal\footnote{For the language of quasi-ideals and cones, see \cite[\S 1.3.3-1.3.4]{Prismatization}.} in $(\BG_a)_{\BP^1}$, which we are going to define.

Let $\BG_a^\sharp$ denote the the PD-hull of zero in $\BG_a$. Then $\BG_a^\sharp$ is a $\BG_a$-module, and the natural map $f:\BG_a^\sharp\to\BG_a$ is a $\BG_a$-module homomorphism such that
$$\im f=\alpha_p:=\Ker (\Fr :\BG_a\to\BG_a).$$
One defines $G$ to be a certain $\BG_a$-submodule of $\BG_a^\sharp\times\BG_a\times\BP^1$ and $d:G\to\BG_a\times\BP^1$ to be the projection. The equations defining $G\subset\BG_a^\sharp\times\BG_a\times\BP^1$ are as follows:
\begin{equation}  \label{e:uy=f(x)}
uy=f(x),
\end{equation}
\begin{equation}   \label{e:Fr(y)=0}
y^p=0,
\end{equation}
where $x\in\BG_a^\sharp$, $y\in\BG_a$, $u\in\BP^1$. Strictly speaking, \eqref{e:uy=f(x)} means that
\[
uy=f(x) \mbox{ if }u\ne\infty, \quad y=u^{-1}f(x) \mbox{ if }u\ne 0.
\]
Note that if $u\ne 0$ then \eqref{e:Fr(y)=0} follows from \eqref{e:uy=f(x)}.

Finally, the action of $\lambda\in\BG_m$ on $G$ and $\BG_a\times\BP^1$ is given by $\tilde x=\lambda^{-1} x$, $\tilde u=\lambda^{-1} u$, $\tilde y=y$.

\subsubsection{The isomorphism $\sR_0\iso\sR_\infty$}   \label{sss:identifying the fibers over 0 and infty}
Let $K:=\Ker (\BG_a^\sharp\overset{f}\longrightarrow\BG_a)$. Then
$$ \quad \sR_0=\Cone (K\oplus\alpha_p\overset{(0,1)}\longrightarrow\BG_a)=\Cone (K\overset{0}\longrightarrow\BG_a/\alpha_p)=\Cone (K\overset{0}\longrightarrow\BG_a)$$
(we have used the ring isomorphism $\BG_a/\alpha_p\iso\BG_a$ induced by $\Fr:\BG_a\to\BG_a$). On the other hand,
$$\sR_\infty=\Cone (\BG_a^\sharp\overset{0}\longrightarrow\BG_a).$$
The isomorphism $\sR_0\iso\sR_\infty$ comes from the isomorphism
\begin{equation}    \label{e:-V} 
-V:\BG_a^\sharp\iso K,
\end{equation}
where $V$ is the Verschiebung of $\BG_a^\sharp$. 

\subsubsection{Origin of formula \eqref{e:-V}}  \label{sss:Origin}
One gets \eqref{e:-V} by comparing \cite[\S 2.8.3]{B} with \cite[\S 2.8.1]{B} and \cite[Prop.~ 2.7.12]{B}.
The ``minus'' sign in \eqref{e:-V} comes from diagram (2.7.5) of \cite{B}.

\bibliographystyle{alpha}

\end{document}